\newcommand{\R}{\mathbb R}
\newcommand{\ls}{\leqslant}
\newcommand{\gs}{\geqslant}
\renewcommand{\div}{\operatorname{div}}
\newcommand{\dist}{\operatorname{dist}}
\newcommand{\per}{\operatorname{Per}}
\newcommand{\sign}{\operatorname{sign}}
\newcommand{\supp}{\operatorname{Supp}}
\newcommand{\conv}{\operatorname{Conv}}
\newcommand{\BV}{\operatorname{BV}}
\newcommand{\TV}{\operatorname{TV}}
\DeclareMathOperator*{\argmin}{arg\,min}
\newcommand{\eps}{\varepsilon}
\newcommand{\loc}{\mathrm{loc}}
\newcommand{\dd}{\, \mathrm{d}}
\newcommand{\Br}{B(x_0,r)}
\newcommand{\eas}{E_\alpha^s}
\newcommand{\wksto}{\stackrel{\ast}{\rightharpoonup}}
\newcommand\restr[2]{{
\left.\kern-\nulldelimiterspace #1 \vphantom{\big|} \right|_{#2} 
}}
\renewcommand{\dh}{d_{\mathcal H}}
\newcommand{\mres}{\mathbin{\vrule height 1.6ex depth 0pt width
0.13ex\vrule height 0.13ex depth 0pt width 1.3ex}}
\DeclareRobustCommand{\rchi}{{\mathpalette\irchi\relax}}
\newcommand{\irchi}[2]{\raisebox{\depth}{$#1\chi$}}
\numberwithin{equation}{section}
\newtheorem{theorem}{Theorem}[section]
\newtheorem{proposition}[theorem]{Proposition}
\newtheorem{corollary}[theorem]{Corollary}
\newtheorem{lemma}[theorem]{Lemma}
\theoremstyle{definition}
\newtheorem{definition}[theorem]{Definition}
\newtheorem{remark}[theorem]{Remark}
\newtheorem{example}[theorem]{Example}
\newtheorem{assumption}[theorem]{Assumption}
\begin{document}
\title{Convergence of level sets in total variation denoising through variational curvatures in unbounded domains\footnotetext{2020 Mathematics Subject Classification: 49Q20, 53A10, 68U10, 49Q05.}}
\author{Jos\'e A. Iglesias\thanks{Johann Radon Institute for Computational and Applied Mathematics (RICAM), Austrian Academy of Sciences, Linz, Austria (\texttt{jose.iglesias{@}ricam.oeaw.ac.at})} , Gwenael Mercier\thanks{Faculty of Mathematics, University of Vienna, Austria (\texttt{gwenael.mercier{@}univie.ac.at})}}
\date{}
\maketitle

\begin{abstract}
We present some results of geometric convergence of level sets for solutions of total variation denoising as the regularization parameter tends to zero. The common feature among them is that they make use of explicit constructions of variational mean curvatures for general sets of finite perimeter. Consequently, no additional regularity of the level sets of the ideal data is assumed, and in particular the subgradient of the total variation at it could be empty. In exchange, other restrictions on the data or on the noise are required. We consider two cases: characteristic functions with a parameter choice depending on the noise level, and noiseless generic data.
\end{abstract}

\section{Introduction and main results}
We aim to provide a precise analysis of the generalized Rudin-Osher-Fatemi denoising scheme based on total variation minimization in the low noise regime, in general dimension and with no source condition assumptions. More precisely, given a real function $\psi:\R\to \R$, some ideal data to be recovered $f:\R^d \to \R$ with compact support, an additive perturbation $w$, as well as a regularization parameter $\alpha >0$, we consider minimizers of 
\begin{equation}
\label{eq:ROFpsi}
 \inf_{u \in \BV(\R^d)} \int_{\R^d} \psi(u-f-w) + \alpha \TV(u).
\end{equation}
We make the following assumptions on the function $\psi$ appearing in the data term and its Fenchel conjugate $\psi^\ast$:
\begin{equation}
\label{eq:psiassum}\begin{gathered} \psi \text{ is a strictly convex and even function, with}\ \psi(0)=0,\\
\psi^\ast \text{ is uniformly convex and }\left|\psi(s)\right|\ls C|s|^{d/(d-1)} \text{ for some }C>0.\tag{A}\end{gathered}
\end{equation}
If $1 < p \ls 2$ the functions $t \mapsto |t|^p/p$ satisfy these convexity properties \cite[Example 5.3.10]{BorVan10}, so the case $p=d/(d-1)$ satisfies all the conditions of Assumption \eqref{eq:psiassum}.
\begin{remark}Since $\psi$ is strictly convex, even and vanishes at zero, we have that $\psi(t) > 0$ for $t\neq 0$. Moreover, $\psi^\ast$ being uniformly convex implies that $\psi$ is differentiable with $\psi'$ uniformly continuous \cite[Thm.~5.3.17, Prop.~4.2.14]{BorVan10}, in particular $\psi \in\mathcal{C}^1(\R)$. Moreover, strict convexity of $\psi$ implies that $\psi^\ast$ is also differentiable \cite[Thm.~5.3.7]{BorVan10}. We will use both of these properties in the sequel.
\end{remark}

We study the regime in which $\alpha$ and $w$ tend to zero simultaneously, for which under natural assumptions it is easy to prove (see Proposition \ref{prop:existconv} below) that the unique minimizers $u_{\alpha, w}$ of \eqref{eq:ROFpsi} converge to $f$ in the strong $L^1_\loc$ topology. In particular, if along a sequence the solutions $u_{\alpha_n,w_n}$ have a common compact support, we have, using Fubini's theorem \cite[Thm.~2]{IglMerSch18}, for a.e.~$s>0$ that
\begin{equation}\label{eq:convlsmeas}\big|\{u_{\alpha_n,w_n} > s\} \Delta \{f > s\}\big|\to 0.\end{equation}
Moreover, this can in some cases be improved to convergence in Hausdorff distance, which can be interpreted as geometric uniform convergence. This type of convergence has been proved in \cite{ChaDuvPeyPoo17} for classical ROF denoising in the plane, and in \cite{IglMerSch18, IglMer20} for linear inverse problems, bounded domains, Banach space measurements and general dimensions. All of these results (with the exception of when explicit dual certificates for $u_{\alpha_n,0}$ are known \cite[Sec.~8]{ChaDuvPeyPoo17}) assume a source condition implying $\partial \TV(f) \neq \emptyset$ when $\TV$ is considered as a functional on $L^{d/(d-1)}(\R^d)$. On the one hand this source condition guarantees, in particular, that the level sets of the minimizers $u_{\alpha,w}$ satisfy uniform density estimates independent of $\alpha$ and $w$, as long as these are related through an adequate parameter choice. On the other, this subgradient condition excludes cases of interest where geometric convergence is still expected, like the case when $f$ is the indicatrix of a planar polygon \cite[Sec.~3.3]{ChaDuvPeyPoo17}.

Our main goal is to obtain this improved mode of convergence while assuming as little regularity of $\{ f > s \}$ as possible, and this is achieved in two different situations. The first is when $f$ is the characteristic function of a bounded finite perimeter set, and admitting noisy measurements with a natural parameter choice. The second concerns a generic class of $\BV$ functions in which ``flat regions are controlled'' and including piecewise constant functions, but with noiseless measurements. The techniques used have as a central point the variational mean curvatures for general finite perimeter sets introduced in \cite{BarGonTam87, Bar94} which, through comparison arguments, are used as a lower integrability replacement for the missing dual certificates for $f$.

\subsection{Main results}

\begin{theorem}\label{thm:localhausdorff}
Assume that $f=1_D$, the indicatrix of a bounded finite perimeter set $D \subset \R^d$, and that the sequences $\alpha_n \to 0$ and $w_n$ are such that there is some constant $C_{\psi, d}$ for which
\begin{equation}\label{eq:hardparamchoice}\frac{\|w_n\|_{L^{d/(d-1)}(\R^d)}}{\alpha_n} \ls C_{\psi, d} < \frac{m_{\psi^\ast}\left(\Theta_d\right)}{\Theta_d},\end{equation}
where $\Theta_d$ denotes the isoperimetric constant in $\R^d$ and $m_{\psi^\ast}$ is the largest modulus of uniform convexity (see Definition \ref{def:unifconv} below) for $\psi^\ast$. Then we have, up to a not relabelled subsequence, the convergence in Hausdorff distance
\begin{equation*}\dh(\partial \{u_{\alpha_n,w_n} > s\}, \partial D) \to 0 \text{ for a.e.~}s \in (0,1).\end{equation*}
\end{theorem}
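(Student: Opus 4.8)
The plan is to obtain the Hausdorff convergence from two ingredients --- convergence of the level sets in measure, which is essentially already available, and density estimates for them that are uniform in $n$ --- with the variational mean curvature of $D$ entering as a substitute for the dual certificate that is missing here.

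\textbf{Step 1 (reductions).} By Proposition~\ref{prop:existconv} the minimizers $u_n:=u_{\alpha_n,w_n}$ converge to $1_D$ in $L^1_{\loc}(\R^d)$; comparing $u_n$ with the competitor $0$ and using the growth of $\psi$ gives, along a subsequence, a uniform bound on their supports, so the convergence is in $L^1(\R^d)$ and \eqref{eq:convlsmeas} yields $|E_n^s\,\Delta\,D|\to0$ for a.e.\ $s\in(0,1)$, where $E_n^s:=\{u_n>s\}$. Writing the fidelity through the layer-cake identity $\psi(u-g)=\psi(-g)+\int_0^u\psi'(\tau-g)\dd\tau$ (recall $\psi\in\mathcal C^1$) with $g=1_D+w_n$, together with the coarea formula for $\TV$, one sees that for a.e.\ $s$ the set $E_n^s$ minimizes
\begin{equation*}
F\ \longmapsto\ \per(F)+\frac1{\alpha_n}\int_F\psi'(s-1_D-w_n)\dd x
\end{equation*}
over finite-perimeter sets of finite measure; the integrand lies in $L^d(\R^d)$ because $|\psi'(t)|\lesssim 1+|t|^{1/(d-1)}$ (from $\psi(t)\le C|t|^{d/(d-1)}$ and convexity) and $w_n\in L^{d/(d-1)}(\R^d)$, so the volume term makes sense.

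\textbf{Step 2 (uniform density estimates --- the crux).} I claim there are $c,r_0>0$, independent of $n$ and locally uniform in $s\in(0,1)$, such that every $x_0\in\partial E_n^s$ satisfies $|E_n^s\cap B(x_0,r)|\ge c\,r^d$ and $|B(x_0,r)\setminus E_n^s|\ge c\,r^d$ for $r\le r_0$. I would test the minimality of $E_n^s$ against $E_n^s\cup B(x_0,r)$ and $E_n^s\setminus B(x_0,r)$ and estimate the resulting change of the volume term. The forcing $\psi'(s-1_D-w_n)$ is $\le 0$ on $\{1_D=1\}$ except on $\{w_n<s-1\}$ and $\ge 0$ on $\{1_D=0\}$ except on $\{w_n>s\}$, so its ``wrong-sign'' part is carried by $\{|w_n|\gtrsim\min(s,1-s)\}$, a set of measure $O(\|w_n\|_{L^{d/(d-1)}}^{d/(d-1)})$; using $|\psi'(s-1_D-w_n)|\lesssim 1+|w_n|^{1/(d-1)}$ and Hölder's inequality its integral there is $O(\|w_n\|_{L^{d/(d-1)}}^{d/(d-1)})=o(\alpha_n)$ under \eqref{eq:hardparamchoice}, hence contributes $o(1)$ to the $\Lambda$-minimality of $E_n^s$ and is harmless. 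The obstruction is the correctly-signed but $O(1/\alpha_n)$ part of the forcing; here one compares $E_n^s$ simultaneously with $D$ itself, using that by \cite{BarGonTam87,Bar94} there is $H_D\in L^1(\R^d)$ with $D$ a minimizer of $F\mapsto\per(F)-\int_F H_D$: this turns $\tfrac1{\alpha_n}\psi'(s-1_D-w_n)+H_D$ into the effective forcing to be balanced against perimeter, the singular parts cancelling up to a noise term. The threshold $C_{\psi,d}<m_{\psi^\ast}(\Theta_d)/\Theta_d$ is precisely what guarantees that this residual cannot overcome the cost of detaching a component: in a ball carrying a perturbation, the regularisation gain is $\gtrsim\alpha_n\Theta_d(\mathrm{vol})^{-1/d}$ by the isoperimetric inequality, while the amplitude that the noise transmits through $\psi'$ is controlled by the modulus $m_{\psi^\ast}$, and \eqref{eq:hardparamchoice} makes the former always dominate. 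Inserting these bounds into the standard isoperimetric differential inequality (with sharp constant $\Theta_d$, and using $x_0\in\partial E_n^s$ so that the relevant volume function is positive) gives the density estimates with a genuinely $n$-independent $r_0$; making the balance above quantitative, with constants that do not degenerate, is where I expect the real difficulty to lie.

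\textbf{Step 3 (conclusion).} Take the measure-theoretic representatives of $D$ and of each $E_n^s$, for which the topological boundary equals the support of the Gauss--Green measure, i.e.\ $x\in\partial D$ iff $0<|D\cap B(x,\rho)|<|B(x,\rho)|$ for all $\rho>0$ (and likewise for $E_n^s$). If some $x_0\in\partial D$ had $B(x_0,\delta)\cap\partial E_n^s=\emptyset$, then up to a null set $E_n^s$ coincides with $B(x_0,\delta)$ or with its complement on that ball, so $|E_n^s\,\Delta\,D|\ge\min\{|D\cap B(x_0,\delta)|,|B(x_0,\delta)\setminus D|\}\ge\mu(\delta)$, with $\mu(\delta):=\inf_{x\in\partial D}\min\{|D\cap B(x,\delta)|,|B(x,\delta)\setminus D|\}>0$ by continuity in $x$ and compactness of $\partial D$; this contradicts $|E_n^s\,\Delta\,D|\to0$ for large $n$, so $\partial D\subseteq(\partial E_n^s)_\delta$. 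Conversely, if $x_0\in\partial E_n^s$ had $\dist(x_0,\partial D)\ge\delta$ then $B(x_0,\delta)$ lies entirely in the measure-theoretic interior or exterior of $D$, and Step~2 (with $\delta\le r_0$) gives $|E_n^s\,\Delta\,D|\ge |B(x_0,\delta)\setminus E_n^s|\ge c\,\delta^d$ or $|E_n^s\,\Delta\,D|\ge|E_n^s\cap B(x_0,\delta)|\ge c\,\delta^d$ respectively, again impossible for large $n$; so $\partial E_n^s\subseteq(\partial D)_\delta$. Since the $\partial E_n^s$ stay in a fixed compact set, the two inclusions give $\dh(\partial E_n^s,\partial D)\to0$ for a.e.\ $s\in(0,1)$ along the chosen subsequence.
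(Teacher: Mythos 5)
Your Step 3 is sound and in fact neatly avoids the weak-$\ast$ Gauss--Green argument the paper borrows from \cite{ChaDuvPeyPoo17} for the inclusion $\partial D\subseteq(\partial E_n^s)_\delta$: your $\mu(\delta)>0$ argument (continuity of $x\mapsto\min\{|D\cap B(x,\delta)|,|B(x,\delta)\setminus D|\}$ plus compactness of $\partial D$ and the choice of representative) is a valid and arguably more elementary route. But Step 2 contains the real gap, and it is twofold.

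First, what you claim to prove there --- density estimates with constants $c,r_0$ independent of $n$ and of the position $x_0$ --- is \emph{too strong} and is not what the theorem needs, nor is it true in this generality. Section~\ref{sec:uniform} of the paper is devoted precisely to the question of when such genuinely uniform estimates can be recovered without a source condition, and the answer given there is: only under additional structural hypotheses on $D$ (Assumption~\ref{ass:curvineq}, verified e.g.\ for the square). For a general bounded finite perimeter set $D$, the estimates that are actually obtained (Appendix~\ref{sec:densityest}, Proposition~\ref{prop:densityest}) hold at a scale $r_K$ that \emph{degenerates as the compact set $K=\{\dist(\cdot,\partial D)\ge\delta\}$ approaches $\partial D$}, because they hinge on the $L^\infty$ bound $\|v_{\alpha,0}\|_{L^\infty(K_\delta)}\le d/\delta$ and not on a fixed-scale bound. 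The good news is that your Step~3 only ever invokes the density estimates at a point $x_0$ with $\dist(x_0,\partial D)\ge\delta$ and at scale $\delta$, so the degenerate, distance-dependent version is exactly enough; but you should state that version, not the uniform one.

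Second, and more seriously, the mechanism you sketch to get the estimates does not work as described. You propose adding $H_D$ (a variational curvature of $D$) to the forcing $\tfrac1{\alpha_n}\psi'(s-1_D-w_n)$ ``with the singular parts cancelling''; there is no such cancellation, since the forcing is of order $1/\alpha_n$ while $H_D$ is a fixed $L^1$ function. What the paper actually proves, and what makes the whole argument close, is the pointwise comparison $|\kappa_{\alpha}|\le|\kappa_D|$ for the \emph{noiseless} dual variable (Proposition~\ref{prop:curvgodown}), obtained via the obstacle-type problems of Lemma~\ref{lem:inclusions}, combined with the $L^\infty$ bound $|\kappa_D|\le d/\dist(\cdot,\partial D)$ (Lemma~\ref{lem:ballinc}). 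This gives the local equiintegrability \eqref{eq:equiint} of $v_{\alpha,0}$ away from $\partial D$. The effect of the noise is then controlled \emph{not} through $\psi'$ acting directly on $w_n$, but through the dual stability estimate $\|v_{\alpha,w}-v_{\alpha,0}\|_{L^d}\le\sigma_\psi(\|w\|_{L^{d/(d-1)}}/\alpha)$ of Proposition~\ref{prop:dualstability}, which is where the threshold $C_{\psi,d}<m_{\psi^\ast}(\Theta_d)/\Theta_d$ enters (ensuring $\|v_{\alpha,w}-v_{\alpha,0}\|_{L^d}<\Theta_d$ so the isoperimetric term still dominates). Your heuristic ``noise has small measure support'' argument is not how the estimate is actually controlled and would not produce a quantitative bound of the right form.

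Finally, the uniform support bound in Step 1 is not established by the competitor-$0$ comparison alone: that only bounds $\TV(u_n)$. In the paper this requires Proposition~\ref{prop:compsupp}, which uses the density estimates outside a fixed compact set plus the dual stability bound to show the level sets have uniformly bounded diameter. Without it, $|E_n^s\Delta D|\to0$ and the compactness of $\limsup\partial E_n^s$ used at the end of Step 3 are not justified.
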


\begin{theorem}\label{thm:generichausdorff}
Let $f \in \BV(\R^d)$ with bounded support. Denote $E_\alpha^s = \{u_{\alpha,0} > s\}$ and $G^s = \{f > s\}$ if $s>0$, and $E_\alpha^s = \{u_{\alpha,0} < s\}$ and $G^s = \{f < s\}$ for $s <0$. Assume that $s$ satisfies that $|E_\alpha^s \Delta G^s| \to 0$, and also
\begin{equation}\label{eq:assumhausdorff} \lim_{\nu \to 0} \dh\left(G^s, G^{s+\nu}\right) = 0, \text{ and }\lim_{\nu \to 0} \dh\left(\R^d \setminus G^s, \R^d \setminus G^{s+\nu}\right) = 0.\end{equation}
Then in the absence of noise ($w=0$) we have the convergence $\dh(\partial E_\alpha^s, \partial G^s) \to 0$.
\end{theorem}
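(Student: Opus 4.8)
The plan is to prove separately the two inclusions making up $\dh(\partial \eas,\partial G^s)\to 0$: that $\partial \eas$ is eventually contained in a shrinking neighbourhood of $\partial G^s$, and conversely that $\partial G^s$ is eventually contained in a shrinking neighbourhood of $\partial \eas$. I take $\partial \eas$ and $\partial G^s$ to be $\supp(D 1_{\eas})$ and $\supp(D 1_{G^s})$, which for a.e.\ $s$ are the topological boundaries of the good representatives. The only input about $u_{\alpha,0}$ I will use is the thresholding characterization of the minimizers of \eqref{eq:ROFpsi} with $w=0$: for a.e.\ $s$, the set $\eas$ minimizes among finite-perimeter sets the prescribed-curvature functional $F\mapsto\per(F)+\int_F g_\alpha$, where $g_\alpha=\tfrac1\alpha\psi'(s-f)$ for $s>0$ and $g_\alpha=\tfrac1\alpha\psi'(f-s)$ for $s<0$. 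The crucial feature of the noiseless case is that, although $g_\alpha$ is of order $1/\alpha$, \emph{the sign of $g_\alpha$ is independent of $\alpha$}: since by Assumption \eqref{eq:psiassum} the derivative $\psi'$ is odd, increasing and vanishes at $0$, one has $g_\alpha\leq0$ a.e.\ on $G^s$ and $g_\alpha\geq0$ a.e.\ on $\R^d\setminus G^s$.

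For the first inclusion, I would argue by contradiction: fix $x\in\partial\eas$ and suppose $\dist(x,\partial G^s)\geq\delta$. Then $B(x,\delta)$ does not meet $\supp(D 1_{G^s})$, so by connectedness $1_{G^s}$ is a.e.\ constant on $B(x,\delta)$; equivalently either $f>s$ a.e.\ or $f\leq s$ a.e.\ on that ball, so there $g_\alpha$ has a fixed sign. If $f>s$ a.e.\ on $B(x,\delta)$ then $g_\alpha\leq 0$ there, so for $r<\delta$ the competitor $\eas\cup B(x,r)$ only lowers the volume term; combining the resulting perimeter inequality with the cut relation $\per(\eas\cup B(x,r))=\per(\eas;\R^d\setminus\overline{B(x,r)})+\mathcal{H}^{d-1}(\partial B(x,r)\cap(\eas)^{(0)})$ and the isoperimetric inequality, a Gronwall-type argument yields the scale-free bound $|B(x,r)\setminus\eas|\geq c_d r^d$. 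But $B(x,\delta)\setminus G^s$ is Lebesgue-null, so $|B(x,\delta)\setminus\eas|\leq|\eas\Delta G^s|\to 0$, contradicting the previous bound once $\alpha$ is small. The case $f\leq s$ a.e.\ on $B(x,\delta)$ is symmetric, using $\eas\setminus B(x,r)$ to obtain $|B(x,r)\cap\eas|\geq c_d r^d$ against $|\eas\cap B(x,\delta)|\leq|\eas\Delta G^s|\to 0$. Since $\delta>0$ was arbitrary, the first inclusion follows.

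For the reverse inclusion, a similar contradiction argument applies: fix $y\in\partial G^s$ and suppose $\dist(y,\partial\eas)\geq\delta$. Then $B(y,\delta)$ does not meet $\supp(D 1_{\eas})$, so $1_{\eas}$ is a.e.\ constant on $B(y,\delta)$; since $|\eas\Delta G^s|\to 0$ while $1_{G^s}$ is fixed, $1_{G^s}$ must also be a.e.\ constant on $B(y,\delta)$, contradicting $y\in\supp(D 1_{G^s})$. Hypothesis \eqref{eq:assumhausdorff} is what makes this robust: it excludes plateaux of $f$ at level $s$ that a pointwise representative would turn into boundary invisible to $D 1_{G^s}$, and it furnishes approximating sets $G^{s\pm\nu}$ with $\dh(\partial G^{s\pm\nu},\partial G^s)\to 0$ as $\nu\to 0$. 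These are exploited through their variational mean curvatures $H_{G^{s\pm\nu}}$, which exist for finite-perimeter sets by \cite{BarGonTam87,Bar94} with the integrability afforded by \eqref{eq:psiassum}: comparing $\eas$ with $\eas\cap G^{s-\nu}$ and with $\eas\cup G^{s+\nu}$ and using submodularity of the perimeter, minimality of $\eas$, and the defining property of $H_{G^{s\pm\nu}}$ yields the quantitative localization $|G^{s+\nu}\setminus\eas|+|\eas\setminus G^{s-\nu}|\leq C(\nu)\,\alpha$. Sending $\alpha\to 0$ and then $\nu\to 0$, together with the Hausdorff continuity of $\partial G^{s\pm\nu}$, confines $\partial\eas$ to a vanishing neighbourhood of $\partial G^s$ and secures the matching near $\partial G^s$.

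The chief obstacle is the absence of a dual certificate for $f$: without a source condition the variational curvature $g_\alpha$ of the level sets $\eas$ is unbounded and degenerates as $\alpha\to0$, so there is no hope of density estimates for $\eas$ that are uniform up to $\partial G^s$. What makes the proof go through is that far from $\partial G^s$ only the $\alpha$-independent sign of $g_\alpha$, not its size, enters the comparison, so density estimates persist there with purely dimensional constants; near $\partial G^s$ this geometric input is replaced by the external information in \eqref{eq:assumhausdorff} and the variational curvatures of $G^{s\pm\nu}$. Making these two regimes overlap uniformly in $\alpha$ will be the delicate step.
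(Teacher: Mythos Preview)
Your density-based argument for the first half of the Hausdorff distance, $\sup_{x\in\partial\eas}\dist(x,\partial G^s)\to 0$, is correct and genuinely different from the paper's route. The observation that on any ball disjoint from $\partial G^s$ the prescribed curvature $g_\alpha=\psi'(s-f)/\alpha$ has a definite sign independent of $\alpha$, and that this sign alone yields a one-sided perimeter comparison (with $\eas\cup B(x,r)$ or $\eas\setminus B(x,r)$) and hence a purely dimensional density bound $c_d r^d$, is sound. Your argument for the reverse half is also correct once you add one line: since $\partial G^s$ is compact (it is closed, and $G^s\subset\supp f$ is bounded), a contradicting sequence $y_n\in\partial G^s$ has a limit $y\in\partial G^s$, and then the \emph{fixed} ball $B(y,\delta/4)$ carries the positive lower bound $\min(|B(y,\delta/4)\cap G^s|,|B(y,\delta/4)\setminus G^s|)$ against $|\eas\Delta G^s|$. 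Carried out this way, your two arguments together prove the conclusion \emph{without using assumption \eqref{eq:assumhausdorff} at all}; this is strictly stronger than the statement as given, and your hedging in the last paragraph is unwarranted.

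The paper proceeds quite differently. It does not use density estimates for $\eas$; instead it sandwiches $\eas$ between inner and outer approximants of nearby level sets, $G_{i,\eps}^{s+\delta}\subset\eas\subset G_{o,\eps}^{s-\delta}$ for $\alpha$ small (Lemma~\ref{lem:delta}). Here $G_{i,\eps}^{s+\delta}=(G^{s+\delta})^{\lambda(\eps,s+\delta)}$ and $G_{o,\eps}^{s-\delta}=(G^{s-\delta})^{-\lambda(\eps,s-\delta)}$ come from the Barozzi--Gonzalez--Tamanini construction and have \emph{bounded} variational curvature $\pm\lambda(\eps,s\pm\delta)$; the inclusions then follow by cut-and-paste comparison since on $G^{s+\delta}$ one has $g_\alpha\ls-\psi'(\delta)/\alpha$, which dominates any finite curvature once $\alpha$ is small. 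Hausdorff control of $\eas$ and its complement follows from the triangle inequality, Lemma~\ref{lem:approxsets}, and assumption \eqref{eq:assumhausdorff}, and then Proposition~\ref{prop:hausineq} passes to boundaries. Your approach trades these explicit set inclusions for implicit density bounds; the paper's approach, in exchange, gives the tangible sandwich which could be exploited for more quantitative statements.

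One correction to your variational-curvature sketch: the localization $|G^{s+\nu}\setminus\eas|+|\eas\setminus G^{s-\nu}|\ls C(\nu)\alpha$ does \emph{not} follow from comparing directly with $G^{s\pm\nu}$, because these are arbitrary finite-perimeter sets whose optimal variational curvature need only lie in $L^1$, not $L^\infty$. This is precisely why the paper compares with the approximants $G_{i,\eps}^{s+\delta}$ and $G_{o,\eps}^{s-\delta}$, whose curvatures are bounded by construction; with those one obtains exact inclusions for $\alpha$ small, not merely $O(\alpha)$ bounds. Since your density argument already suffices, you do not need this second line at all.
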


\begin{remark} Any piecewise constant function satisfies trivially \eqref{eq:assumhausdorff} at all the values that it does not attain, and therefore the conclusions of Theorem \ref{thm:generichausdorff} are valid. It also holds for almost every $s$ under the source condition of \cite{ChaDuvPeyPoo17, IglMer20}, a fact we prove in Corollary \ref{cor:sourceimplies}. In contrast, Example \ref{ex:pingpongballs} provides a function $f$ for which the set of values where \eqref{eq:assumhausdorff} does not hold is of full measure.\end{remark}

\subsection{Structure of the paper}
We start with some preliminary results in Subsection \ref{subsec:preliminaries}. Section \ref{sec:hausdorff} is dedicated to auxiliary results about convergence in Hausdorff distance of bounded subsets of $\R^d$. In Section \ref{sec:curvatures} we are concerned with variational mean curvatures, with the main goal of pinning down the construction of such a curvature on the outside of any finite perimeter set, and also stating basic comparison results. Section \ref{sec:local} aims at the proof of Theorem \ref{thm:localhausdorff} using density estimates that degenerate as the set $D$ is approached and dual stability estimates with respect to the noise, which are themselves proved in Appendices \ref{sec:dual} and \ref{sec:densityest}. Then, Section \ref{sec:noiseless} is devoted to the proof of Theorem \ref{thm:generichausdorff} by approximation of finite perimeter sets with the level sets of their optimal variational mean curvatures. Finally, in Section \ref{sec:uniform} we explore whether it is possible to recover uniform density estimates without the source condition; it turns out that this is possible for indicatrices of some planar polygons.

\subsection{Preliminaries}\label{subsec:preliminaries}

The total variation $\TV$ appearing in \eqref{eq:ROFpsi} is the norm of the distributional derivative as a Radon measure, that is
\begin{equation*}\label{eq:defTV}\TV(u) := |Du|(\R^d) =  \sup \left\{\int_{\R^d} u\, \div z\, \dd x \, \middle\vert\, z \in \mathcal{C}^\infty_c(\R^d\, ;\,  \R^d), \|z\|_{L^\infty(\R^d)} \ls 1\right\}.\end{equation*}
Correspondingly we say that $u:\R^d \to \R$ is of bounded variation when it belongs to
\[\BV(\R^d):=\left\{ u \in L^1_\loc(\R^d) \,\middle\vert\, \TV(u) < +\infty \right\},\]
where we remark that we only require such functions to be locally summable. Likewise, the space $\BV_\loc(\R^d)$ consists of those functions $u \in L^1_\loc(\R^d)$ for which $|Du|(K)<+\infty$ for each compact set $K$. A set $E$ is called of finite perimeter whenever its indicatrix $1_E$ is of bounded variation, and the perimeter is defined as 
\[\per(E):=\TV(1_E).\]
Since this notion is invariant with respect to zero Lebesgue measure modification of $E$, we need a notion of boundary which satisfies this invariance as well. For this purpose, we can take a representative of $E$ for which the topological boundary equals the support of the derivative of $1_E$, which can be described \cite[Prop.~12.19]{Mag12} as
\begin{equation*}\label{eq:bdy}\partial E = \supp D1_E =\left\{ x \in \R^d \,\middle\vert\, 0< \frac{|E \cap B(x,r)|}{|B(x,r)|} < 1 \text{ for all } r>0\right\},\end{equation*}
and this choice will be assumed in all what follows. Notice that we might have $|\partial E| >0$ (see \cite[Example 12.25]{Mag12} for an example), so particular care is needed when combining topological and measure-theoretic arguments for this boundary. The topological interior of a set $E$ will be denoted by $\accentset{\circ}{E}$, and by $E^{(1)}$ its subset of points of full density in $E$. Moreover, convex hulls are denoted by $\conv E$ and complements by $E^c := \R^d \setminus E$.

\begin{definition}\label{def:subgrad}
Taking into account the Sobolev embedding $\BV(\R^d) \cap L^1(\R^d) \subset L^{d/(d-1)}(\R^d)$, we think of the total variation $\TV$ as defined on $L^{d/(d-1)}(\R^d)$ and with value $+\infty$ on $L^{d/(d-1)}(\R^d)\setminus \BV(\R^d)$. In this context, we speak of its subgradient at some function $u$ and denoted by $\partial \TV(u)$, to refer to the set of functions $v \in L^d(\R^d)$ such that
\begin{equation}\TV(\tilde u) - \TV(u) \gs \int_{\R^d} v (\tilde u - u) \dd x \text{ for all }\tilde u \in L^{\frac{d}{d-1}}(\R^d).\end{equation}
Let us note that by considering the functional in a larger space, we reduce the set of possible subgradients and avoid using the dual of the non-reflexive space $BV(\R^d)$.
\end{definition}

\begin{definition}\label{def:unifconv}
Let $\psi: \R \to \R$ be a convex function. We say that $\psi$ is uniformly convex \cite[Chapter 5.3]{BorVan10} with modulus of uniform convexity $h_\psi >0$ when for all $s,t \in \R$ and $0 \ls \mu \ls 1$ we have
\[\psi\left((1-\mu)s+\mu t)\right) \ls (1-\mu)\psi(s) + \mu\psi(t) - \mu(1-\mu)h_\psi\left(|s-t|\right).\]
Clearly there is a largest $h_\psi$ satisfying the last inequality, since it is stable under taking the maximum over two such functions.

Moreover, the function $\psi$ is said to be strictly convex when for all $s, t \in \R$ with $s \neq t$ and $0 < \mu < 1$ we have
\[\psi\left((1-\mu)s+\mu t)\right) < (1-\mu)\psi(s) + \mu\psi(t),\]
this property being weaker than uniform convexity.
\end{definition}

From strict convexity and general properties of the space $\BV(\R^d)$ one can deduce the following basic result on existence and convergence of minimizers for \eqref{eq:ROFpsi}:

\begin{proposition}\label{prop:existconv}
Assuming \eqref{eq:psiassum} and $\int \psi(w) < +\infty$, the minimization problem \eqref{eq:ROFpsi} admits a unique solution $u_{\alpha, w}$. Furthermore, if $\alpha_n \to 0$ and $w_n$ are such that \begin{equation}\label{eq:easyparamchoice}\frac{1}{\alpha_n} \int \psi(w_n) \ls C,\end{equation} then $u_{\alpha_n,w_n} \to f$ weakly in $L^{d/(d-1)}$ and strongly in $L^q_\loc$ for $1\ls q<d/(d-1)$.
\end{proposition}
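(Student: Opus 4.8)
The plan is to run the direct method of the calculus of variations for fixed $(\alpha,w)$ to obtain existence, use strict convexity for uniqueness, and then a compactness-and-identification argument as $\alpha_n\to0$. I will use that $f\in\BV(\R^d)$ has bounded support (as in the statements of the main theorems), so that $f\in\BV(\R^d)\cap L^1(\R^d)\subset L^{d/(d-1)}(\R^d)$ by the Sobolev embedding recalled in Definition \ref{def:subgrad}; in particular $u=f$ is an admissible competitor for \eqref{eq:ROFpsi} with finite energy $\int\psi(w)+\alpha\TV(f)$.

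For existence, take a minimizing sequence $u_n$; comparison with $u=f$ bounds both $\int\psi(u_n-f-w)$ and $\alpha\TV(u_n)$. The step specific to the unbounded domain is to upgrade the bare $L^1_\loc$ membership to a bound in $L^{d/(d-1)}(\R^d)$. Since $\psi\gs 0$ is even, vanishes only at $0$ and is nondecreasing on $[0,\infty)$ (see the Remark following \eqref{eq:psiassum}), Chebyshev's inequality gives $|\{|u_n-f-w|>t\}|\ls\psi(t)^{-1}\int\psi(u_n-f-w)$ and $|\{|w|>t\}|\ls\psi(t)^{-1}\int\psi(w)$ for every $t>0$; together with $|\supp f|<+\infty$ this shows that each $u_n$, and likewise any minimizer, vanishes at infinity in the sense that $|\{|u_n|>t\}|<+\infty$ for all $t>0$. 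The coarea formula and the isoperimetric inequality in $\R^d$ then yield the Sobolev–Gagliardo–Nirenberg bound $\|u_n\|_{L^{d/(d-1)}}\ls C_d\,\TV(u_n)$, so the minimizing sequence is bounded in $L^{d/(d-1)}(\R^d)$. Standard $\BV$ compactness on balls plus a diagonal extraction gives a subsequence with $u_n\to u$ in $L^1_\loc(\R^d)$ and a.e., where $u\in L^{d/(d-1)}(\R^d)$, $|Du|(\R^d)\ls\liminf|Du_n|(\R^d)$ by lower semicontinuity of the total variation, and $\int\psi(u-f-w)\ls\liminf\int\psi(u_n-f-w)$ by Fatou's lemma (using that $\psi\gs0$ is continuous). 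Hence $u$ is a minimizer, and by the Sobolev bound $u\in\BV(\R^d)$.

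Uniqueness is immediate: two distinct minimizers $u_1\ne u_2$ would differ on a set of positive measure, and there strict convexity of $\psi$ makes $u\mapsto\int\psi(u-f-w)$ strictly convex while $\TV$ is convex, so $\tfrac12(u_1+u_2)$ would have strictly smaller energy. For the convergence as $\alpha_n\to0$, write $u_n=u_{\alpha_n,w_n}$ and compare once more with $f$: the parameter choice \eqref{eq:easyparamchoice} gives $\TV(u_n)\ls\TV(f)+\alpha_n^{-1}\int\psi(w_n)\ls\TV(f)+C$ and $\int\psi(u_n-f-w_n)\ls\alpha_n(\TV(f)+C)\to0$. The vanishing-at-infinity property and the Sobolev inequality then bound $u_n$ in $L^{d/(d-1)}(\R^d)$ uniformly in $n$, so along a subsequence $u_n\wkto u$ in $L^{d/(d-1)}$ and $u_n\to u$ in $L^1_\loc$ and a.e. To identify $u=f$, Chebyshev's inequality applied to $\int\psi(u_n-f-w_n)\to0$ and to $\int\psi(w_n)\ls\alpha_nC\to0$ shows $u_n-f-w_n\to0$ and $w_n\to0$ in (global) measure, whence $u_n\to f$ in measure and therefore $u=f$. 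Since every subsequence has a further subsequence with these properties and $u_n$ is bounded in the reflexive space $L^{d/(d-1)}(\R^d)$, the whole sequence converges weakly to $f$ there and strongly in $L^1_\loc$; strong $L^q_\loc$ convergence for $1\ls q<d/(d-1)$ then follows from the $L^1_\loc$ convergence and the uniform $L^{d/(d-1)}$ bound via Vitali's theorem (uniform integrability of $|u_n|^q$ on bounded sets).

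I expect the only non-routine point to be this passage from the local summability built into the definition of $\BV(\R^d)$ to genuine $L^{d/(d-1)}(\R^d)$ control: one must exploit that the data term penalizes deviation from $f+w$, which itself vanishes at infinity, in order to get the finiteness of super-level sets required by the global Sobolev inequality. Once that is in place, everything else is the textbook direct method together with the convexity and continuity of $\psi$ recorded after \eqref{eq:psiassum}.
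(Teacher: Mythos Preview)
Your proof is correct and follows essentially the same approach as the paper's: compare with $u=f$ to bound $\TV(u_n)$ and $\int\psi(u_n-f-w)$, argue that the minimizing sequence vanishes at infinity so that the Sobolev inequality holds without additive constant, extract a limit by $\BV$ compactness, and conclude by lower semicontinuity and strict convexity; the convergence as $\alpha_n\to 0$ is handled identically via the same compactness, $\int\psi(u_n-f-w_n)\to 0$, and a subsequence argument. The only cosmetic differences are that you invoke Fatou's lemma for the data term (the paper cites weak lower semicontinuity of convex integrands instead) and that you make the final $L^q_\loc$ interpolation explicit via Vitali, which the paper leaves implicit.
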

\begin{proof}
Let $u_k$ be minimizing sequence for \eqref{eq:ROFpsi}. Discarding some elements of the sequence if necessary and using the symmetry of $\psi$ we have the estimate 
\begin{equation}\label{eq:fest}\frac{1}{\alpha}\int \psi(u_k-f-w) + \TV(u_k) \ls \frac{1}{\alpha}\int \psi(w) + \TV(f).\end{equation}
On the other hand we also have the Sobolev inequality \cite[Thm.~3.47]{AmbFusPal00}
\begin{equation}\label{eq:sobolevineq}\|u_k - c_k\|_{L^{d/(d-1)}} \ls C \TV(u_k)\end{equation}
for some constants $c_k \in \R$. Noticing that $\psi(t)>0$ for $t \neq 0$ and that $\int \psi(w)$ is finite, we must have $\left|\R^d \setminus \{|w|\gs \eps\}\right|< +\infty$ for all $\eps >0$. Using then \eqref{eq:fest} we have that $\int \psi(u_k-f-w)$ is also finite, so (because $f$ is compactly supported) the same is true for $\R^d \setminus \{|u_k|\gs \eps\}$, and since we are working with functions defined on all of $\R^d$ we conclude that $c_k=0$ for all $k$. Therefore, using weak-* compactness in $\BV$ \cite[Thm.~3.23]{AmbFusPal00} and weak compactness in $L^{d/(d-1)}$ we can extract a limit $u_{\alpha,w}$ in those topologies. Moreover, we have lower semicontinuity of $\TV$ with respect to $L^1_\loc$ convergence \cite[Rem.~3.5]{AmbFusPal00}, while positivity and convexity of $\psi$ implies that the first term of \eqref{eq:ROFpsi} is also lower semicontinuous with respect to weak $L^{d/(d-1)}$ convergence \cite[Thm.~3.20]{Dac08}, so $u_{\alpha, w}$ must be a minimizer of \eqref{eq:ROFpsi}, unique since $\psi$ is strictly convex.

In view of \eqref{eq:fest} and \eqref{eq:easyparamchoice}, one can apply the same compactness arguments to $u_{\alpha_n,w_n}$ to obtain a subsequence converging in weakly in $L^{d/(d-1)}$ and strongly in $L^q_\loc$. Moreover since $\psi$ is strictly convex, $\psi(0)=0$ and $\psi(t)>0$ if $t >0$ it must be increasing on $[0,+\infty)$, so \eqref{eq:easyparamchoice} implies that $w_n \to 0$ in measure, which in turn implies \cite[Thm.~2.30]{Fol99} also $w_n(x) \to 0$ for a.e.~$x$, up to possibly taking a further subsequence. Finally \eqref{eq:fest} also gives that $\int \psi(u_{\alpha_n, w_n}-f-w_n)\to 0$, so the limit must be $f$. Since for any subsequence we are able to find a further subsequence converging to the fixed limit $f$, the whole sequence $u_{\alpha_n,w_n}$ must converge to it.
\end{proof}

We recall that for any $E,F$ with finite perimeter we have \cite[Prop.~3.38(d)]{AmbFusPal00}
\begin{equation} \per(E \cap F) + \per(E \cup F) \ls \per(E) + \per(F),\label{eq:subaddper} \end{equation}
and the isoperimetric inequality \cite[Thm.~3.46]{AmbFusPal00}
\begin{equation}\per(F) \gs \Theta_d \min\left(|F|^{(d-1)/d}, |\R^d \setminus F|^{(d-1)/d}\right), \text{ with }\Theta_d = \frac{\per(B(0,1))}{|B(0,1)|^{(d-1)/d}}.\label{eq:isoper}\end{equation}

Below we study in detail problem \eqref{eq:ROFpsi} with $f=1_D$ and $w=0$, for which the minimizer $u_\alpha := u_{\alpha, 0}$ has level sets $E_\alpha^s := \{u_\alpha > s\}$ that minimize for a.e. $s \in (0,1)$
\begin{equation*}E \mapsto \per(E) + \frac{1}{\alpha} \int_E \psi'(s-f(x)) \dd x = \per(E) - \frac{\psi'(1-s)}{\alpha} |E \cap D| + \frac{\psi'(s)}{\alpha} |E \setminus D|,\end{equation*}
as can be seen from \eqref{eq:vawformula}, the coarea formula for $\BV$ functions \cite[Thm.~3.40]{AmbFusPal00} and the general layer cake formula \cite[Thm.~1.13]{LieLos01}. More generally we have:

\begin{proposition}\label{prop:slicedROFpsi}
 Let $u$ minimize \eqref{eq:ROFpsi}. Then for $s \in (0,+\infty)$ its upper level sets $E^s := \{u>s\}$ minimize, among sets of finite mass, the functional 
\begin{equation*}\label{eq:slicedprob}E \mapsto \per(E) + \frac{1}{\alpha}\int_E \psi'(s-f-w),\end{equation*}
and moreover we have
\begin{equation*}\label{eq:perimetereq}\per(E^s) = \frac{1}{\alpha} \int_{E^s} \psi'(f+w-s).\end{equation*}
For $s \in (-\infty,0)$ and the lower level sets $\{u<s\}$ analogous statements hold by changing the sign on the integral terms.
\end{proposition}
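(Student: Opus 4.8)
Write $g:=f+w$. The plan is to pass from the scalar problem \eqref{eq:ROFpsi} to its geometric slices, using the optimality condition together with the coarea and layer-cake formulas. Since $\psi\in\mathcal C^1(\R)$ by the remark following \eqref{eq:psiassum}, the fidelity term is differentiable with derivative $\psi'(\cdot-g)$, so $u$ minimizes \eqref{eq:ROFpsi} if and only if $v_{\alpha,w}:=\tfrac1\alpha\psi'(g-u)$ belongs to $\partial\TV(u)$ in the sense of Definition \ref{def:subgrad}; this is (an immediate reformulation of) the identity \eqref{eq:vawformula}, and the scaling identity $\alpha\TV(u)=-\int_{\R^d}\psi'(u-g)\,u\,\dd x$, obtained by differentiating $\lambda\mapsto\int\psi(\lambda u-g)+\alpha\TV(\lambda u)$ at $\lambda=1$, additionally gives $\TV(u)=\int_{\R^d}v_{\alpha,w}\,u\,\dd x$. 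Convexity of $\psi$ with the growth bound in \eqref{eq:psiassum} yields $|\psi'(r)|\ls C(1+|r|^{1/(d-1)})$, which with $u\in L^{d/(d-1)}(\R^d)$ (proof of Proposition \ref{prop:existconv}) gives $v_{\alpha,w}\in L^{d}(\R^d)$, so the inclusion is meaningful and $v_{\alpha,w}\tilde u$ is integrable for every $\tilde u\in L^{d/(d-1)}(\R^d)$.

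Next I would slice: for any $\tilde u\in\BV(\R^d)\cap L^{d/(d-1)}(\R^d)$, the coarea formula $\TV(\tilde u)=\int_{\R}\per(\{\tilde u>t\})\dd t$ and the signed layer-cake identity $\int_{\R^d}v_{\alpha,w}\tilde u\,\dd x=\int_0^{\infty}\!\int_{\{\tilde u>t\}}v_{\alpha,w}\,\dd t-\int_{-\infty}^{0}\!\int_{\{\tilde u<t\}}v_{\alpha,w}\,\dd t$ rewrite $\tilde u\mapsto\TV(\tilde u)-\int v_{\alpha,w}\tilde u$ as a superposition over $t>0$ of $E\mapsto\per(E)-\int_E v_{\alpha,w}$ and over $t<0$ of $E\mapsto\per(E)+\int_E v_{\alpha,w}$. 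As $v_{\alpha,w}\in\partial\TV(u)$ means precisely that $u$ minimizes $\tilde u\mapsto\TV(\tilde u)-\int v_{\alpha,w}\tilde u$, this forces, for a.e.\ $t>0$, that $\{u>t\}$ minimizes $E\mapsto\per(E)-\int_E v_{\alpha,w}$ among finite-perimeter sets and attains its infimum, which is therefore $0$ (the integrand in the slicing identity being a.e.\ constant while $\int_0^\infty(\per(\{u>t\})-\int_{\{u>t\}}v_{\alpha,w})\dd t$ is a summand of the finite number $\TV(u)-\int v_{\alpha,w}u=0$); symmetrically for $\{u<t\}$ and $E\mapsto\per(E)+\int_E v_{\alpha,w}$ when $t<0$. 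To recover the weight in the statement, I replace $v_{\alpha,w}$ by $v_t:=\tfrac1\alpha\psi'(g-t)$: monotonicity of $\psi'$ gives $v_{\alpha,w}\ls v_t$ on $\{u>t\}$ and $v_{\alpha,w}\gs v_t$ on $\{u\ls t\}$, so for any competitor $F$ the difference $\big[\per(F)-\int_F v_t\big]-\big[\per(\{u>t\})-\int_{\{u>t\}}v_t\big]$ equals the analogous difference for $v_{\alpha,w}$ (nonnegative by the previous step) plus $\int_{\{u>t\}\setminus F}(v_t-v_{\alpha,w})$ (nonnegative, as $v_t-v_{\alpha,w}\gs0$ there) minus $\int_{F\setminus\{u>t\}}(v_t-v_{\alpha,w})$ (nonpositive, as $v_t-v_{\alpha,w}\ls0$ there), hence is nonnegative; thus $\{u>t\}$ minimizes $E\mapsto\per(E)-\int_E v_t=\per(E)+\tfrac1\alpha\int_E\psi'(t-g)$, which is the functional in the statement, and $t<0$ is symmetric.

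The perimeter equality then falls out of the vanishing infimum: for a.e.\ $s>0$, $\per(E^s)-\int_{E^s}v_{\alpha,w}=0$, i.e.\ $\per(E^s)=\int_{E^s}v_{\alpha,w}=\tfrac1\alpha\int_{E^s}\psi'(f+w-u)$, with the sign of the integral term reversed for $s<0$ because there the relevant superposition uses $E\mapsto\per(E)+\int_E v_{\alpha,w}$. I expect the genuine obstacle to be the decoupling step in the second paragraph — passing from ``$u$ minimizes the $t$-superposition'' to ``each $\{u>t\}$ minimizes and annihilates its slice functional'' — because an arbitrary family of competing sets indexed by $t$ need not be monotone, hence need not be the level sets of a $\BV$ function, so one cannot simply re-assemble arbitrary slice-competitors into a global one. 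The standard remedy is that, by the submodularity \eqref{eq:subaddper} of the perimeter together with the additivity of $E\mapsto\int_E v_{\alpha,w}$, the minimizers of $E\mapsto\per(E)-\int_E v_{\alpha,w}$ form a lattice; a monotone selection from it exists, re-assembles (via the slicing identity) into a $\BV$ function realizing the infimum of the superposition, and comparing its value slice by slice with that of $u$ forces equality at a.e.\ level. The remaining points — Fubini/finiteness for the layer-cake identity and $v_{\alpha,w}u\in L^1$, and reduction to bounded competitors in the cut-and-paste — are routine given $u\in L^{d/(d-1)}(\R^d)$ and the growth of $\psi'$.
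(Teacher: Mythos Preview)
The paper defers both assertions to the literature, so there is no in-paper argument to compare against; your outline is the standard one and is correct, with two clarifications.

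The decoupling you flag as the obstacle does not need a lattice or monotone-selection argument. Since $\TV$ is positively $1$-homogeneous, $v_{\alpha,w}\in\partial\TV(u)$ forces $v_{\alpha,w}\in\partial\TV(0)$ (you already established $\TV(u)=\int v_{\alpha,w}\,u$; plug this back into the subgradient inequality to get $\TV(\tilde u)\gs\int v_{\alpha,w}\,\tilde u$ for all $\tilde u$). Hence $\per(E)-\int_E v_{\alpha,w}\gs 0$ for \emph{every} finite-perimeter $E$, and likewise $\per(E)+\int_E v_{\alpha,w}\gs 0$. Your slicing identity then writes $0=\TV(u)-\int v_{\alpha,w}\,u$ as the integral over $t$ of these nonnegative slice values, forcing them to vanish for a.e.\ $t$; this simultaneously shows that $\{u>t\}$ is a minimizer (the infimum is $0$, attained at $\emptyset$) and that it annihilates the functional. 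No re-assembly of slice competitors into a $\BV$ function is required, and the phrase ``integrand a.e.\ constant'' in your parenthetical is neither needed nor quite right.

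On the perimeter equality: what you derive, $\per(E^s)=\int_{E^s}v_{\alpha,w}=\tfrac1\alpha\int_{E^s}\psi'(f+w-u)$, is the identity the paper actually uses later (the proof of Proposition~\ref{prop:compsupp} invokes $\per(E)=\sign(s)\int_E v_{\alpha,w}$). The literal formula in the statement with $s$ in place of $u$ is in general a strict inequality: your own comparison gives $v_{\alpha,w}<v_s$ pointwise on $E^s=\{u>s\}$ because $\psi'$ is strictly increasing, so $\per(E^s)<\tfrac1\alpha\int_{E^s}\psi'(f+w-s)$ whenever $|E^s|>0$. You should not attempt to prove the version with $s$.
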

\begin{proof}
The proof of the first statement can be found in \cite[Prop.~2.3.14]{Jal12}. The second is proved in \cite[Prop.~3]{ChaDuvPeyPoo17}.
\end{proof}

\begin{remark}\label{rem:signs}Note that if $s<0$ it is necessary to work with the lower level sets $\{u<s\}$ so that $|\{u<s\}|<+\infty$, in which case the integral terms change sign. This will be useful to keep in mind in some results below.
\end{remark}

\section{Density estimates and Hausdorff convergence}\label{sec:hausdorff}
We begin with some auxiliary results on convergence in the Hausdorff distance, defined for $E,F \subset \R^d$ as
\begin{equation}\label{eq:hausdist}\dh(E,F):=\max\left\{\sup_{x \in E} \dist(x, F),\, \sup_{y \in F} \dist(y,E)\right\},\end{equation}
 and its relation with $L^1$ convergence when density estimates are available, which will be used in the proof of the main results.
\begin{definition}
Let $\{E_\gamma\}_\gamma$ be a family of finite perimeter sets of uniformly bounded measure, that is, there is $M>0$ such that $|E_\gamma| <M$ for all $\gamma$. If there are constants $r_0 >0$ and $C \in (0,1)$ such that for all $\gamma$ and all $x \in \partial E_\gamma$ we have for all $r < r_0$ that
\begin{equation}\label{eq:innerdensity}\frac{|E_\gamma \cap B(x,r)|}{|B(x,r)|} \gs C,\end{equation}
we say that this family satisfies \textit{uniform inner density estimates} with constant $C$ at scale $r_0$. Similarly, if instead we have for $r \ls r_0$
\begin{equation}\label{eq:outerdensity}\frac{|B(x,r) \setminus E_\gamma|}{|B(x,r)|} \gs C\end{equation}
we say that this family satisfies \textit{uniform outer density estimates}, again with constant $C$ at scale $r_0$. When speaking of \textit{uniform density estimates}, we understand that both estimates hold with the same constants.
\end{definition}

First, in \cite{IglMerSch18, IglMer20} the following result is claimed, although with some flaws in its presentation:
\begin{proposition}\label{prop:hausdorff_of_sets}Assume we have $\{E_n\}_n, E_0$ are subsets of $\R^d$ satisfying uniform inner density estimates with some scale $r_0$ and constant $C$, and such that $|E_n \Delta E_0| \to 0$. Then $\dh(E_n, E_0) \to 0$.
\end{proposition}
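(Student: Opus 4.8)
The plan is to show that each of the two one-sided suprema in the definition \eqref{eq:hausdist} of $\dh(E_n, E_0)$ tends to zero, arguing by contradiction. Suppose, for instance, that $\sup_{x \in E_n} \dist(x, E_0)$ does not converge to $0$. Then, up to a subsequence, there are $\delta > 0$ and points $x_n \in \partial E_n$ (one can reduce to boundary points, since the supremum of the distance over $E_n$ is attained, or approached, on $\partial E_n$ once $E_0$ is nonempty and $E_n \not\subset E_0$) with $\dist(x_n, E_0) \gs \delta$, so that $B(x_n, \delta) \cap E_0 = \emptyset$. Applying the uniform inner density estimate \eqref{eq:innerdensity} at $x_n \in \partial E_n$ with radius $r = \min(\delta, r_0)$ gives
\[
|E_n \cap B(x_n, r)| \gs C |B(x_n, r)| = C \omega_d r^d > 0,
\]
a fixed positive constant independent of $n$. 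Since $B(x_n, r)$ is disjoint from $E_0$, this entire mass lies in $E_n \setminus E_0$, hence $|E_n \Delta E_0| \gs C \omega_d r^d$ for all such $n$, contradicting $|E_n \Delta E_0| \to 0$. The symmetric statement, that $\sup_{y \in E_0} \dist(y, E_n) \to 0$, requires the analogous lower bound on $|E_0 \cap B(y,r)|$ for $y \in \partial E_0$; here one uses that $E_0$ itself is among the sets satisfying the uniform inner density estimate (it is listed alongside $\{E_n\}_n$ in the hypothesis), so for $y_n \in \partial E_0$ with $B(y_n, \delta) \cap E_n = \emptyset$ we get $|E_0 \cap B(y_n,r)| \gs C \omega_d r^d$, and this mass lies in $E_0 \setminus E_n$, again contradicting $L^1$ convergence.

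The one point requiring care — and the reason the cited references are said to have ``flaws in presentation'' — is the reduction to \emph{boundary} points in each supremum, and more precisely the interaction between the topological boundary $\partial E$ (which, as the preliminaries stress, may have positive Lebesgue measure) and the measure-theoretic density condition. For the supremum $\sup_{x\in E_n}\dist(x,E_0)$, if it exceeds $\delta$ then it is witnessed by some $x\in E_n$; but \eqref{eq:innerdensity} is only assumed at points of $\partial E_n$, so one must produce a boundary point near $x$ at which a ball still avoids $E_0$. If $x$ is interior to $E_n$ one walks from $x$ towards $E_0$ along a segment until first hitting $\partial E_n$ — the first such point $x'$ satisfies $\dist(x', E_0) > 0$ and, crucially, one can arrange $\dist(x', E_0)$ to remain bounded below by a fixed fraction of $\delta$ by choosing $x$ close to realizing the supremum; alternatively, and more cleanly, one notes that $\overline{E_n}^{(1)}\subset \overline{E_n}$ and that any $x\in E_n$ with large distance to $E_0$ has a point of $\partial E_n$ within distance, say, $\delta/2$ unless $B(x,\delta/2)\subset \accentset{\circ}{E_n}$, in which case $|E_n\cap B(x,\delta/2)|=\omega_d(\delta/2)^d$ outright and one does not even need the density estimate. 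Packaging these two cases uniformly is the main obstacle; once done, the quantitative lower bound on $|E_n\Delta E_0|$ is immediate.

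I would therefore structure the proof as: (i) fix $\delta>0$ and a candidate point $x$ (resp.\ $y$) in one of the two sets with distance $\gs\delta$ to the other; (ii) dichotomy — either a definite ball around it already lies in the interior, giving a fixed mass of $E_n\Delta E_0$ directly, or there is a point of the relevant topological boundary within distance $\delta/2$, where \eqref{eq:innerdensity} applies with radius $\min(\delta/2, r_0)$ and yields the same fixed mass after intersecting with the complement of the other set; (iii) conclude $|E_n\Delta E_0|\gs C\omega_d\min(\delta/2,r_0)^d$ along the contradicting subsequence, contradicting the hypothesis; (iv) run (i)--(iii) for both suprema in \eqref{eq:hausdist}, using in the second case that $E_0$ also satisfies the uniform inner density estimate. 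The uniform boundedness of measures ($|E_\gamma|<M$) is only needed to guarantee the sets are genuine sets of finite measure so that $|E_n\Delta E_0|$ is finite and the statement $|E_n\Delta E_0|\to 0$ is meaningful; it plays no further quantitative role.
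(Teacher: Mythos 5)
Your final proposal (steps (i)--(iv)) is essentially identical to the paper's proof: the paper packages your dichotomy as a preliminary claim that the boundary-point density estimate at scale $r_0$ and constant $C$ implies an estimate at all points of $\overline{E_n}$ at scale $2r_0$ with constant $C/2^d$ (proved by exactly your case split, $\dist(y,\partial E_n)\gs r$ versus $<r$), and then runs the same contradiction argument for both one-sided suprema. You were right to flag the naive ``the supremum is attained on $\partial E_n$'' reduction in your opening paragraph as unreliable --- it can fail, e.g.\ if the farthest point of $E_n$ from $E_0$ is an interior ridge point of $\dist(\cdot,E_0)$ --- but the dichotomy you substitute for it is precisely the paper's fix, so the proposals coincide.
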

\begin{proof}
First, we notice that if we have the estimate 
\begin{equation}\label{eq:bdyinnerdens}|E_n \cap B(x,r)| \gs C |B(x,r)| \text{ for }x \in \partial E_n\text{ and }r \ls r_0,\end{equation}then we also have
\begin{equation}\label{eq:intinnerdens}|E_n \cap B(y,\tilde r)| \gs \frac{C}{2^d} |B(y,\tilde r)| \text{ for }y \in \overline{E_n}\text{, and }\tilde r \ls 2 r_0.\end{equation}
To see this, first set $r = \tilde r / 2$. Then, if $\dist(y, \partial E_n) \gs r$, the whole ball $B(y,r) \subset E_n$, so that $|E_n \cap B(y,\tilde r)| \gs |B(y,r)| = |B(y,\tilde r)|/2^d >C|B(y,\tilde r)|/2^d$ and \eqref{eq:intinnerdens} holds. If $0 \ls \dist(y, \partial E_{n}) < r$, then there is at least one boundary point $x_y \in \partial E_n$ for which $B(x_y, r) \subset B(y, \tilde r)$, and applying \eqref{eq:bdyinnerdens} to $x_y$ and $r$ we get \eqref{eq:intinnerdens}.

With these facts, let us assume that there is $\delta > 0$ such that $\dh(E_n, E_0) > \delta$ for infinitely many $n$, and derive a contradiction. Reducing $\delta$ if necessary, we can assume that $\delta \ls 2 r_0$. In view of the definition \eqref{eq:hausdist} we must then have a subsequence $n_k$ for which either $\sup_{x \in E_{n_k}} \dist(x, E_0) > \delta$ or $\sup_{x \in E_0} \dist(x, E_{n_k}) > \delta$. For the first case, we have a sequence of points $x_{n_k} \in E_{n_k}$ for which $\dist(x, E_0) > \delta$. Then \eqref{eq:intinnerdens} applied to $E_{n_k}$ and with $\tilde r = \delta$ gives
\[|E_{n_k} \Delta E_0| \gs |E_{n_k} \setminus E_0| \gs |E_{n_k} \cap B(x_{n_k},\delta)| \gs \frac{C}{2^d} \delta^d |B(0,1)|,\]
a contradiction with $|E_n \Delta E_0| \to 0$. For the second case, we obtain $x_{n_k} \in E_0$ for which $\dist(x_{n_k}, E_{n_k}) > \delta$. In this case, we use \eqref{eq:intinnerdens} for $E_0$ to end up as before with
\[|E_{n_k} \Delta E_0| \gs |E_0 \setminus E_{n_k}| \gs |E_0 \cap B(x_{n_k}, \delta)| \gs \frac{C}{2^d} \delta^d |B(0,1)|,\]
again a contradiction.
\end{proof}
In Proposition \ref{prop:hausdorff_of_sets} we only used the inner density estimates. However, for level sets of total variation minimizers and imaging applications one is mostly interested in convergence of their boundaries. The latter is not implied by the convergence of the sets themselves, even under other modes of convergence assumed, as demonstrated in the following example. We will see later that to obtain convergence of the boundaries, the outer density estimates also need to be used.

\begin{example}\label{ex:set_not_bdy}Consider the unit square $E_0:=(0,1)^2$ and a sequence obtained by removing from it thin triangles:
\[E_n:=(0,1)^2 \setminus \mathrm{Conv}\left(\left\{\left(\frac{1}{2}-\frac{1}{n+2}, 0\right),\ \left(\frac{1}{2}+\frac{1}{n+2}, 0\right),\ \left(\frac{1}{2}, \frac{1}{2}\right)\right\}\right),\]
which admits uniform inner density estimates, but with the outer densities not being uniform at $(1/2, 1/2)$. We have $|E_n \Delta E_0| \to 0$ and $D1_{E_n} \stackrel{\ast}{\rightharpoonup} D1_{E_0}$. To see the latter, just notice that $D1_{E_n} = \nu_{E_n} \mathcal{H}^1\mres\partial^\ast E_n$, and that each of the non-vanishing sides of the triangle converge to the same vertical segment, but with opposite orientations. Moreover, since $E_n \subset E_0$ also
\[\dh(E_n,E_0)=\sup_{x \in E_0}\dist(x,E_n) \ls \frac{1}{n+2} \to 0, \text{ but }\dh(\partial E_n, \partial E_0) = \frac{1}{2}.\]
\end{example}

\begin{remark}
In general, the Hausdorff distances $\dh(E, F)$ and $\dh(\partial E, \partial F)$ are not related. In \cite[Thm.~14]{Wil07} it is proven that these are equal for bounded closed convex sets, and in \cite[Examples 6 and 13]{Wil07} examples are given for pairs of planar sets where both possible strict inequalities hold.
\end{remark}

Under $L^1$ convergence, the Hausdorff convergence of boundaries is in fact stronger:
\begin{proposition}\label{prop:bdy_implies_set}Assume that $\{E_n\}_n, E_0$ are subsets of $\R^d$ such that we have the convergences 
\[|E_n \Delta E_0| \to 0\text{ and } \dh(\partial E_n, \partial E_0) \to 0.\]
Then also $\dh(E_n, E_0) \to 0$.
\end{proposition}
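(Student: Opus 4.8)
The plan is to argue by contradiction. Suppose $\dh(E_n, E_0) \not\to 0$. Then, after passing to a subsequence, there is some $\delta > 0$ with $\dh(E_n, E_0) > \delta$ for every $n$, and by the definition \eqref{eq:hausdist} of the Hausdorff distance, a further extraction lets us assume that the same one of the two suprema exceeds $\delta$ for all $n$. Since the hypotheses are symmetric with respect to exchanging the sequence $(E_n)_n$ with the constant sequence $E_0$ (indeed $|E_n \Delta E_0| = |E_0 \Delta E_n|$ and $\dh(\partial E_n, \partial E_0) = \dh(\partial E_0, \partial E_n)$), it suffices to treat the case in which there are points $x_n \in E_n$ with $\dist(x_n, E_0) > \delta$ for all $n$. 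The goal is then to contradict either $|E_n \Delta E_0| \to 0$ or $\dh(\partial E_n, \partial E_0) \to 0$.

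First I would record two elementary consequences of the representative convention fixed in Subsection \ref{subsec:preliminaries}, in which for any finite perimeter set $F$ one has $\partial F = \supp D1_F$, the set of points $x$ with $0 < |F \cap B(x,r)|/|B(x,r)| < 1$ for every $r>0$. On one hand, any $z \in \partial F$ satisfies $|F \cap B(z,r)| > 0$ for all $r>0$, hence $z \in \overline{F}$ and $\dist(z, F) = 0$ (and likewise $\dist(z, F^c) = 0$). On the other hand, if $x \in F \setminus \partial F$ and $B(x,\rho) \cap \partial F = \emptyset$, then on the connected open ball $B(x,\rho)$, which is disjoint from $\supp D1_F$, the function $1_F$ is a.e. equal to a constant; this constant must be $1$, because $x$, not lying on $\partial F$, is a point of Lebesgue density one for $F$. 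Hence $|B(x,\rho) \setminus F| = 0$.

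With these at hand, fix $n$ and set $\rho := \delta/2$. If $\dist(x_n, \partial E_n) \gs \rho$, then $x_n \notin \partial E_n$ and $B(x_n, \rho) \cap \partial E_n = \emptyset$, so the second observation gives $|B(x_n,\rho) \setminus E_n| = 0$; since $\dist(x_n, E_0) > \delta$ implies $B(x_n,\rho) \cap E_0 = \emptyset$, we get $|E_n \Delta E_0| \gs |E_n \setminus E_0| \gs |E_n \cap B(x_n,\rho)| = |B(x_n,\rho)|$, a fixed positive number. If instead $\dist(x_n, \partial E_n) < \rho$, then $\partial E_n \neq \emptyset$ and there is $z_n \in \partial E_n$ with $|x_n - z_n| < \rho$, so $\dist(z_n, E_0) \gs \dist(x_n, E_0) - |x_n - z_n| > \delta - \rho = \rho$; but $z_n \in \partial E_n$ together with $\dh(\partial E_n, \partial E_0) \to 0$ produces $z_n' \in \partial E_0$ with $|z_n - z_n'| \to 0$, and the first observation gives $\dist(z_n', E_0) = 0$, so $\dist(z_n, E_0) \ls |z_n - z_n'| \to 0$. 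Therefore the second alternative is impossible once $n$ is large enough, so the first must occur for all large $n$, contradicting $|E_n \Delta E_0| \to 0$. This yields $\dh(E_n, E_0) \to 0$.

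I expect the only delicate point to be precisely the one addressed by the two observations: because the chosen boundary may have positive Lebesgue measure, one has to pass carefully between the topological/pointwise statements ($x_n \in E_n$, $z_n \in \partial E_n$, $z_n' \in \partial E_0$) and the measure-theoretic ones ($|B \setminus E_n| = 0$, $\dist(z_n',E_0)=0$). In contrast with Proposition \ref{prop:hausdorff_of_sets}, no density estimates are needed here, and no boundedness or finite-perimeter hypothesis on the $E_n$ beyond what makes $\partial E_n$ and $|E_n \Delta E_0|$ meaningful.
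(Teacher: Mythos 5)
Your proof is correct and follows essentially the same strategy as the paper's: reduce by contradiction to points $x_n \in E_n$ with $\dist(x_n,E_0)>\delta$, show they lie far from $\partial E_n$ using $\dh(\partial E_n,\partial E_0)\to 0$, and then produce a ball in $E_n\setminus E_0$ of fixed volume to contradict $|E_n\Delta E_0|\to 0$. The only stylistic difference is that you justify the crucial step ``ball disjoint from $\partial E_n$ and meeting $E_n$ is contained in $E_n$'' measure-theoretically via $D1_{E_n}$ having no support there, whereas the paper relies on the topological connectedness consequence of the same representative convention; both are valid, and your two preliminary observations make the handling of the measure-vs-topology distinction more explicit.
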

\begin{proof}
Assume that the hypotheses are satisfied but $\dh(E_n, E_0) \not\to 0$. Then there is $\delta >0$ with $\dh(E_{n_k}, E_0) > \delta$ for some subsequence $n_k$. Removing leading terms if needed, we can assume that 
\begin{equation}\label{eq:deltahalf}\dh(\partial E_{n_k}, \partial E_0) < \frac{\delta}{2}.\end{equation}
Now, we have
\[\dh(E_{n_k}, E_0) = \max\left(\sup_{x \in E_{n_k}} \dist(x, E_0), \sup_{x \in E_0} \dist(x, E_{n_k})\right) > \delta,\]
so at least one of the arguments in the supremum must be larger than $\delta$ for infinitely many $k$. Assume that it is the first one, and relabel the subsequence $n_k$ so that
\[\sup_{x \in E_{n_k}} \dist(x, E_0) > \delta,\]
implying that there is a sequence $x_{n_k} \in E_{n_k}$ for which $\dist(x_{n_k}, E_0) > \delta$. In consequence for all $y \in \overline{E_0}$, and in particular for all $y \in \partial E_0$, we have $|x_{n_k} - y| \gs \delta$. Therefore, for each $k$ we must have $\dist(x_{n_k}, \partial E_{n_k}) \gs \delta/2$, since otherwise \eqref{eq:deltahalf} and the triangle inequality would lead to a contradiction. Because of the last inequality, the ball $B(x_{n_k},\delta/3)$ cannot intersect $\partial E_{n_k}$ and in consequence also not $\R^d \setminus E_{n_k}$, which forces
\[B\left(x_{n_k}, \frac{\delta}{3}\right) \subset E_{n_k} \text{ and }d\left(x_{n_k}, E_0\right) > \delta,\text{ so }B\left(x_{n_k}, \frac{\delta}{3}\right) \subset E_{n_k}\setminus E_0,\]
a contradiction with $|E_{n} \Delta E_0| \to 0$. The other case is dealt with similarly.
\end{proof}

\begin{proposition}\label{prop:hausineq}Let $E,F \subset \R^d$. Then
\begin{equation}\label{eq:hausineq}\dh(\partial E, \partial F) \ls \max \left(\dh(E, F), \dh(E^c, F^c)\right).\end{equation}
\end{proposition}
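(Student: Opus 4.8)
The inequality is symmetric under swapping $(E,F)$ with $(F,E)$, which also swaps $(E^c,F^c)$ with $(F^c,E^c)$ and leaves the right-hand side unchanged. So, writing $M := \max(\dh(E,F),\dh(E^c,F^c))$ and assuming $M<+\infty$ (otherwise there is nothing to prove), it is enough to show that every $x \in \partial E$ satisfies $\dist(x,\partial F) \ls M$. I would argue this by contradiction, the only structural input being that a point of $\partial E$ lies in the closure of both $E$ and $E^c$.

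Suppose some $x \in \partial E$ has $\dist(x,\partial F) > M$, and fix $\rho$ with $M < \rho < \dist(x,\partial F)$. Then $B(x,\rho)$ does not meet $\partial F$; since $\R^d \setminus \partial F = \accentset{\circ}{F} \cup (\R^d\setminus\overline F)$ is a union of two disjoint open sets and $B(x,\rho)$ is connected, the ball lies entirely in one of them. If $B(x,\rho)\subset\accentset{\circ}{F}\subset F$: because $x\in\partial E\subset\overline{E^c}$, for any small $\eps>0$ there is $y\in E^c$ with $|x-y|<\eps$, and then $\dist(y,F^c)\ls\dh(E^c,F^c)\ls M$ produces some $z\in F^c$ with $|y-z|<M+\eps$; hence $|x-z|<M+2\eps<\rho$ once $\eps$ is chosen small enough, so $z\in B(x,\rho)\subset F$, contradicting $z\in F^c$. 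If instead $B(x,\rho)\subset\R^d\setminus\overline F\subset F^c$: now using $x\in\overline E$ we pick $y\in E$ with $|x-y|<\eps$, and $\dist(y,F)\ls\dh(E,F)\ls M$ yields $z\in F$ with $|x-z|<M+2\eps<\rho$, so $z\in B(x,\rho)\subset F^c$ — again a contradiction. In both cases the assumption is impossible, so $\dist(x,\partial F)\ls M$, and the proposition follows.

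This is essentially a triangle-inequality estimate, so I do not expect any real obstacle. The one conceptual point is that a point of the topological boundary $\partial E$ is simultaneously adherent to $E$ and to $E^c$ (this remains true for the measure-theoretic boundary $\supp D1_E$ used elsewhere in the paper, since then every ball about such a point meets both $E$ and $E^c$ in positive measure, hence in particular is nonempty). The remaining points needing a little care are purely bookkeeping: keeping the infima defining $\dist(\cdot,\cdot)$ strict by inserting the slack $\eps$, and the elementary topological fact that a connected set disjoint from $\partial F$ must stay on a single side of $F$.
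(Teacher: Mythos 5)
Your proof is correct and follows essentially the same contradiction argument as the paper: take a point $x\in\partial E$ with $\dist(x,\partial F)$ too large, exploit that $x$ is adherent to both $E$ and $E^c$, and split cases according to whether the neighborhood of $x$ falls on the inside or the outside of $F$. The only difference is cosmetic: the paper splits on $x\in F$ versus $x\in F^c$ directly, whereas you invoke connectedness of $B(x,\rho)$ inside $\R^d\setminus\partial F=\accentset{\circ}{F}\cup(\R^d\setminus\overline F)$ and insert an explicit $\eps$-slack for the infima, which makes two small steps the paper glosses over fully rigorous.
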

\begin{proof}
We use the characterization (often used as definition of $\dh$, see for example  \cite[Sec.~4.C]{RocWet98})
\begin{equation}\label{eq:hausdorffdilation}\dh(A, B)=\inf \left\{ r\gs0 \,\middle\vert\, A\subset U_rB \text{ and }B\subset U_rA \right\},\end{equation}
for the dilations $U_rA= \{\dist(\cdot, A)\ls r\}$.
Now, if the inequality to be proved failed, denoting 
\[r=\max \left(\dh(E, F), \dh(E^c, F^c)\right)\] 
we would have that either $U_r\partial F \setminus \partial E \neq \emptyset$ or $U_r\partial E \setminus \partial F \neq \emptyset$. Without loss of generality assume that the first case holds, so that there is $x \in \partial E$ for which $\dist(x, \partial F)>r$. If $x \in F^c$, by the properties of the boundary we can produce $\hat x \in E \setminus F$ with $\dist(\hat x, \partial F)>r$, which since $\hat x \in F^c$ also implies 
\[\dist(\hat x, F)=\dist(\hat x, \partial F)>r,\] 
contradicting $r \gs \dh(E, F)$. Similarly, if $x \in \{\dist(\cdot, \partial F)>r\}\cap F$, we can find $\check x \in E^c$ with $\dist(\check x, \partial F)>r$ as well, and as before since $\check x\in F \setminus E$ we have 
\[\dist(\check x, F^c)=\dist(\check x, \partial F)>r,\]
a contradiction with $r \gs \dh(E^c, F^c)$.
\end{proof}

Combining Propositions \ref{prop:bdy_implies_set} and \ref{prop:hausineq} we obtain

\begin{theorem}\label{thm:hausdorffequiv}
Assume that $\{E_n\}_n, E_0$ are subsets of $\R^d$ such that $|E_n \Delta E_0| \to 0$. Then $\dh(\partial E_n, \partial E_0) \to 0$ if and only if $\dh(E_n, E_0) \to 0$ and $\dh(E_n^c, E_0^c) \to 0$ simultaneously.
\end{theorem}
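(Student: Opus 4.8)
This is an immediate consequence of the two preceding propositions once one observes the elementary set-theoretic identities relating a set, its complement, and their boundaries; there is essentially no new work, so the plan is just to assemble the pieces in the right order and in both directions of the equivalence. The only point requiring a moment's care is that the statement involves the boundary of the \emph{complement}, which must be identified with $\partial E_n$ under the representative convention fixed in Subsection~\ref{subsec:preliminaries}.

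\textbf{Step 1: complement identities.} First I would record that for any $E\subset\R^d$ one has $E^c\Delta F^c = E\Delta F$, so the hypothesis $|E_n\Delta E_0|\to 0$ is equivalent to $|E_n^c\Delta E_0^c|\to 0$. Second, with the chosen representative for which $\partial E=\supp D1_E$, and since $D1_{E^c}=-D1_E$, we have $\partial(E^c)=\supp D1_{E^c}=\supp D1_E=\partial E$. These two observations are what let us feed the complements into Proposition~\ref{prop:bdy_implies_set} and conclude from Proposition~\ref{prop:hausineq}.

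\textbf{Step 2: the ``if'' direction.} Assume $\dh(E_n,E_0)\to 0$ and $\dh(E_n^c,E_0^c)\to 0$. Then Proposition~\ref{prop:hausineq} applied to $E=E_n$, $F=E_0$ gives
\[
\dh(\partial E_n,\partial E_0)\ls \max\bigl(\dh(E_n,E_0),\,\dh(E_n^c,E_0^c)\bigr)\to 0.
\]
Note this direction does not even use $|E_n\Delta E_0|\to 0$.

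\textbf{Step 3: the ``only if'' direction.} Assume $\dh(\partial E_n,\partial E_0)\to 0$. Together with $|E_n\Delta E_0|\to 0$, Proposition~\ref{prop:bdy_implies_set} yields $\dh(E_n,E_0)\to 0$. For the complements, by Step~1 we have $\partial E_n^c=\partial E_n$ and $\partial E_0^c=\partial E_0$, so $\dh(\partial E_n^c,\partial E_0^c)=\dh(\partial E_n,\partial E_0)\to 0$, and also $|E_n^c\Delta E_0^c|=|E_n\Delta E_0|\to 0$; hence Proposition~\ref{prop:bdy_implies_set} applied to $\{E_n^c\}_n,E_0^c$ gives $\dh(E_n^c,E_0^c)\to 0$, completing the equivalence.

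\textbf{Main obstacle.} There is no real obstacle; the only thing one has to be slightly careful about is not to conflate the topological boundary with a measure-theoretic one, since (as remarked after the definition of $\partial E$) $|\partial E|$ may be positive — but the identity $\partial(E^c)=\partial E$ holds for the topological boundary regardless, so the argument goes through verbatim.
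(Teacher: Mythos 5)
Your proof is correct and takes essentially the same route as the paper, which simply says "combining Propositions~\ref{prop:bdy_implies_set} and~\ref{prop:hausineq} we obtain" and leaves the bookkeeping (in particular, the identities $\partial E^c=\partial E$ and $|E_n^c\Delta E_0^c|=|E_n\Delta E_0|$ needed to feed the complements into Proposition~\ref{prop:bdy_implies_set}) to the reader. You have spelled out exactly those details, and correctly noted that the ``if'' direction does not use the $L^1$ hypothesis.
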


We can conclude Hausdorff convergence of the boundaries without the need of derivatives, by using both density estimates:

\begin{theorem}\label{thm:bdyhausdorff}Assume $\{E_n\}_n, E_0$ are subsets of $\R^d$ satisfying uniform density estimates with some scale $r_0$ and constant $C$, and such that $|E_n \Delta E_0| \to 0$. Then
\begin{equation*}\dh(\partial E_n, \partial E_0) \to 0.\end{equation*}
\end{theorem}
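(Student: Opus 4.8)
The plan is to deduce Theorem \ref{thm:bdyhausdorff} directly from Theorem \ref{thm:hausdorffequiv} together with Proposition \ref{prop:hausdorff_of_sets}, so essentially no new work is required. By Theorem \ref{thm:hausdorffequiv}, since we already assume $|E_n \Delta E_0| \to 0$, it suffices to establish the two set-level Hausdorff convergences $\dh(E_n, E_0) \to 0$ and $\dh(E_n^c, E_0^c) \to 0$ separately.

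For the first, I would simply invoke Proposition \ref{prop:hausdorff_of_sets}: the family $\{E_n\}_n, E_0$ is assumed to satisfy uniform density estimates, which in particular includes the uniform \emph{inner} density estimates \eqref{eq:innerdensity} at scale $r_0$ with constant $C$, and we are given $|E_n \Delta E_0| \to 0$; hence $\dh(E_n, E_0) \to 0$. For the second convergence, the key observation is that the uniform \emph{outer} density estimates \eqref{eq:outerdensity} for the family $\{E_n\}_n, E_0$ are exactly the uniform \emph{inner} density estimates for the family of complements $\{E_n^c\}_n, E_0^c$, because $B(x,r)\setminus E = B(x,r)\cap E^c$ and, by the symmetry of the density characterization of $\partial E$ recalled in the preliminaries, $\partial E = \partial(E^c)$. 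One small point to check is that Proposition \ref{prop:hausdorff_of_sets} is stated for families of uniformly bounded measure, whereas the complements $E_n^c$ are unbounded; however, since all the $E_n$ and $E_0$ are contained in a fixed ball (they have uniformly bounded measure and, being level sets of compactly supported minimizers in the applications, lie in a common bounded region — or at worst one restricts the contradiction argument to a fixed ball), the contradiction argument in the proof of Proposition \ref{prop:hausdorff_of_sets} goes through verbatim for the complements, using that $|E_n^c \Delta E_0^c| = |E_n \Delta E_0| \to 0$. Applying Proposition \ref{prop:hausdorff_of_sets} (or its proof) to the complements then yields $\dh(E_n^c, E_0^c) \to 0$.

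With both $\dh(E_n, E_0) \to 0$ and $\dh(E_n^c, E_0^c) \to 0$ in hand, the ``if'' direction of Theorem \ref{thm:hausdorffequiv} immediately gives $\dh(\partial E_n, \partial E_0) \to 0$, which is the desired conclusion. Alternatively, one can bypass Theorem \ref{thm:hausdorffequiv} and apply Proposition \ref{prop:hausineq} directly: $\dh(\partial E_n, \partial E_0) \le \max(\dh(E_n, E_0), \dh(E_n^c, E_0^c)) \to 0$.

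The only genuine subtlety — and hence the ``main obstacle'' if one insists on full rigour — is the boundedness caveat: Proposition \ref{prop:hausdorff_of_sets} as stated requires uniformly bounded measure, so applying it to the unbounded complements requires either localizing to a fixed large ball containing all the relevant sets (legitimate since all Hausdorff distances in question are small, so supremizing points cannot escape to infinity once $|E_n\Delta E_0|$ is small), or re-running the short contradiction argument of Proposition \ref{prop:hausdorff_of_sets} with the roles of sets and complements swapped. Either way this is routine and the theorem follows.
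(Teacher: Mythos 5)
Your proof is correct and follows essentially the same route as the paper's: reduce via Proposition \ref{prop:hausineq} (equivalently Theorem \ref{thm:hausdorffequiv}) to the two set-level convergences, obtain each by applying Proposition \ref{prop:hausdorff_of_sets} to the sets and to their complements, using that taking complements swaps inner and outer density estimates and leaves the symmetric difference unchanged. Your remark about the ``uniformly bounded measure'' phrase in the definition is a fair observation the paper glosses over, but as you note the contradiction argument in Proposition \ref{prop:hausdorff_of_sets} never actually uses that bound, so the application to the unbounded complements is indeed harmless.
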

\begin{proof}
We notice that
\begin{equation*}\label{eq:symdiffcomp}\begin{aligned}E_n \Delta E_0 &= \left(E_n \setminus E_0\right) \cup \left(E_0 \setminus E_n\right) = \left(E_n \cap E_0^c\right) \cup \left(E_0 \cap E_n^c\right)\\
&= \left(E_0^c \setminus E_n^c\right) \cup \left(E_n^c \setminus E_0^c\right) = E_o^c \Delta E_n^c,
\end{aligned}\end{equation*}
and that by taking complements the roles of \eqref{eq:innerdensity} and \eqref{eq:outerdensity} are reversed. Therefore, using both we can apply Proposition \ref{prop:hausdorff_of_sets} for $E_n$ and for $E_n^c$ so that $\dh(E_n, E_0) \to 0$ and $\dh(E_n^c, E_0^c)\to 0$. Proposition \ref{prop:hausineq} gives then the conclusion.
\end{proof}

As a direct consequence we get the following result, proved but not explicitly stated in \cite{ChaDuvPeyPoo17}, which also applies to the cases treated in \cite{IglMerSch18, IglMer20}:
\begin{proposition}\label{prop:hausdorff_of_bdy}Assume we have $\{E_n\}_n, E_0$ finite perimeter sets satisfying uniform density estimates with some scale $r_0$ and constant $C$, and such that the characteristic functions $1_{E_n} \stackrel{\ast}{\rightharpoonup} 1_E$ in $\BV$. Then $\dh(\partial E_n, \partial E_0) \to 0$.
\end{proposition}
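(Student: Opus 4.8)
The plan is to read the conclusion off from Theorem~\ref{thm:bdyhausdorff}: that statement already delivers $\dh(\partial E_n,\partial E_0)\to 0$ from the combination of uniform density estimates (both inner and outer) and $|E_n\Delta E_0|\to 0$, and uniform density estimates for the family $\{E_n\}_n\cup\{E_0\}$ are assumed here. Hence the entire content of the proof is to extract the convergence $|E_n\Delta E_0|\to 0$ from the hypothesis $1_{E_n}\wksto 1_{E_0}$ in $\BV(\R^d)$.

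For this I would simply unwind the definition of weak-$*$ convergence in $\BV$. Applied to the functions $1_{E_n}$ and $1_{E_0}$ — which are genuinely elements of $L^1(\R^d)$, the family being of uniformly bounded measure by the definition of uniform density estimates — it asserts precisely that $1_{E_n}\to 1_{E_0}$ strongly in $L^1(\R^d)$ (together with $D1_{E_n}\wksto D1_{E_0}$ as Radon measures, which is not needed here). Consequently $|E_n\Delta E_0|=\|1_{E_n}-1_{E_0}\|_{L^1(\R^d)}\to 0$, and applying Theorem~\ref{thm:bdyhausdorff} to the family $\{E_n\}_n\cup\{E_0\}$ with the given scale $r_0$ and constant $C$ concludes the argument.

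The one genuinely delicate point — and the step I would be careful to spell out — is that it is essential that weak-$*$ convergence in $\BV(\R^d)$ here be understood as including strong $L^1(\R^d)$ convergence of the functions, not merely $L^1_\loc(\R^d)$ convergence. Indeed, a sequence of unit balls $E_n=B(x_n,1)$ with $|x_n|\to\infty$ satisfies uniform density estimates and has $1_{E_n}\to 0$ in $L^1_\loc$ with derivatives tending weakly-$*$ to $0$, yet $\partial E_n$ escapes to infinity and $\dh(\partial E_n,\emptyset)\not\to 0$; so the distinction is not cosmetic. Under the $L^1(\R^d)$ convention escape of mass is automatically excluded and nothing further is required. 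If instead one wished to work only with local convergence of the sets, one would have to rule out escaping mass by hand, using the uniform inner density estimate \eqref{eq:innerdensity} together with the bound $|E_n|\le M$: any portion of $E_n$ lying outside a large ball forces, by connectedness of the complement of the ball in $\R^d$, a boundary point of $E_n$ there, on which \eqref{eq:innerdensity} deposits a fixed amount of measure, leading to a contradiction after accounting for the finitely many such pieces allowed by the mass bound.
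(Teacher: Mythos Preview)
Your proposal is correct and matches the paper's approach: the paper simply states Proposition~\ref{prop:hausdorff_of_bdy} as a ``direct consequence'' of Theorem~\ref{thm:bdyhausdorff} without spelling out any details, so the only missing step is precisely the one you identify, namely extracting $|E_n\Delta E_0|\to 0$ from the weak-$*$ $\BV$ convergence. Your added discussion of the $L^1$ versus $L^1_\loc$ subtlety (and the escaping-balls counterexample) is a genuine clarification that the paper does not address.
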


\section{A few results on variational mean curvatures}\label{sec:curvatures}

We now turn our attention to the weak notion of mean curvature for boundaries which will be our main tool to describe the behaviour of level sets of minimizers of \eqref{eq:ROFpsi}:

\begin{definition}
We say that a set $A \subset \R^d$ has a variational mean curvature $\kappa: \R^d \to \R$ if it minimizes, among $E \subset \R^d$, the functional
\begin{equation}E \mapsto \per(E) - \int_E \kappa.\label{eq:varCurvE}\end{equation}
\end{definition}

If the set $A$ has a smooth boundary and $\kappa$ is continuous, this minimization property implies that the restriction of $\kappa$ to the boundary $\partial A$ is, up to a multiplicative factor, the usual mean curvature of $\partial A$. To see this, just notice \cite[Rem.~17.6]{Mag12} that if $\partial A$ is $\mathcal{C}^2$, the first variation of the perimeter along the flow generated by a vector field $V \in \mathcal{C}_c^\infty(\R^d ;\R^d)$ is
\begin{equation}\label{eq:intbypartsH}\int_{\partial A} \div_{\partial A} V \dd\mathcal{H}^{d-1} = \int_{\partial A} (d-1)H_A \, V \!\cdot \nu_A \dd\mathcal{H}^{d-1},\end{equation}
for $\div_{\partial A}$ the surface divergence, $\nu_A$ the outward normal vector and $H_A$ the usual mean curvature of $\partial A$, while that of the integral term in \eqref{eq:varCurvE} for continuous $\kappa$ amounts to
\[-\int_{\partial A} \kappa \, V \!\cdot \nu_A \dd\mathcal{H}^{d-1},\]
from which we conclude by noticing that $A$ is a minimizer of \eqref{eq:varCurvE} and $T$ is arbitrary. Analogously, if we had $u \in \mathcal{C}^2$ a minimizer of \eqref{eq:ROFpsi} with $w=0$ and $f$ continuous, using the implicit function theorem and Proposition \ref{prop:slicedROFpsi}, we would find for $E^s := \{u>s\}$ that $H_{E^s}=-\psi'(s-f)/\alpha$. If additionally $\nabla u(x) \neq 0$ for all, $x$ taking the first variation of $\TV(u)$, which under this assumption is differentiable and equals $\int |\nabla u|$, leads to
\[-\frac{1}{\alpha}\psi'\big(u(x)-f(x)\big)=(d-1)H_{E^s}(x)=\div\left(\frac{\nabla u(x)}{|\nabla u(x)|}\right) \text{ for }x \in \partial E^s, \text{ so }u(x)=s.\]
We recall that there is a natural weak notion of mean curvature based on \eqref{eq:intbypartsH}, the \emph{distributional} mean curvature, which can be defined not just for boundaries of finite perimeter sets but also for most notions of non-regular surfaces (e.g. varifolds). The distributional and variational mean curvatures coincide in the very regular case just described, but it is not quite clear whether they do on less regular cases where both are available; some positive results are given in \cite{BarGonMas03}.

From the definition one sees that variational mean curvatures for a given set, as functions defined in $\R^d$, contain ``too much information" and one can not expect them to be unique. In fact, if $\kappa$ is a variational curvature for $A$, any other function $\kappa'$ with $\kappa' \gs \kappa$ on $A$ and $\kappa' \ls \kappa$ on $\R^d \setminus A$ is another variational mean curvature for $A$ as well.

\begin{remark}Using the coarea and layer-cake formulas as for Proposition \ref{prop:slicedROFpsi}, it is straightforward to check that if we have $v \in \partial\TV(f)$ for some $f \in L^{d/(d-1)}$ and $v \in L^d$, almost all of the upper level sets of $f$ at positive values are minimizers of \eqref{eq:varCurvE} with $\kappa = v$, making $v$ a variational curvature for all of them. For negative values, one switches to lower level sets and the curvature sign to $-v$, cf. Remark \ref{rem:signs}.\end{remark}

Being a minimizer of \eqref{eq:varCurvE} is stable by intersection and union, which in particular enables speaking about maximal and minimal minimizers:

\begin{proposition}\label{prop:cupcapofmin}
Let $E_1$ and $E_2$ be two minimizers of \eqref{eq:varCurvE}. Then $E_1 \cap E_2$ and $E_1 \cup E_2$ are also minimizers of \eqref{eq:varCurvE}. In particular if one has at least one minimizer (possibly with inclusion constraints) of \eqref{eq:varCurvE}, then there are also a maximal minimizer and a minimal one with respect to inclusion.
\end{proposition}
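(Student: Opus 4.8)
The plan is to exploit the submodularity of the perimeter, inequality~\eqref{eq:subaddper}, together with the trivial additivity of the volume integral over disjoint pieces, namely that for any measurable $E_1,E_2$ one has $\int_{E_1\cap E_2}\kappa + \int_{E_1\cup E_2}\kappa = \int_{E_1}\kappa + \int_{E_2}\kappa$. Writing $\mathcal F(E) = \per(E) - \int_E \kappa$ for the functional in~\eqref{eq:varCurvE}, adding these two relations gives the submodularity-type inequality
\begin{equation*}\mathcal F(E_1\cap E_2) + \mathcal F(E_1 \cup E_2) \ls \mathcal F(E_1) + \mathcal F(E_2).\end{equation*}
If $E_1$ and $E_2$ are both minimizers, let $m = \inf_E \mathcal F(E)$ be the minimal value (finite, since $\mathcal F(\emptyset)=0$ and the minimizers realize it). Then $\mathcal F(E_1)+\mathcal F(E_2) = 2m$, while by minimality $\mathcal F(E_1\cap E_2) \gs m$ and $\mathcal F(E_1\cup E_2)\gs m$; combined with the displayed inequality this forces $\mathcal F(E_1\cap E_2) = \mathcal F(E_1\cup E_2) = m$, so both $E_1\cap E_2$ and $E_1\cup E_2$ are minimizers. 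One small point to check is that $E_1\cap E_2$ and $E_1\cup E_2$ still have finite perimeter, which is exactly the content of~\eqref{eq:subaddper} applied to the finite-perimeter sets $E_1,E_2$, and finite mass is clear since $|E_1\cap E_2|,|E_1\cup E_2| \ls |E_1|+|E_2| < \infty$.

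For the second assertion, once the class of minimizers is closed under pairwise union and intersection, I would construct the maximal minimizer as follows. Let $M = \sup\{|E| : E \text{ a minimizer}\}$; this is finite because any minimizer $E$ satisfies $\per(E) \ls \int_E \kappa + \per(\emptyset) = \int_E\kappa$, and combined with the isoperimetric inequality~\eqref{eq:isoper} and, say, integrability of $\kappa$ one bounds $|E|$ uniformly — but in fact the cleanest route is to take an increasing sequence: pick minimizers $E_k$ with $|E_k| \to M$, replace $E_k$ by $E_1 \cup \cdots \cup E_k$ (still a minimizer, still with measure tending to $M$, and now nested increasing), and set $E_\infty = \bigcup_k E_k$. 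Then $1_{E_k} \to 1_{E_\infty}$ in $L^1_\loc$ (monotone convergence, using $|E_\infty| = M < \infty$), so by lower semicontinuity of the perimeter and, for the volume term, dominated convergence if $\kappa\in L^1$ or monotone convergence if $\kappa \gs 0$ — one gets $\mathcal F(E_\infty) \ls \liminf \mathcal F(E_k) = m$, hence $E_\infty$ is a minimizer with $|E_\infty| = M$. It is maximal up to null sets: if $E'$ is any minimizer, then $E_\infty \cup E'$ is a minimizer of measure $\gs M = |E_\infty|$, forcing $|E' \setminus E_\infty| = 0$. The minimal minimizer is obtained symmetrically, taking intersections and the infimum of the measures, or by passing to complements inside the complement of a fixed large ball. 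When inclusion constraints $E_- \subset E \subset E_+$ are imposed, the same argument applies verbatim since such constraints are also preserved under finite unions and intersections, and the limit $E_\infty$ inherits them.

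The main obstacle I anticipate is purely a matter of hypotheses rather than of mathematical difficulty: the argument for the maximal/minimal minimizer needs enough integrability or sign control on $\kappa$ to pass to the limit in $\int_{E_k}\kappa$ along the monotone sequence and to know $M < \infty$. In the applications in this paper $\kappa$ is built to lie in $L^d(\R^d)$ (a variational curvature coming from a subgradient), which together with the perimeter bound and the Sobolev/isoperimetric inequality~\eqref{eq:isoper} does give $|E| < \infty$ uniformly over minimizers and makes the volume term continuous along $L^1$-convergent bounded sequences by dominated convergence; so the statement as used is fine. If one wants the cleanest statement I would simply phrase the existence of extremal minimizers under the standing assumption that at least one minimizer exists and $\kappa \in L^1_\loc$ with the competitors restricted to finite mass, which is precisely the setting of~\eqref{eq:varCurvE} as used here.
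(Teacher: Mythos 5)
Your argument for closure under union and intersection is essentially the paper's: you use the submodularity of the perimeter \eqref{eq:subaddper} together with exact additivity of the volume term, and then minimality of $E_1$, $E_2$ forces all inequalities to be equalities. The paper phrases it as summing the two minimality inequalities to obtain the reverse of \eqref{eq:subaddper}, but that is the same computation. Where you genuinely go beyond the paper is in the second assertion (existence of maximal and minimal minimizers), which the paper merely states; your monotone-sequence construction with lower semicontinuity of the perimeter and dominated convergence for the volume term is a correct way to justify it, and you rightly flag that it needs $\kappa\in L^1$ and restriction to finite-mass competitors to guarantee $M<\infty$, which are exactly the standing hypotheses in the paper's use of this result.
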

\begin{proof}
 One can write, using the minimality of $E_1$ and $E_2$ and noting that $E_1 \cap E_2$ as well as $E_1 \cup E_2$ are admissible for that problem,
 \begin{equation}\label{eq:capopt} \per(E_1 \cap E_2) - \int_{E_1 \cap E_2} \kappa \gs \per(E_1) - \int_{E_1} \kappa,\end{equation}
 and
  \begin{equation}\label{eq:cupopt} \per(E_1 \cup E_2) - \int_{E_1 \cup E_2} \kappa \gs \per(E_2) - \int_{E_2} \kappa. \end{equation}
Summing these inequalities and noticing that the volume terms exactly compensate, we obtain
\begin{equation}\label{eq:reversefundamental}\per(E_1 \cap E_2) + \per(E_1 \cup E_2) \gs \per(E_1) + \per(E_2).\end{equation}
Now, if either of the inequalities \eqref{eq:capopt} or \eqref{eq:cupopt} were strict, we would also have a strict inequality in \eqref{eq:reversefundamental}. But this would be a contradiction with \eqref{eq:subaddper}, so all of these inequalities must be equalities.
\end{proof}

We make extensive use of the following basic but fundamental comparison lemma for variational mean curvatures:

\begin{lemma}\label{lem:basiccomp}Assume that the finite perimeter sets $E_1$ and $E_2$ admit variational mean curvatures $\kappa_1$ and $\kappa_2$ respectively, and such that $\kappa_1 < \kappa_2$ in $E_1 \setminus E_2$. Then $|E_1\setminus E_2|=0$, that is $E_1 \subseteq E_2$ up to Lebesgue measure zero.
\end{lemma}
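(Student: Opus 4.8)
The plan is to run a ``crossing'' comparison argument: play the two minimality properties against each other using the competitors $E_1 \cap E_2$ and $E_1 \cup E_2$, in the spirit of the proof of Proposition~\ref{prop:cupcapofmin}, but this time tracking the volume terms since the curvatures differ. First I would note that $E_1 \cap E_2$ and $E_1 \cup E_2$ have finite perimeter by the submodularity inequality~\eqref{eq:subaddper}, hence are admissible competitors in~\eqref{eq:varCurvE}. Using that $E_1$ minimizes $E \mapsto \per(E) - \int_E \kappa_1$ and testing against $E_1\cap E_2$, and that $E_2$ minimizes $E\mapsto \per(E) - \int_E \kappa_2$ and testing against $E_1 \cup E_2$, gives
\[\per(E_1) - \int_{E_1}\kappa_1 \ls \per(E_1 \cap E_2) - \int_{E_1 \cap E_2}\kappa_1, \qquad \per(E_2) - \int_{E_2}\kappa_2 \ls \per(E_1 \cup E_2) - \int_{E_1 \cup E_2}\kappa_2.\]

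Next I would add these two inequalities and cancel the perimeter terms using~\eqref{eq:subaddper} (exactly as in the passage from~\eqref{eq:capopt}--\eqref{eq:cupopt} to~\eqref{eq:reversefundamental}), arriving at
\[\int_{E_1\cap E_2}\kappa_1 + \int_{E_1 \cup E_2}\kappa_2 \ls \int_{E_1}\kappa_1 + \int_{E_2}\kappa_2.\]
Then, writing $\int_{E_1\cap E_2}\kappa_1 = \int_{E_1}\kappa_1 - \int_{E_1 \setminus E_2}\kappa_1$ and $\int_{E_1\cup E_2}\kappa_2 = \int_{E_2}\kappa_2 + \int_{E_1 \setminus E_2}\kappa_2$ and cancelling the common terms $\int_{E_1}\kappa_1$ and $\int_{E_2}\kappa_2$ leaves
\[\int_{E_1 \setminus E_2}\left(\kappa_2 - \kappa_1\right) \ls 0.\]
Since $\kappa_2 - \kappa_1 > 0$ pointwise on $E_1 \setminus E_2$ by hypothesis, this forces $|E_1 \setminus E_2| = 0$, which is the claim.

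The argument is essentially routine and I do not expect a genuine obstacle; the only point to be careful about is that every integral above be well defined and finite, so that the splitting of integrals over disjoint pieces and the cancellations are legitimate. This is ensured by the integrability built into the notion of variational mean curvature together with the finiteness of the relevant measures — in fact one only needs $\int_{E_1 \setminus E_2}\kappa_1$ and $\int_{E_1\setminus E_2}\kappa_2$ to be meaningful individually. The decisive use of the hypothesis is the final step: the strict inequality $\kappa_1 < \kappa_2$ on $E_1 \setminus E_2$ is precisely what upgrades the non-positivity of $\int_{E_1\setminus E_2}(\kappa_2-\kappa_1)$ to the vanishing of $|E_1 \setminus E_2|$.
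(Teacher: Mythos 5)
Your argument is correct and is exactly the paper's proof: test $E_1$ against $E_1\cap E_2$, test $E_2$ against $E_1\cup E_2$, add the two inequalities, cancel the perimeter terms via submodularity~\eqref{eq:subaddper}, and reduce to $\int_{E_1\setminus E_2}(\kappa_2-\kappa_1)\ls 0$, which together with the strict pointwise inequality forces $|E_1\setminus E_2|=0$. Your remark on the legitimacy of splitting and cancelling the integrals is a reasonable extra precaution, but adds nothing beyond what the paper takes for granted.
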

\begin{proof}
We can write
\[\per(E_1) - \int_{E_1}\kappa_1 \ls \per(E_1 \cap E_2) - \int_{E_1 \cap E_2} \kappa_1,\]
\[\per(E_2) - \int_{E_2}\kappa_2 \ls \per(E_1 \cup E_2) - \int_{E_1 \cup E_2} \kappa_2.\]
Summing and using \eqref{eq:subaddper}, we arrive at 
\[\int_{E_1 \setminus E_2} \kappa_2 \ls \int_{E_1 \setminus E_2} \kappa_1,\]
which implies the result.
\end{proof}

We will repeatedly use the previous lemma to compare with balls:

\begin{example}\label{ex:curvball}
For $x_0 \in \R^d$ and $r>0$, any function $v_{B(x_0, r)} \in L^1(\R^d)$ with
\begin{equation}\begin{gathered}v_{B(x_0, r)}=\frac{d}{r} \text{ in }B(x_0, r),\ \, v_{B(x_0, r)} < 0 \text{ in }\R^d\setminus B(x_0, r),\text{ and }\\ \int_{\R^d\setminus B(x_0, r)} v_{B(x_0, r)} = -\per(B(x_0, r))\label{eq:intoutsideper}\end{gathered}\end{equation}
is a variational mean curvature for $B(x_0,r)$. To check this, first we notice that
\[\per(B(x_0, r)) - \int_{B(x_0, r)} v_{B(x_0, r)}(x) \dd x = 0.\]
Moreover, for any other finite perimeter set with $|E| < \infty$, we have by the isoperimetric inequality \eqref{eq:isoper} that for arbitrary $y \in \R^d$
\[\per\left(B\left(y, r_E\right)\right) \ls \per(E), \text{ with }r_E:=\left(\frac{|E|}{|B(0,1)|}\right)^{1/d},\]
and clearly $|B(y, r_E) \cap B(x_0, r)|$ is maximized by picking $y = x_0$. If $r_E > r$ then $\per(B(x_0, r_E)) > \per(B(x_0, r_0))$ but
\[\int_{B(x_0, r_E)} v_{B(x_0, r)}(x) \dd x < \int_{B(x_0, r)} v_{B(x_0, r)}(x) \dd x,\]
so $E$ could not be a minimizer. If $r_E \ls r$, then
\begin{align*}\per(B(x_0, r_E)) &= \left(\frac{r_E}{r}\right)^{d-1} \per(B(x_0, r)) = \left(\frac{r_E}{r}\right)^{d-1} \int_{B(x_0, r)} v_{B(x_0, r)}(x) \dd x \\
&= \frac{r}{r_E} \int_{B(x_0, r_E)} v_{B(x_0, r)}(x) \dd x \gs \int_{B(x_0, r_E)} v_{B(x_0, r)}(x) \dd x,\end{align*}
with equality if and only if $r_E = r$. The case in which $|\R^d \setminus E| < +\infty$, implying that $E$ must be of the form $\R^d \setminus B(y,\tilde{r}_E)$ for some $\tilde{r}_E >0$, is handled with similar computations once we notice that condition \eqref{eq:intoutsideper} prevents the full space $\R^d$ from having negative energy.
\end{example}

Furthermore, Lemma \ref{lem:basiccomp} combines with the strict convexity of $\psi$ to give a comparison principle for denoised solutions:

\begin{proposition}\label{prop:ROFpsicomparison}Let ${g} \ls {f}$ and $u^f_{\alpha,0}$, $u^g_{\alpha,0}$ be the corresponding minimizers of \eqref{eq:ROFpsi} with $w=0$. Then one has $u^{{g}}_{\alpha,0} \ls u^{{f}}_{\alpha,0}$.
\end{proposition}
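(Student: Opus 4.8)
The plan is to derive the pointwise inequality $u^g_{\alpha,0} \ls u^f_{\alpha,0}$ from inclusions between the level sets of the two minimizers, obtained through the comparison Lemma~\ref{lem:basiccomp} together with the strict convexity of $\psi$. Write $u^f := u^f_{\alpha,0}$ and $u^g := u^g_{\alpha,0}$. By Proposition~\ref{prop:slicedROFpsi}, for $s>0$ the set $E^s_f := \{u^f > s\}$ minimizes, among sets of finite mass, the functional $E \mapsto \per(E) + \tfrac{1}{\alpha}\int_E \psi'(s-f)$; since $f$ has bounded support and $\psi'(s)>0$ for $s>0$, the integrand is bounded below by a positive constant outside a bounded set, so sets of infinite mass carry infinite energy and $E^s_f$ is in fact a minimizer of~\eqref{eq:varCurvE} over all subsets of $\R^d$. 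In other words $\kappa^f_s := -\tfrac{1}{\alpha}\psi'(s-f)$ is a variational mean curvature for $E^s_f$, and likewise $\kappa^g_s := -\tfrac{1}{\alpha}\psi'(s-g)$ is one for $E^s_g := \{u^g > s\}$. For $s<0$ the analogous statement holds for the lower level sets $\{u^f < s\}$ and $\{u^g < s\}$ with variational curvatures $-\tfrac{1}{\alpha}\psi'(f-s)$ and $-\tfrac{1}{\alpha}\psi'(g-s)$, cf.\ Remark~\ref{rem:signs}.

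Next I would use that, since $\psi \in \mathcal{C}^1(\R)$ and $\psi$ is strictly convex, $\psi'$ is strictly increasing. Hence for any $t < s$ and any $x$ we have $s - g(x) \gs s - f(x) > t - f(x)$, so $\psi'(s-g(x)) > \psi'(t-f(x))$, that is $\kappa^g_s < \kappa^f_t$ everywhere on $\R^d$. Lemma~\ref{lem:basiccomp} then gives $|E^s_g \setminus E^t_f| = 0$ for every pair $t < s$ with $s > 0$. Taking the union over rational $t \in (0,s)$ and using $\{u^f \gs s\} = \bigcap_{0<t<s}\{u^f > t\}$, this yields $|E^s_g \setminus \{u^f \gs s\}| = 0$ for all $s > 0$. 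Symmetrically, applying the same reasoning to the lower level sets for $s < 0$ — where for $t > s$ one has $f - s > g - t$ everywhere, hence $-\tfrac1\alpha\psi'(f-s) < -\tfrac1\alpha\psi'(g-t)$ — produces $|\{u^f < s\} \setminus \{u^g \ls s\}| = 0$ for all $s < 0$.

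Finally I would assemble these inclusions: since $\{u^g > u^f\} = \bigcup_{s \in \mathbb{Q}}\left(\{u^g > s\} \cap \{u^f < s\}\right)$, it suffices to show each term is Lebesgue-null. For $s > 0$ the term equals $E^s_g \setminus \{u^f \gs s\}$; for $s < 0$ it equals $\{u^f < s\} \setminus \{u^g \ls s\}$; and for $s = 0$ it is covered by writing $\{u^g > 0\} = \bigcup_{s'>0}\{u^g > s'\}$ and invoking the $s'>0$ case. Hence $|\{u^g > u^f\}| = 0$, i.e.\ $u^g \ls u^f$ almost everywhere.

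I expect the curvature comparison itself to be entirely routine once the strict monotonicity of $\psi'$ is noted; the only genuine bookkeeping lies in the passage from the measure-zero inclusions of level sets to the pointwise inequality and in treating the threshold values $s \ls 0$ correctly. As an alternative that bypasses slicing altogether, one can instead plug $\max(u^f,u^g)$ and $\min(u^f,u^g)$ as competitors into the two minimization problems: submodularity of $\TV$ together with the pointwise inequality for $\psi$ following from $g \ls f$ forces both $\max(u^f,u^g)$ and $\min(u^f,u^g)$ to again be minimizers of the respective functionals, so they must coincide with $u^f$ and $u^g$ by the uniqueness in Proposition~\ref{prop:existconv}.
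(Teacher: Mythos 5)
Your main argument follows essentially the same path as the paper's: slice via Proposition~\ref{prop:slicedROFpsi}, compare the resulting variational curvatures with Lemma~\ref{lem:basiccomp} using strict monotonicity of $\psi'$, then pass from level-set inclusions to a pointwise inequality via a countable union. The only differences are cosmetic: you work with the open level sets $\{u>s\}$ directly (which is what Proposition~\ref{prop:slicedROFpsi} gives) rather than the closed $\{u\gs s\}$, you separate $s>0$, $s=0$, $s<0$ explicitly, and your decomposition $\{u^g>u^f\}=\bigcup_{s\in\mathbb{Q}}\big(\{u^g>s\}\cap\{u^f<s\}\big)$ handles the uncountability up front rather than with the paper's post-hoc identification $A=A_{\mathbb{Q}}$. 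All of this is correct; your justification that the level sets are unconstrained minimizers of \eqref{eq:varCurvE} (lifting the finite-mass restriction by noting $\psi'(s)>0$ off the compact support) is a detail the paper glosses over.

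The alternative you sketch at the end is a genuinely different proof and worth flagging: plug $\max(u^f,u^g)$ and $\min(u^f,u^g)$ into the $f$- and $g$-problems, sum, use submodularity $\TV(\max)+\TV(\min)\ls\TV(u^f)+\TV(u^g)$, and the pointwise four-point convexity inequality $\psi(a-g)+\psi(b-f)\ls\psi(b-g)+\psi(a-f)$ valid whenever $a\ls b$ and $g\ls f$ (a majorization consequence of convexity). This forces all inequalities to be equalities, whence $\min(u^f,u^g)=u^g$ by uniqueness. This route avoids slicing and the measure-theoretic bookkeeping entirely, at the cost of checking the pointwise $\psi$-inequality; it is the standard comparison-principle argument for functionals with submodular regularizers and is arguably more transparent. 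The paper's route, by contrast, fits naturally into its curvature-comparison framework and reuses Lemma~\ref{lem:basiccomp}, which is the tool the rest of the paper is built around.
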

\begin{proof}
To simplify the notation we drop the subindices that remain arbitrary, but fixed, in what follows. By Proposition \ref{prop:slicedROFpsi}, one can see that the level sets $\{u^f \gs s\}$ and $\{u^g \gs s\}$ are the maximal minimizers (with respect to inclusion) among $E$ of respectively
\[  \per(E) + \frac{1}{\alpha}\int_E \psi'(s-f), \enskip \text{and} \enskip \per(E) + \frac{1}{\alpha}\int_E \psi'(s-g).\]
Since $\psi$ is strictly convex, we then have, for $s' < s$,
\[\psi'(s-{g}) > \psi'(s'-{f}),\]
which implies by Lemma \ref{lem:basiccomp} that $|\{ u^g \gs {s}\} \setminus \{ u^f \gs {s'}\}|=0$. Since $s'<s$ was arbitrary and these sets are nested with respect to $s'$, we infer
\[|\{ u^g \gs s\} \setminus \{ u^f \gs s\}|=\left| \,{\bigcup_{n} \left(\{ u^g \gs s\} \setminus \Big\{ u^f \gs s-\frac{1}{n}\Big\}\right)}\right| = 0.\]
Denoting the set
\[A:=\bigcup_{s \in \R}\{ u^g \gs s\} \setminus \{ u^f \gs s\} = \bigcup_{s \in \R}\{ u^g \gs s\} \cap \{ u^f < s\},\]
we would like to see that $|A|=0$ so that $u^{{f}} \gs u^{{g}}$ almost everywhere. We cannot immediately conclude since the union is over an uncountable index set. To proceed, define
\[A_{\mathbb{Q}}:= \bigcup_{r \in \mathbb{Q}}\{u^g \gs r\} \cap \{u^f < r\}\]
with $|A_{\mathbb{Q}}|=0$, and let $x \in A \setminus A_{\mathbb{Q}}$. Then there is some $s_0 \in \R$ for which both $u^g(x) \gs s_0$ and $u^f(x) < s_0$ hold. However, for all $r \in \mathbb{Q}$ we have either $u^g(x) < r$ or $u^f(x) \gs r$. Let $\{r_n\}_n \subset \mathbb{Q}$ with $r_n < s_0$ and $r_n \to s_0$. If we had that $u^g(x)<r_n$ for some $n$, then $u^g(x)<r_n<s_0$, a contradiction. So we must have $u^f(x) \gs r_n$ for all $n$, implying that $u^f(x) \gs s_0$, which is again a contradiction. Therefore $A=A_{\mathbb{Q}}$.
\end{proof}

In consequence, we also have
\begin{corollary}\label{cor:nonewlevels}
If $g$ has values in $[a,b]$, then for every $\alpha$ also $u^g_{\alpha, 0}$ has values in $[a,b]$.
\end{corollary}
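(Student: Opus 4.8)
The plan is to deduce this from Proposition \ref{prop:ROFpsicomparison} by sandwiching $g$ between the two constant functions $a$ and $b$, and observing that for a constant data function the minimizer is that same constant. So the first step is: since $a \ls g \ls b$ pointwise, Proposition \ref{prop:ROFpsicomparison} gives $u^a_{\alpha,0} \ls u^g_{\alpha,0} \ls u^b_{\alpha,0}$ almost everywhere (applied twice, once to the pair $a \ls g$ and once to $g \ls b$). Here I am slightly abusing notation, writing $u^a_{\alpha,0}$ for the minimizer of \eqref{eq:ROFpsi} with data $f \equiv a$ and $w=0$; note however that $a$ need not have compact support, so strictly speaking one should either check that the existence/uniqueness argument of Proposition \ref{prop:existconv} still goes through (it does, since the relevant estimates \eqref{eq:fest}--\eqref{eq:sobolevineq} only used $\psi \gs 0$ and coercivity, not compact support of $f$), or — cleaner — replace $a$ and $b$ by the truncations $a\,1_{K}$ and $b\,1_{K}$ for a large ball $K \supset \supp g$ and let $K$ grow; I will take the first route and remark on this point.

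The second step is to identify $u^a_{\alpha,0}$ and $u^b_{\alpha,0}$. I claim $u^b_{\alpha,0} \equiv b$: indeed the constant function $b$ has $\TV(b)=0$ and makes the fidelity term $\int \psi(u-b)$ vanish, so it attains the value $0$ in \eqref{eq:ROFpsi}, which is the minimum since both terms are nonnegative; uniqueness (from strict convexity of $\psi$, Proposition \ref{prop:existconv}) then forces $u^b_{\alpha,0}=b$. The same argument gives $u^a_{\alpha,0}=a$. Combining with the first step, $a \ls u^g_{\alpha,0} \ls b$ a.e., i.e.\ $u^g_{\alpha,0}$ takes values in $[a,b]$, which is the assertion.

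I do not expect any real obstacle here; the statement is essentially a corollary of the comparison principle plus the trivial remark that constants are fixed points of the denoising map. The only genuinely delicate point is the one flagged above — that Proposition \ref{prop:ROFpsicomparison} and the underlying existence/uniqueness theory were stated for compactly supported data, whereas the constant comparison functions are not compactly supported — so the proof should spend a sentence confirming that the minimization problem \eqref{eq:ROFpsi} with constant data still has a (unique) solution and that the comparison argument of Proposition \ref{prop:ROFpsicomparison}, which only relied on Lemma \ref{lem:basiccomp} and strict convexity of $\psi$ applied levelset-by-levelset, is unaffected by dropping compact support of the data. Alternatively, one sidesteps this entirely by comparing against $a\,1_K \ls g \ls b\,1_K$ on a large ball $K$ containing $\supp g$ and passing to the limit $K \nearrow \R^d$, using that the minimizers depend monotonically on the data and the $L^1_\loc$ stability already established.
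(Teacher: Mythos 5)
Your proof takes essentially the same route as the paper's: compare with the constants $a$ and $b$ using Proposition~\ref{prop:ROFpsicomparison}, and note that constants are invariant under minimizing~\eqref{eq:ROFpsi}. The one delicate point you flag --- that the constant comparison functions are not compactly supported --- is exactly the point the paper addresses in its short proof, and the mechanism you ultimately need is the one the paper records: it suffices that $f - c_f \in L^{d/(d-1)}(\R^d)$ for some constant $c_f$, in which case the argument of Proposition~\ref{prop:existconv} returns a unique minimizer with the same property (the constant $c_k$ in the Sobolev inequality~\eqref{eq:sobolevineq} comes out as $c_f$ instead of $0$). Your parenthetical in route (a) is slightly imprecise --- compact support of $f$ \emph{was} used in Proposition~\ref{prop:existconv}, precisely to fix $c_k = 0$ --- but the more careful version you would write upon unpacking ``coercivity'' is the paper's observation, so the argument is sound. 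Your truncation alternative in route (b) is a valid workaround but is not needed; note that it implicitly requires $0 \in [a,b]$, which does hold since $g$ is compactly supported.
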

\begin{proof}
Notice that to consider the minimization of \eqref{eq:ROFpsi} we do not require that the data $f$ (with $w=0$) is in $L^{d/(d-1)}(\R^d)$. It is enough that there is a constant $c_f$ such that $f-c_f \in L^{d/(d-1)}(\R^d)$ and the corresponding solution will also have this property, see the usage of the Sobolev inequality \eqref{eq:sobolevineq} in the proof of Proposition \ref{prop:existconv}. This allows to compare with constant functions, which are invariant by minimizing \eqref{eq:ROFpsi}.
\end{proof}

\subsection{Construction of variational mean curvatures for bounded sets} 

A natural question is whether a variational mean curvature can be found for a given set. The following crucial result proven in \cite{BarGonTam87, Bar94} provides a positive answer:

\begin{theorem}\label{thm:optimalcurvature}
Let $D$ be a bounded set with finite perimeter. Then, $D$ has at least one variational mean curvature in $L^1(\R^d)$. In addition, there exists a variational mean curvature $\kappa_D$ for $D$ which minimizes the $L^p(D)$ norm for all $p>1$ among such curvatures. There might be $p>1$ for which this minimal norm is not finite.
\end{theorem}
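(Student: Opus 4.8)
The plan is to establish existence of an $L^1$ variational mean curvature first, and then obtain the optimal one by a direct minimization argument over the (nonempty) set of such curvatures.

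\textbf{Existence of an $L^1$ curvature.} First I would prove that $D$ admits \emph{some} variational mean curvature in $L^1(\R^d)$. The natural construction is to use a penalized/obstacle-type problem. For a parameter $\lambda > 0$ consider the functional $E \mapsto \per(E) + \lambda\,|E \Delta D|$ (or, what amounts to the same once one optimizes separately inside and outside $D$, the problem $E \mapsto \per(E) - \int_E \k_\lambda$ with $\k_\lambda := \lambda\,\sgn(1_D - 1/2)$, i.e. $\k_\lambda = \lambda$ on $D$ and $-\lambda$ outside). For $\lambda$ large this problem has $D$ itself as a minimizer provided $D$ minimizes $\per$ under a volume-type constraint at the relevant scale; more robustly, one invokes the known regularity/approximation results for sets of finite perimeter (this is exactly the content of \cite{BarGonTam87, Bar94}): there is a measurable function $\k_D \in L^1(\R^d)$ such that $D$ minimizes \eqref{eq:varCurvE} with $\kappa = \k_D$. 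I would simply cite \cite{BarGonTam87, Bar94} for this first assertion, since the theorem statement attributes it to those references; the point worth spelling out here is only how the optimal curvature is then extracted.

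\textbf{Extracting the optimal curvature.} Let $\mathcal{K}$ be the set of all variational mean curvatures of $D$ lying in $L^1(\R^d)$; by the previous step $\mathcal{K} \neq \emptyset$. The key structural observation is the one already noted in the excerpt: if $\k \in \mathcal{K}$ and $\k'$ satisfies $\k' \gs \k$ on $D$ and $\k' \ls \k$ on $\R^d \setminus D$, then $\k' $ is again a variational curvature for $D$. In particular, truncating any $\k \in \mathcal{K}$ from above on $\R^d\setminus D$ and from below on $D$ keeps it in $\mathcal{K}$, and combining two elements $\k_1, \k_2 \in \mathcal{K}$ via $\k_1 \wedge \k_2$ on $D$ extended by $\k_1 \vee \k_2$ on $\R^d\setminus D$ produces another element of $\mathcal{K}$ whose restriction to $D$ is the pointwise minimum. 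Thus $\{\restr{\k}{D} : \k \in \mathcal{K}\}$ is a lattice closed under pointwise minima of pairs, and to minimize $\|\restr{\k}{D}\|_{L^p(D)}$ it suffices to work with these restrictions. I would take $m := \inf\{\|\restr{\k}{D}\|_{L^p(D)} : \k \in \mathcal{K}\}$ (allowing $m = +\infty$, in which case the claim about non-finiteness is exactly this case, so assume $m < \infty$), pick a minimizing sequence $\k_n \in \mathcal{K}$, and — crucially, using the lattice property — replace it by the decreasing-on-$D$ sequence $\widetilde\k_n := \k_1 \wedge \dots \wedge \k_n$ on $D$, which still lies in $\mathcal{K}$ and still satisfies $\|\restr{\widetilde\k_n}{D}\|_{L^p(D)} \to m$ (the $L^p(D)$ norm can only decrease under the minimum). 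Then $\restr{\widetilde\k_n}{D}$ converges pointwise a.e.\ monotonically to some $\k_\infty$, and by monotone/dominated convergence $\k_\infty \in L^p(D)$ with $\|\k_\infty\|_{L^p(D)} = m$. It remains to promote $\restr{\widetilde\k_n}{D} \to \k_\infty$ to a statement about full variational curvatures: extend $\widetilde\k_n$ outside $D$ by the original $\k_1$ (legitimate by the truncation principle, as the extension only needs $\widetilde\k_n \ls \k_1$ outside $D$, which we can enforce) and pass to the limit in the minimality inequality $\per(D) - \int_D \widetilde\k_n \ls \per(E) - \int_E \widetilde\k_n$ for every competitor $E$ of finite measure, using the $L^1$ convergence on $D$ and the fixed tail on $D^c$. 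This shows $\k_D$, defined as $\k_\infty$ on $D$ and $\k_1$ on $D^c$, is a variational curvature realizing the minimal $L^p(D)$ norm.

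\textbf{Uniformity in $p$.} Finally I must argue the minimizer can be chosen independently of $p$. Here the monotone structure pays off once more: the restriction-to-$D$ lattice has a \emph{pointwise smallest element in the sense of essential infimum} among all $\restr{\k}{D}$ — namely $\k^\star := \operatorname{ess\,inf}_{\k \in \mathcal{K}} \restr{\k}{D}$, which is attained because $\mathcal{K}$ restricted to $D$ is closed under countable pointwise infima by the same monotone-convergence argument as above. This $\k^\star$ simultaneously minimizes every $L^p(D)$ norm, $p > 1$, since pointwise domination $\k^\star \ls \restr{\k}{D}$ from below is not automatic — the curvature restricted to $D$ need not be signed — so the honest statement is that one takes the smallest element of the lattice and checks it lies below every competitor \emph{after} the truncation normalization $\restr{\k}{D} \gs \restr{\k_D}{D}$ that the minimality principle permits; with that normalization $\k^\star$ is pointwise minimal, hence minimal in every $L^p(D)$. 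I expect the main obstacle to be precisely this last point: making rigorous that ``smallest on $D$'' is well-defined and attained, i.e.\ verifying the lattice $\{\restr{\k}{D}\}$ has an essential-infimum element inside it, which requires the countable version of the pairwise-minimum closure plus a measurable-selection/monotone-limit argument, and then checking that this single function beats every $p$ at once rather than having to re-minimize for each $p$. The $L^1$-existence input I would not reprove, deferring to \cite{BarGonTam87, Bar94}.
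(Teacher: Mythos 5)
The paper itself does not prove Theorem~\ref{thm:optimalcurvature}: it cites \cite{BarGonTam87, Bar94} for the full statement and then describes the construction of $\kappa_D$ (equations \eqref{eq:insideprob}--\eqref{eq:kappaminus}) in order to use it later, with Proposition~\ref{prop:kappawelldef} only checking well-posedness of that construction. Your plan to also cite those references for the $L^1$-existence is therefore consistent with what the paper does. However, your attempt to rederive the $L^p$-optimality by a direct minimization over the family $\mathcal{K}$ of $L^1$ variational curvatures has a genuine gap.

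The claim on which your whole argument rests is that $\mathcal{K}$ is closed under the operation taking $\kappa_1, \kappa_2 \in \mathcal{K}$ to $\kappa_1 \wedge \kappa_2$ on $D$ and $\kappa_1 \vee \kappa_2$ on $\R^d \setminus D$. You present this as a consequence of the truncation principle (``$\kappa' \gs \kappa$ on $D$, $\kappa' \ls \kappa$ on $\R^d \setminus D$ preserves the variational curvature property''), but it is in fact the \emph{opposite} inequality in both regions: the pointwise minimum is $\ls \kappa_i$ on $D$ and the pointwise maximum is $\gs \kappa_i$ on $\R^d \setminus D$. Tracing the inequality $\per(D) - \per(E) \ls \int_{D\setminus E}\kappa - \int_{E \setminus D}\kappa$ that must hold for every competitor $E$, one sees that replacing $\kappa_1$ by the proposed combination \emph{decreases} both $\int_{D\setminus E}\kappa$ and $-\int_{E\setminus D}\kappa$, so the inequality becomes harder to satisfy, not easier; nothing forces it to survive. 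The truncation principle only allows you to \emph{increase} the curvature on $D$ (and decrease it outside), which is useless for lowering the $L^p(D)$ norm, and your lattice closure is precisely where the real content of the theorem is hidden: if it held for free, the simultaneous $L^p$-optimality for all $p > 1$ would be automatic, which is exactly why it cannot be elementary. In fact, for the ball, $D$ is a minimizer with $\kappa = d$ on $D$ with equality against the empty set, so lowering $\kappa$ on any positive-measure subset of $D$ already breaks minimality; whether two admissible curvatures can lie strictly on opposite sides of some ``floor'' and thus falsify the lattice claim for a general $D$ is precisely what the nontrivial Barozzi--Gonzalez--Tamanini construction resolves, by building the optimal $\kappa_D$ through the monotone family $D^\lambda$ and proving $L^p$-minimality via a level-set/rearrangement argument, not a lattice argument. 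Your last paragraph shows you sense the difficulty, but it is not merely a matter of ``making the essential infimum rigorous''; the infimum may simply fail to be a variational curvature. To proceed along your route you would need an independent proof of the lattice property, or (more realistically) follow the cited construction.
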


The construction of $\kappa_D$ in \cite{BarGonTam87, Bar94} involves choosing a positive function $g \in L^1(\R^d)$ and minimizers of the problems
\begin{equation}\label{eq:insideprob}\min_{E \subset D} \,\per(E) - \lambda \int_E g, \text{ and }\end{equation}
\begin{equation}\label{eq:outsideprob}\min_{F \subset \R^d \setminus D} \,\per(F) - \lambda \int_F g.\end{equation}
Namely, for $x \in D$ one defines
\begin{equation}\label{eq:kappaplus}\kappa_D(x):=\inf\left\{\lambda g(x) \ \middle\vert \ \lambda >0 \text{ and } x \in E^\lambda, \text{ for }E^\lambda \text{ any minimizer of }\eqref{eq:insideprob}\right\},\end{equation}
and for $x \in \R^d \setminus D$
\begin{equation}\label{eq:kappaminus}\kappa_D(x):=-\inf\left\{ \lambda g(x) \ \middle\vert \ \lambda>0 \text{ and } x \in F^\lambda, \text{ for }F^\lambda \text{ any minimizer of }\eqref{eq:outsideprob}\right\}.\end{equation}
By definition $\kappa_D >0$ in $D$ and $\kappa_D < 0$ in $\R^d \setminus D$, consistent with the lack of uniqueness for variational mean curvatures described above. In later sections of this article we require a specific choice of $g$ made precise in Definition \ref{def:gR}. Moreover, the proof of Proposition \ref{prop:cupcapofmin} is also valid with inclusion constraints, so one can speak of maximal and minimal $E^\lambda$ and $F^\lambda$. For completeness, we check that $\kappa_D$ is well defined:

\begin{proposition}\label{prop:kappawelldef}
The problems \eqref{eq:insideprob} and \eqref{eq:outsideprob} admit at least one minimizer. Moreover if for every compact set $K \subset \R^d$ one can find $c_K$ such that 
\begin{equation}\label{eq:locallowerbound}g(x) \gs c_K >0 \text{ for a.e. }x \in K,\end{equation}
then for almost every $x \in D$, we have that $x \in E^{\lambda_x}$ for some $\lambda_x>0$ and $E^{\lambda_x}$ a minimizer of \eqref{eq:insideprob}, and similarly for a.e.~$x \in \R^d \setminus \overline D$ and a corresponding minimizer of \eqref{eq:outsideprob}.
\end{proposition}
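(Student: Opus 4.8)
The plan is to separate the two assertions. For \textbf{existence} I would use the direct method. In the inside problem \eqref{eq:insideprob} every competitor lies in the fixed bounded set $D$, so a minimizing sequence $E_k$ satisfies $\per(E_k) \ls \lambda \int_{E_k} g \ls \lambda \int_D g < +\infty$ (comparing with $E=\emptyset$); hence $\{1_{E_k}\}$ is bounded in $\BV(\R^d)$ with support in $D$, and by $\BV$ compactness \cite[Thm.~3.23]{AmbFusPal00} a subsequence converges in $L^1$ and a.e.\ to some $1_E$ with $E \subseteq D$; lower semicontinuity of the perimeter together with $\int_{E_k}g \to \int_E g$ (dominated convergence, $g\in L^1$) makes $E$ a minimizer. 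For the outside problem \eqref{eq:outsideprob} the competitors are no longer confined to a bounded region, so I would use the isoperimetric inequality \eqref{eq:isoper}: a minimizing sequence $F_k$ still has $\per(F_k) \ls \lambda\int g +1 =: M$ for large $k$, whence $\min(|F_k|,|\R^d\setminus F_k|) \ls (M/\Theta_d)^{d/(d-1)} =: V$. Passing to a subsequence along which a fixed one of the two alternatives holds, the corresponding indicatrices ($1_{F_k}$ or $1_{\R^d\setminus F_k}$) are bounded in $\BV$ with uniformly bounded mass, hence converge in $L^1_\loc$ and a.e.\ to an indicatrix; the constraint $F_k\subseteq\R^d\setminus D$ passes to the limit because $|D|<+\infty$, and $\int_{F_k}g$ converges to the right integral by dominated convergence since $g\in L^1$. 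Lower semicontinuity then yields a minimizer.

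For the \textbf{covering} statement the key point is monotonicity of the constrained minimizers in $\lambda$: if $E_1,E_2$ are minimizers of \eqref{eq:insideprob} for $\lambda_1<\lambda_2$, then testing $E_1$ against $E_1\cap E_2$ and $E_2$ against $E_1\cup E_2$, summing, and using the submodularity \eqref{eq:subaddper} exactly as in the proof of Lemma \ref{lem:basiccomp}, I get $(\lambda_2-\lambda_1)\int_{E_1\setminus E_2} g \ls 0$, hence $\int_{E_1\setminus E_2}g = 0$, and then $|E_1\setminus E_2|=0$ by the local lower bound \eqref{eq:locallowerbound} applied on the compact set $K=\overline D$. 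Thus for any increasing sequence $\lambda_n\to+\infty$ a choice of minimizers $E^{\lambda_n}$ is nested, and I set $E^\infty := \bigcup_n E^{\lambda_n}$ (defined up to null sets).

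It then remains to show $|D\setminus E^\infty|=0$. Since $D$ itself is admissible in \eqref{eq:insideprob}, minimality of $E^{\lambda_n}$ gives $\per(E^{\lambda_n}) - \lambda_n\int_{E^{\lambda_n}}g \ls \per(D) - \lambda_n\int_D g$, and rearranging (using $E^{\lambda_n}\subseteq D$ and $\per(E^{\lambda_n})\gs0$) yields $\int_{D\setminus E^{\lambda_n}} g \ls \per(D)/\lambda_n \to 0$. The sets $D\setminus E^{\lambda_n}$ decrease to $D\setminus E^\infty$, so $\int_{D\setminus E^\infty}g=0$ and \eqref{eq:locallowerbound} again forces $|D\setminus E^\infty|=0$; hence a.e.\ $x\in D$ lies in some $E^{\lambda_n}$, and $\lambda_x := \lambda_n$ works. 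The outside case is identical after comparing with the admissible set $\R^d\setminus D$, which has finite perimeter $\per(D)$ and finite $g$-mass: one obtains $\int_{(\R^d\setminus D)\setminus F^\infty}g \ls \per(D)/\lambda_n \to 0$, and then $|(\R^d\setminus D)\setminus F^\infty|=0$ by exhausting $\R^d$ with balls $B(0,m)$ and using \eqref{eq:locallowerbound} on each; a fortiori this holds for a.e.\ $x\in\R^d\setminus\overline D$.

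The step I expect to be the main obstacle is the existence part of the outside problem, namely preventing the minimizing sequence from escaping to infinity — which is precisely what the isoperimetric dichotomy between $|F_k|$ and $|\R^d\setminus F_k|$ in the first paragraph is designed to handle. Everything else reduces to the submodularity of the perimeter and the comparison idea already used for Lemma \ref{lem:basiccomp}, so it should be routine.
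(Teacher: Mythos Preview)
Your argument is correct. The existence half is close in spirit to the paper's, though you take a small detour: for \eqref{eq:outsideprob} you invoke the isoperimetric dichotomy to control either $|F_k|$ or $|\R^d\setminus F_k|$, whereas the paper simply passes to the complement problem $\min_{E\supset D}\per(E)+\lambda\int_E g$ and uses that indicatrices are automatically bounded in $L^1_\loc$, so $\BV_\loc$ compactness applies without any mass bound. Your dichotomy is harmless but unnecessary.

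The more interesting difference is in the covering statement for the outside problem. The paper deduces $\lambda\int_{(\R^d\setminus D)\setminus F^\lambda} g \ls \per(D)$ and then, in order to convert this into $|(\R^d\setminus D)\setminus F^\lambda|\to 0$ via the lower bound $g\gs c_K$, it needs to know that $(\R^d\setminus D)\setminus F^\lambda$ is contained in a fixed compact set; this is \emph{not} obvious and is deferred to Lemma~\ref{lem:boundedcomplement}, which compares $F^\lambda$ with balls to pin down its complement inside $\overline{B(0,1)}$. Your route sidesteps this entirely: you pass to the limit in the $g$-integral (using the monotonicity of $F^{\lambda_n}$ that you set up) to get $\int_{(\R^d\setminus D)\setminus F^\infty} g = 0$, and then observe that \eqref{eq:locallowerbound} on every compact set forces $g>0$ a.e., hence $|(\R^d\setminus D)\setminus F^\infty|=0$ by exhaustion. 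This is a genuine simplification at this point of the argument, though of course Lemma~\ref{lem:boundedcomplement} is still needed later in the paper for Definition~\ref{def:dlambda}.
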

\begin{proof}
Let us focus first on minimizers of \eqref{eq:outsideprob}, for which we consider the equivalent problem for the complement
\[\min_{E \supset D} \per(E) +\lambda\int_E g.\]
Let $\{E_n\}_n$ be a minimizing sequence for this problem. The objective is nonnegative, so comparing with any fixed nonempty set we have an upper bound for $\per(E_n)$. Since $1_{E_n}(x) \in \{0,1\}$ and hence bounded in $L^1_\loc$, we can then apply compactness in $\BV_\loc$ \cite[Thm.~3.23]{AmbFusPal00} to obtain $v \in \BV_\loc(\R^d)$ such that $1_{E_n}\to v$ in $L^1_\loc$ and in consequence also almost everywhere. Since $\per(E_n)=|D1_{E_n}|(\R^d) \ls C$ we have in fact that $v \in \BV(\R^d)$, and since the convergence is in $L^1_\loc$ strong, there must be a finite perimeter set $E_0$ for which $v = 1_{E_0}$. Furthermore, by the lower semicontinuity of the total variation \cite[Rem.~3.5]{AmbFusPal00} with respect to $L^1_\loc$ convergence and since $g \in L^1(\R^d)$, using the dominated convergence theorem we have
\[\per(E_0) \ls \liminf_n \per(E_n), \text{ and } \int_{E_0} g = \liminf_n \int_{E_n} g,\]
so $\R^d \setminus E_0$ is a minimizer of \eqref{eq:outsideprob}. For \eqref{eq:insideprob} one proceeds similarly, with the difference that under the constraint $E \subset D$ the fact that $D$ is bounded allows to obtain full $L^1$ convergence of a minimizing sequence $\{E_n\}_n$.

To see the second part, we treat the inside and outside problems separately. First, notice that $D$ is admissible in \eqref{eq:insideprob}, so we have that
\[\per(E^\lambda)-\lambda \int_{E^\lambda} g \ls \per(D)-\lambda \int_D g,\]
or equivalently
\[\lambda \left( \int_D g - \int_{E^\lambda} g \right) \ls -\per({E}^\lambda)+\per(D)\ls \per(D),\]
where since $g >0$ and $E^\lambda \subset D$ the left hand side is positive, and using \eqref{eq:locallowerbound} for $\overline{D}$ we get
\[ |D \setminus E^\lambda| \ls \frac{1}{c_{\overline{D}}}\left( \int_D g - \int_{E^\lambda} g \right) \xrightarrow[\lambda \to \infty]{} 0,\]
so for a.e.~$x \in D$ we must have $x \in E^{\lambda_x}$ for some $\lambda_x$. Similarly $\R^d \setminus D$ is admissible in \eqref{eq:outsideprob}, so using $\per(\R^d \setminus D) = \per(D)$ we have for $F^\lambda$ any minimizer of \eqref{eq:outsideprob} the bound
\[\lambda \left( \int_{\R^d \setminus D} g - \int_{F^\lambda} g \right) \ls \per(D).\]
This time, to be able to use \eqref{eq:locallowerbound} we would need to see that $(\R^d \setminus D)\setminus F^\lambda = \R^d \setminus F^\lambda$ is bounded, which is not a priori obvious. For large enough $\lambda$ we prove in Lemma \ref{lem:boundedcomplement} below that $\R^d \setminus F^\lambda$ is indeed bounded, allowing us to conclude.
\end{proof}

\begin{lemma}\label{lem:boundedcomplement}
Assume that for every compact set $K \subset \R^d$ one can find $c_K$ such that \eqref{eq:locallowerbound} holds, and that $D \subset B(0,1)$. Then there is some $\lambda_1$ such that if $\lambda > \lambda_1$ all minimizers $F^\lambda$ of \eqref{eq:outsideprob} satisfy $\R^d \setminus F^\lambda \subset \overline{B(0,1)}$, and in particular $|\R^d \setminus F^\lambda| < +\infty$. Moreover in that case $F^\lambda \cap B(0,2)$ is also a minimizer of
\begin{equation}\label{eq:neumannextprob}\min_{F \subset B(0,2) \setminus D} \,\per\big(F;\,B(0,2)\big) - \lambda \int_F g.\end{equation}
\end{lemma}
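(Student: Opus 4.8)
The plan is to show that for $\lambda$ large, cutting off the part of $F^\lambda$ (equivalently, the part of the complement $\R^d\setminus F^\lambda$) lying outside $\overline{B(0,1)}$ is energetically favorable, so that minimizers cannot have complement escaping the unit ball. First I would recall that $F^\lambda$ is a minimizer of \eqref{eq:outsideprob}, equivalently $G^\lambda := \R^d\setminus F^\lambda \supseteq D$ minimizes $\per(G)+\lambda\int_G g$ over sets containing $D$; since $D$ is admissible we get the a priori bound $\per(G^\lambda)+\lambda\int_{G^\lambda} g \ls \per(D)+\lambda\int_D g$, hence $\lambda\int_{G^\lambda\setminus D} g \ls \per(D)$ and so $\int_{G^\lambda\setminus D} g \to 0$ as $\lambda\to\infty$. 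Using the local lower bound \eqref{eq:locallowerbound} on, say, $\overline{B(0,3)}$, this forces $|(G^\lambda\setminus D)\cap B(0,3)|\to 0$; in particular the portion of $G^\lambda$ in the annulus $B(0,3)\setminus\overline{B(0,2)}$ has measure going to zero.

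The core step is a comparison argument. Consider the competitor $\hat G := G^\lambda\cap \overline{B(0,1)}$, which still contains $D$ (since $D\subset B(0,1)$) and is thus admissible. By the submodularity-type inequality for perimeter applied to $G^\lambda$ and the ball, or more directly by slicing: for a.e.\ radius $\rho\in(1,2)$ the set $G^\lambda\cap \overline{B(0,\rho)}$ is admissible, and an averaging argument over $\rho$ (coarea applied to the distance function, as in \cite[Lemma]{Mag12}-type estimates) gives that for some $\rho$,
\[
\per\big(G^\lambda\cap \overline{B(0,\rho)}\big) \ls \per\big(G^\lambda; B(0,\rho)\big) + \mathcal{H}^{d-1}\big(G^\lambda\cap \partial B(0,\rho)\big) \ls \per\big(G^\lambda\big) + C\,|G^\lambda\cap (B(0,2)\setminus B(0,1))|,
\]
up to constants depending on dimension. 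Then the energy difference between $G^\lambda$ and this truncation is
\[
\big[\per(G^\lambda) - \per(G^\lambda\cap\overline{B(0,\rho)})\big] + \lambda \int_{G^\lambda\setminus \overline{B(0,\rho)}} g \gs -C\,|G^\lambda\cap(B(0,2)\setminus B(0,1))| + \lambda \int_{G^\lambda\setminus\overline{B(0,\rho)}} g.
\]
Here I would need to trade the (possibly unbounded) perimeter we delete against the volume term; since $g$ is only in $L^1$ and may decay, the term $\lambda\int_{G^\lambda\setminus\overline{B(0,\rho)}} g$ need not dominate $\per(G^\lambda;\R^d\setminus B(0,\rho))$ directly. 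To fix this, observe that if $\R^d\setminus F^\lambda = G^\lambda$ is not contained in $\overline{B(0,1)}$ then $G^\lambda\setminus\overline{B(0,1)}$ has positive measure and positive perimeter; by the isoperimetric inequality \eqref{eq:isoper} applied to $G^\lambda\setminus\overline{B(0,\rho)}$ (which has finite or infinite measure — if infinite, then $\int g$ over it being finite and the volume term being the relevant one, the energy is $+\infty$ unless this component is cut, contradicting minimality; if finite, isoperimetry controls its perimeter by its volume to the power $(d-1)/d$). Combining, once $|G^\lambda\cap(B(0,2)\setminus B(0,1))|$ is small enough — which happens for $\lambda > \lambda_1$ by the first paragraph — the truncation strictly decreases energy unless $G^\lambda\setminus\overline{B(0,1)}$ is Lebesgue-null, proving $\R^d\setminus F^\lambda\subset\overline{B(0,1)}$.

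Finally, for the last assertion: once $\R^d\setminus F^\lambda\subset\overline{B(0,1)}\subset B(0,2)$, the set $F^\lambda\cap B(0,2)$ agrees with $F^\lambda$ outside $B(0,2)$ up to the complement being inside $B(0,1)$, and more importantly $F^\lambda$ contains $\R^d\setminus B(0,1)$, so on $B(0,2)$ we have $F^\lambda\cap B(0,2)\supseteq B(0,2)\setminus B(0,1)$. Any competitor $F$ for \eqref{eq:neumannextprob} can be extended to a competitor $F\cup(\R^d\setminus B(0,2))$ for \eqref{eq:outsideprob}, and conversely; the perimeter relative to $B(0,2)$ and the $\int g$ terms match because nothing changes outside $B(0,2)$, where $F^\lambda$ already equals the full complement of a subset of $B(0,1)$. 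Comparing energies of $F^\lambda$ and this extension, using $\per(F;B(0,2)) + \per(\R^d\setminus B(0,2)) + (\text{shared boundary on }\partial B(0,2))$ bookkeeping and the fact that $\partial B(0,2)$ contributes the same to both, yields that $F^\lambda\cap B(0,2)$ minimizes \eqref{eq:neumannextprob}. The main obstacle is the truncation estimate in the second paragraph: controlling the deleted perimeter $\per(G^\lambda;\R^d\setminus B(0,1))$ without any a priori bound on it, which is why one must route through isoperimetry on the escaping piece together with the smallness of the annular volume coming from the $L^1$ decay of $\lambda\int_{G^\lambda\setminus D} g$.
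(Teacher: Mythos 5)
Your strategy of truncating $G^\lambda := \R^d \setminus F^\lambda$ outside a sphere of radius $\rho \in (1,2)$ chosen by averaging, and comparing energies, is a genuinely different route from the paper's. The paper instead writes the compact annulus $K_1 = \overline{B(0,2)} \setminus B(0,1)$ as the union of the closed balls $\overline{B(x,1/2)}$ over $x \in \partial B(0,3/2)$, notes via Example~\ref{ex:curvball} that each such ball admits a variational mean curvature equal to $2d$ on the ball, and observes that for $\lambda > 2d/c_{K_1}$ this is strictly dominated by the curvature $\lambda g$ of $F^\lambda$ on $\R^d \setminus D \supset K_1$. The comparison Lemma~\ref{lem:basiccomp} then yields $B(x,1/2) \subset F^\lambda$ for each $x$, hence $B(0,2) \setminus \overline{B(0,1)} \subset F^\lambda$ up to a null set. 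Finally, since $\partial B(0,3/2)$ lies in the interior of $F^\lambda$ and $g>0$, replacing $F^\lambda$ by $F^\lambda \cup (\R^d \setminus B(0,3/2))$ cannot increase the perimeter and can only increase $\int g$, so minimality forces $\R^d \setminus B(0,3/2) \subset F^\lambda$ as well, and the exact inclusion follows for the explicit threshold $\lambda_1 = 2d/c_{K_1}$.

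There is a genuine gap in your approach, which you partly flag but I would diagnose differently. Your a priori bound $\lambda \int_{G^\lambda \setminus D} g \ls \per(D)$ together with the local lower bound on $g$ gives $|(G^\lambda\setminus D) \cap B(0,3)| \to 0$ only in the limit $\lambda \to \infty$; chaining this through the averaging and isoperimetric estimates one obtains at best that $|G^\lambda \setminus \overline{B(0,1)}| \to 0$, whereas the lemma requires the exact vanishing $|G^\lambda \setminus \overline{B(0,1)}| = 0$ for every fixed $\lambda > \lambda_1$, which your argument does not deliver. There is also a concrete error in the case analysis: if $|G^\lambda \setminus \overline{B(0,\rho)}| = +\infty$ the energy is not $+\infty$, since $g \in L^1(\R^d)$ makes $\lambda\int_{G^\lambda} g$ finite regardless of $|G^\lambda|$, so this case does not contradict minimality on its own. (Incidentally, the deleted perimeter is not actually an obstruction: comparison with the admissible competitor $D$ gives $\per(G^\lambda) \ls \per(D)$ a priori, and in any case a large deleted perimeter works in your favor in the truncation inequality.) Your sketch of the final assertion --- extending a competitor for~\eqref{eq:neumannextprob} by $\R^d\setminus B(0,2)$ and matching energies using $\R^d \setminus \overline{B(0,1)} \subset F^\lambda$ --- is fine in spirit and matches the paper.
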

\begin{proof}
Let us define the compact set
\[K_1:=\overline{B(0,2)} \setminus B(0,1) = \bigcup_{x \in \partial B(0,3/2)} \overline{B\left(x,1/2\right)}.\]
Using Lemma \ref{lem:basiccomp}, Example \ref{ex:curvball}, this expression and the condition on $g$ we see that if 
\[\lambda > \frac{2d}{c_{K_1}} =: \lambda_1\text{, then }\accentset{\circ}{K}_1 = B(0,2) \setminus \overline{B(0,1)} \subset F^\lambda.\] 
But since $g>0$ and $\partial B(0, 3/2) \subset \accentset{\circ}{K}_1$ this means that
\[\per\left(F^\lambda \cup \big(\R^d \setminus B(0,3/2)\big)\right) \ls \per(F^\lambda) \text{ and }\int_{F^\lambda \cup \,(\R^d \setminus B(0,3/2))} g \gs \int_{F^\lambda} g,\]
so necessarily $\R^d \setminus B(0,3/2) \subset F^\lambda$ for all $\lambda > \lambda_1$ as well, hence $\R^d \setminus \overline{B(0,1)} \subset F^\lambda$. These considerations also directly prove that $F^\lambda \cap B(0,2)$ minimizes \eqref{eq:neumannextprob}.
\end{proof}

The curvatures arising from this construction are in fact not independent of the choice of the density $g$, as is shown in Proposition \ref{prop:emptyFlambda} below. As has been noted in previous works \cite{Bar94, IglMer20}, since we work with bounded $D$, this ambiguity can be mitigated by choosing $g(x) = 1$ for all $x \in D$. However $g \in L^1(\R^d \setminus D)$ is required to make sense of the unbounded problem \eqref{eq:outsideprob}, and there is no canonical choice for it outside of $D$. Moreover, as opposed to most other works using this variational mean curvature, we plan to make explicit use of $\kappa_D$ on $\R^d \setminus D$ and the corresponding minimizers of \eqref{eq:outsideprob}.

\begin{proposition}\label{prop:emptyFlambda}For any bounded $D$ and any positive $g \in L^1(\R^d)$, there exists some $\lambda_g > 0$ such that if $\lambda \ls \lambda_g$, the only minimizer of \eqref{eq:outsideprob} is the empty set, and in consequence $\kappa_D(x) \gs -\lambda g(x)$ for a.e.~$x$.  Moreover, if additionally $|\R^d \setminus F^\lambda| < +\infty$ for all $\lambda$ then
\begin{equation*}\label{eq:kappafar}\kappa_D(x) = - \lambda_g \, g(x) \text{ for a.e. }x \in \R^d \setminus \conv D.\end{equation*}
\end{proposition}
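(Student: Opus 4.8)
The plan is to treat the two assertions separately, the first by a direct energy comparison using the isoperimetric inequality, the second via a perimeter inequality for intersections with convex sets. For the first part, since $\emptyset$ is admissible in \eqref{eq:outsideprob} with energy $0$, it suffices to show $\per(F) - \lambda \int_F g > 0$ for small $\lambda$ and every admissible $F \subset \R^d \setminus D$ with $|F| > 0$. Assuming $\per(F) < +\infty$, the isoperimetric inequality \eqref{eq:isoper} gives either $\per(F) \gs \Theta_d |F|^{(d-1)/d}$ with $|F| < +\infty$, or $\per(F) \gs \Theta_d |D|^{(d-1)/d}$ (the latter since $D \subset \R^d \setminus F$ forces $|\R^d \setminus F| \gs |D|$). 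In the first case the integrability of $g$ (here $g \in L^d(\R^d)$, as for the densities used in this paper) gives $\int_F g \ls \|g\|_{L^d}|F|^{(d-1)/d} \ls (\|g\|_{L^d}/\Theta_d)\,\per(F)$ by Hölder and \eqref{eq:isoper}, so the energy is positive once $\lambda < \Theta_d/\|g\|_{L^d}$; in the second, $\int_F g \ls \|g\|_{L^1}$, so the energy is positive once $\lambda < \Theta_d |D|^{(d-1)/d}/\|g\|_{L^1}$. Let $\lambda_g$ be the supremum of all $\lambda$ for which $\emptyset$ is the only minimizer of \eqref{eq:outsideprob}; it is positive by the above, and a genuine threshold since the energy of a fixed nonempty $F$ is strictly decreasing in $\lambda$. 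For the statement on $\kappa_D$: for $x \in \R^d \setminus D$ and $\lambda \ls \lambda_g$ every minimizer of \eqref{eq:outsideprob} is empty, so the infimum in \eqref{eq:kappaminus} runs over $\lambda > \lambda_g$ only, and as $g > 0$ a.e.\ this forces $\kappa_D(x) \ls -\lambda_g g(x)$, in particular $\ls -\lambda g(x)$ for every $\lambda \ls \lambda_g$ (while $\kappa_D > 0$ on $D$ is built into the construction).

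For the second part, fix $\lambda > \lambda_g$. By the choice of $\lambda_g$ there is a nonempty minimizer of \eqref{eq:outsideprob}, hence (Proposition \ref{prop:cupcapofmin} in its version with inclusion constraints) a nonempty maximal minimizer $F^\lambda$, with $\per(F^\lambda) < +\infty$ since the minimal energy is finite, and with $A := \R^d \setminus F^\lambda$ of finite measure by the added hypothesis. The competitor $F^\lambda \cup (\R^d \setminus \conv D)$ is admissible (as $\R^d \setminus \conv D \subset \R^d \setminus D$) and has complement $A \cap \conv D$; the key point is the inequality $\per(A \cap \conv D) \ls \per(A)$. Granting it, since $g > 0$ a.e.\ the energy of this competitor is no larger than that of $F^\lambda$, with equality only if $|(\R^d \setminus \conv D) \setminus F^\lambda| = 0$, so minimality of $F^\lambda$ gives $\R^d \setminus \conv D \subseteq F^\lambda$ up to a null set. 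Applying this for a sequence $\lambda_n \downarrow \lambda_g$, a.e.\ $x \in \R^d \setminus \conv D$ lies in every $F^{\lambda_n}$, so the infimum in \eqref{eq:kappaminus} equals $\lambda_g g(x)$; combined with the bound from the first part this yields $\kappa_D(x) = -\lambda_g g(x)$ for a.e.\ $x \in \R^d \setminus \conv D$.

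The main obstacle is the inequality $\per(A \cap C) \ls \per(A)$ for convex $C$. It is false without controlling the size of $A$ — for instance, if $A$ is the complement of a small ball and $C$ a disjoint ball, then $\per(A \cap C) = \per(C)$ can be arbitrarily larger than $\per(A)$ — which is exactly why the hypothesis $|\R^d \setminus F^\lambda| < +\infty$ is needed here. I would prove it by writing $\conv D$ (or its closure) as a countable intersection of closed half-spaces $H_i$, so that, $|A|$ being finite, the finite intersections $A \cap H_1 \cap \cdots \cap H_n$ converge in $L^1$ to $A \cap \conv D$ and, by lower semicontinuity of the perimeter, it suffices to handle a single half-space $H$ (iterated, since intersecting keeps the measure $\ls |A|$ throughout). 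For a half-space one decomposes $\partial^\ast(A \cap H)$ into the part in the open half-space $\accentset{\circ}{H}$, where it equals $\partial^\ast A$, and the part on $\partial H$, which up to $\mathcal H^{d-1}$-null sets is $(A^{(1)} \cap \partial H) \cup (\partial^\ast A \cap \partial H)$; comparing with $\per(A) = \per(A;\accentset{\circ}{H}) + \per(A;H^c) + \mathcal H^{d-1}(\partial^\ast A \cap \partial H)$, the only term of $\per(A \cap H)$ not already bounded by $\per(A)$ is $\mathcal H^{d-1}(A^{(1)} \cap \partial H)$, and this is controlled by $\per(A;H^c)$ through the one-sided trace inequality $\int_{\partial H} |\mathrm{tr}^{H^c} 1_A| \dd \mathcal H^{d-1} \ls |D1_A|(H^c)$, which holds because $|A| < +\infty$ makes $1_A \in L^1$. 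Summing the contributions gives $\per(A \cap H) \ls \per(A)$, completing the argument.
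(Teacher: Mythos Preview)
For the second part your argument is essentially the paper's: both reduce to the inequality $\per(A \cap \conv D) \ls \per(A)$ for $A = \R^d \setminus F^\lambda$ of finite measure (the paper simply cites this fact, you supply a half-space/trace sketch), and conclude $\R^d \setminus \conv D \subset F^\lambda$ for every $\lambda > \lambda_g$, whence $\kappa_D = -\lambda_g g$ there. The inequality $\kappa_D \ls -\lambda_g g$ on $\R^d \setminus D$ that you extract from the first part is indeed the direction needed to close the equality with the second part.

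The first part, however, does not establish the proposition in the stated generality. In the case $|F| < +\infty$ you invoke the H\"older bound $\int_F g \ls \|g\|_{L^d}|F|^{(d-1)/d}$, which requires $g \in L^d(\R^d)$, while only $g \in L^1(\R^d)$ is assumed. For a merely integrable density there is no uniform control of $\int_F g$ by $|F|^{(d-1)/d}$: one can take $F \subset \R^d \setminus D$ of arbitrarily small measure (hence arbitrarily small perimeter, by the isoperimetric inequality) on which $g$ concentrates mass bounded away from zero, and your energy lower bound collapses. The paper avoids this by bounding the perimeter from below absolutely rather than through $|F|$: any nonempty $F \subset \R^d \setminus D$ with $\per(F) < +\infty$ has $\R^d \setminus F \supset D$, hence $\per(F) = \per(\R^d \setminus F) \gs \per(G_D)$ where $G_D := \argmin_{E \supset D} \per(E)$. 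Since $D$ is bounded, $\per(G_D) > 0$, and combined with $\int_F g \ls \|g\|_{L^1}$ this forces the energy to be positive once $\lambda < \per(G_D)/\|g\|_{L^1}$, using only the $L^1$ hypothesis.
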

\begin{proof}
Let us define
\[G_D := \argmin_{E \supset D}\per(E),\]
which exists by the same compactness arguments as in Proposition \ref{prop:kappawelldef} (if $d=2$ then in fact $G_D=\conv D$ \cite{FerFus09}). Then any minimizer $F \neq \emptyset$ of \eqref{eq:outsideprob} must have $\per(F) \gs \per(G_D)$, so that
\begin{equation}\label{eq:convest}\per(F)-\lambda\int_F g \gs \per(G_D) -\lambda \int_{\R^d \setminus D} g.\end{equation}
But whenever 
\[\lambda < \lambda_c := \frac{\per(G_D)}{\int_{\R^d \setminus D} g}\]
we have that the right hand side of \eqref{eq:convest} is positive, making $F$ a worse competitor than the empty set. We can then define
\[\lambda_g := \sup\left\{ \lambda>0 \,\, \middle\vert\,\, \inf_{F \subset \R^d \setminus D} \,\per(F) - \lambda \int_F g = 0 \right\} \gs \lambda_c >0.\]
Note that $\inf_{F \subset \R^d \setminus D} \,\per(F) - \lambda \int_F g \ls \per(D) - \lambda \int_{\R^d \setminus D}g ,$
the latter being negative as soon as $\lambda \int_{\R^d \setminus D}g < \per(D)$. This implies that \[\lambda_g \ls \frac{\per(D)}{\int_{\R^d \setminus D}g }.\]

To prove the second part, notice that having $\kappa_D(x) < - \lambda_g \, g(x)$ means that $x \notin F^{\lambda_g + \eps}$ for some $\eps >0$, or equivalently, that $x$ belongs to the minimal (in the sense of Proposition \ref{prop:cupcapofmin}) minimizer $E^{\lambda_g + \eps}$ of
\[\min_{E \supset D} \per(E) +(\lambda_g + \eps) \int_E g.\]
However since by assumption we have $|E^{\lambda_g + \eps}| = |\R^d \setminus F^{\lambda_g + \eps}| < \infty$, taking its intersection with a convex set cannot increase the perimeter (see \cite[Lem.~3.5]{BriTor15} for a proof in the general setting) and we get
\[\per(E^{\lambda_g + \eps})+(\lambda_g+\eps)\int_{E^{\lambda_g + \eps}}g \gs \per(E^{\lambda_g + \eps}\cap \conv D)+(\lambda_g+\eps)\int_{E^{\lambda_g + \eps}\cap \conv D}g,\]
and the inequality would be strict if $|E^{\lambda_g + \eps} \setminus \conv D|>0$, so necessarily
\[\left| \left\{x \in \R^d \setminus \conv D \,\middle|\, \kappa_D(x) < - \lambda_g \, g(x) \right\}\right| = \left| \bigcap_{\eps >0} E^{\lambda_g + \eps} \setminus \conv D \right| = 0.\]
\end{proof}

We see that the concrete choice of density $g$ affects the values of $\kappa_D$. We now introduce one such choice which at least allows for a purely geometric description of minimizers for $\lambda$ large enough.

\begin{definition}\label{def:gR}
Assume $D \subseteq B(0,1)$. For any $R > 1$ we define $g_R$ by 
\begin{equation}\label{eq:defgR}g_R(x):=\begin{cases}1 &\text{ if }0\ls |x| \ls R,\\ g_f &\text{ if }|x|>R,\end{cases}\end{equation}
for some $g_f \in L^1(\R^d \setminus B(0,R))$ with $0 < g_f \ls 1$ and satisfying \eqref{eq:locallowerbound}.
\end{definition}

\begin{figure}
\centering
 \includegraphics[width = 0.7\textwidth]{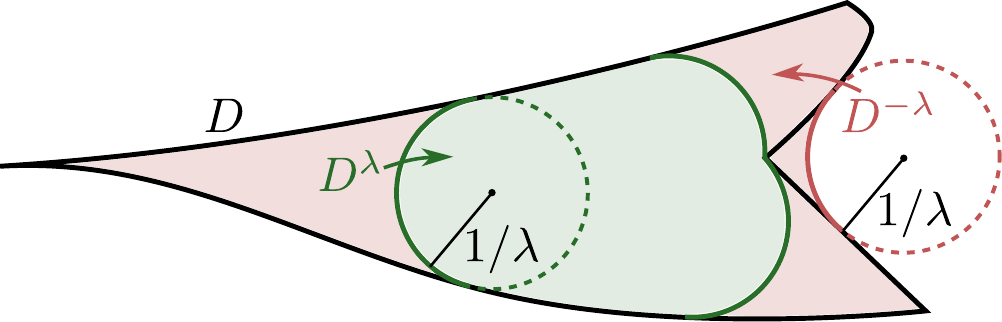}
 \caption{Approximation of the set $D$ with the level sets of $\kappa_D$. For fixed $\lambda >0$, we always have $D^\lambda \subset E \subset D^{-\lambda}$, and $\dh(\partial D^\lambda, \partial D^{-\lambda}) \to 0$ as $\lambda \to +\infty$. With respect to their outer normals, the free boundaries of $D^\lambda$ and $D^{-\lambda}$ have curvature $\lambda$ and $-\lambda$ respectively.} 
 \label{fig:approximation}
\end{figure}

Since we will make extensive use of minimizers of \eqref{eq:insideprob} and \eqref{eq:outsideprob} with this particular choice of density, we introduce some notation for them.
\begin{definition}
\label{def:dlambda}
Let $\lambda >0$ and $D \subset B(0,1)$ be of finite perimeter. We denote by $D^\lambda$ the maximal (in the sense of inclusion) minimizer of \eqref{eq:insideprob} with density $g_2$, that is, of
\begin{equation}\label{eq:Dlambdaplusprob1}\min_{E \subset D} \per(E) - \lambda |E|.\end{equation} 
We also define $D^{-\lambda}$ as
\[D^{-\lambda} := \R^d \setminus F^\lambda,\]
where $F^\lambda$ is the maximal minimizer of \eqref{eq:outsideprob} with density $g_2$. In view of the proof of Lemma \ref{lem:boundedcomplement}, whenever $\lambda > 2d$ we have that $F^\lambda$ can be determined from \eqref{eq:neumannextprob}, which since $g_2 \equiv 1$ on $B(0,2)$ turns into
 \begin{equation*}
  \label{eq:outsideprob1}
  \min_{F \subset B(0,2) \setminus D} \,\per\big(F;\,B(0,2)\big) - \lambda |F|.
 \end{equation*} 
Moreover, notice that $D^{-\lambda}$ can also be found directly as the minimal minimizer of
 \begin{equation*}
  \label{eq:Dlambdaminusprob}
  \min_{E \supset D} \,\per(E) + \lambda \int_E g_2.
 \end{equation*}
\end{definition}

\begin{remark}\label{rem:rescaling}
We have chosen $D \subset B(0,1)$ but other bounded sets can be treated by rescaling. If for any set $E$ and $q>0$ we consider the rescaled set $qE$ we have
\[\per(qE)-\lambda \int_{qE} g_R(x) \dd x = q^{d-1}\per(E) - q^d \lambda \int_E g_{R/q}(y) \dd y,\]
so the minimization problem for these rescaled sets is equivalent to the original one with $\lambda$ replaced by $q \lambda$ and $R$ replaced by $R/q$.
\end{remark}

The choice of signs in the notation is motivated by \eqref{eq:kappaplus} and \eqref{eq:kappaminus}, and by the fact that the free boundaries of $D^\lambda$ and $D^{-\lambda}$ have curvature $\lambda$ and $-\lambda$ respectively with respect to their outer normals, see \eqref{eq:intbypartsH}. 

\begin{remark}
From now on, whenever we use the variational mean curvature $\kappa_D$ for some $D \subset B(0,1)$, we will always assume that the density used is $g_2$, as in Definition \ref{def:dlambda} above.
\end{remark}

We will see in later sections that for large values of $\lambda$, the sets $D^\lambda$ and $D^{-\lambda}$ provide us with an approximation of $D$ in Hausdorff distance from the inside and outside respectively, motivating the notation. Moreover, they also determine the curvature $\kappa_D$ through \eqref{eq:kappaplus} and \eqref{eq:kappaminus}.

\subsection{Bounds and examples of variational mean curvatures}

\begin{lemma}\label{lem:ballinc}Assume that $x_0, r$ are such that $\Br \subseteq D$ up to measure zero, that is, $|\Br \setminus D|=0$. Then the optimal variational mean curvature $\kappa_D$ of $D$ satisfies 
\begin{equation}\label{eq:curvinball}\restr{\kappa_D}{\Br} \ls \frac{d}{r}.\end{equation}
In consequence, for any interior point $x \in \accentset{\circ}{D}$, we have
\begin{equation}\label{eq:distboundin}\kappa_D(x) \ls \frac{d}{\dist(x, \partial D)}.\end{equation}
Similarly, for $x \in \R^d \setminus \overline D$ we have $-\kappa_D(x) \ls d/\dist(x, \partial D)$. Therefore, for any $K \subset \R^d$ we have 
\begin{equation}\label{eq:distbound}\|\kappa_D\|_{L^\infty(K)} \ls \frac{d}{\dist(K, \partial D)},\end{equation}
where $\dist(K, \partial D) := \inf_{x \in K} \dist(x, \partial D)$.
\end{lemma}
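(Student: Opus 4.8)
The plan is to prove \eqref{eq:curvinball} first, using the definition of $\kappa_D$ via \eqref{eq:kappaplus}. Since $B(x_0,r)\subseteq D$ (up to measure zero), the function $v_{B(x_0,r)}$ from Example \ref{ex:curvball} is a variational mean curvature for $B(x_0,r)$, with value $d/r$ on $B(x_0,r)$ and strictly negative outside. For any $\lambda>0$, let $E^\lambda$ be a minimizer of \eqref{eq:insideprob} with density $g_2\equiv 1$ on $D$; since $D\subseteq B(0,1)$ we may as well assume $B(x_0,r)\subseteq B(0,1)$ so that $g_2=1$ there and $\lambda g_2 = \lambda$ is the constant curvature associated to $E^\lambda$. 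The idea is to apply the comparison Lemma \ref{lem:basiccomp} to $E^\lambda$ (with curvature $\kappa_1 = \lambda$) and $B(x_0,r)$ (with curvature $\kappa_2 = v_{B(x_0,r)}$): on the set $E^\lambda\setminus B(x_0,r)$ we have $\kappa_2 = v_{B(x_0,r)}<0<\lambda=\kappa_1$, which is the wrong direction, so instead I compare $B(x_0,r)$ against $E^\lambda$, i.e. take $E_1 = B(x_0,r)$, $E_2 = E^\lambda$: on $B(x_0,r)\setminus E^\lambda$ we would need $v_{B(x_0,r)}=d/r < \lambda$, which holds precisely when $\lambda > d/r$. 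Hence for $\lambda > d/r$ we conclude $|B(x_0,r)\setminus E^\lambda|=0$, so $B(x_0,r)\subseteq E^\lambda$ up to measure zero, and therefore a.e. point $x\in B(x_0,r)$ lies in $E^\lambda$ for every $\lambda>d/r$. By the definition \eqref{eq:kappaplus}, $\kappa_D(x)=\inf\{\lambda g_2(x)\mid x\in E^\lambda\}\le \inf_{\lambda>d/r}\lambda = d/r$ for a.e. $x\in B(x_0,r)$, which gives \eqref{eq:curvinball}.

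Next, \eqref{eq:distboundin} follows immediately: if $x\in\accentset{\circ}{D}$ then for every $r<\dist(x,\partial D)$ the ball $B(x,r)$ is contained in $D$ (here using that $\partial D = \supp D1_D$, so the topological interior is contained in the measure-theoretic interior up to the caveat about $|\partial D|$; more carefully, $x\in\accentset{\circ}{D}$ together with the chosen representative gives $|B(x,r)\setminus D|=0$ for $r$ small), so \eqref{eq:curvinball} with $x_0=x$ yields $\kappa_D(x)\le d/r$, and taking $r\uparrow\dist(x,\partial D)$ gives $\kappa_D(x)\le d/\dist(x,\partial D)$.

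For the exterior bound, the argument is symmetric but uses the outside problem \eqref{eq:outsideprob} and the corresponding minimizers $F^\lambda$. Given $x\in\R^d\setminus\overline D$ and $r<\dist(x,\partial D)$ with $B(x,r)\cap D=\emptyset$, I want to show $B(x,r)\subseteq F^\lambda$ (equivalently $B(x,r)\cap D^{-\lambda}=\emptyset$) for $\lambda$ large. One should be slightly careful because $F^\lambda$ is a subset of $\R^d\setminus D$ which may have finite complement rather than finite measure; by Lemma \ref{lem:boundedcomplement}, for $\lambda > 2d$ the complement $\R^d\setminus F^\lambda = D^{-\lambda}$ is bounded, so comparison with balls is legitimate. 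Writing the outside problem in the complementary form $\min_{E\supset D}\per(E)+\lambda\int_E g_2$, its minimal minimizer is $D^{-\lambda}$, which has bounded curvature $-\lambda$ on its exterior. Applying Lemma \ref{lem:basiccomp} as before (comparing $B(x,r)$ with curvature $d/r$ against the set $F^\lambda$ whose exterior $D^{-\lambda}$ carries curvature $-\lambda<0<d/r$, in the appropriate direction) one gets $|B(x,r)\setminus F^\lambda|=0$ for $\lambda>\max(d/r,2d)$, hence a.e. $x\in B(x,r)$ lies in $F^\lambda$ for all such $\lambda$, so by \eqref{eq:kappaminus}, $-\kappa_D(x)=\inf\{\lambda g_2(x)\mid x\in F^\lambda\}\le d/r$; letting $r\uparrow\dist(x,\partial D)$ gives the claim. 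Finally \eqref{eq:distbound} is just the supremum over $x\in K$ of \eqref{eq:distboundin} and its exterior analogue, combined with the fact that $\kappa_D$ vanishes nowhere of relevance on $\partial D$; points of $K\cap\partial D$ contribute $\dist(x,\partial D)=0$ so the bound $d/\dist(K,\partial D)=+\infty$ is vacuous there. The main technical obstacle is the bookkeeping around the exterior problem: ensuring comparison with balls is valid (which needs the boundedness from Lemma \ref{lem:boundedcomplement} and the restriction $\lambda>2d$) and correctly tracking which density ($g_2\equiv 1$ near the origin versus $g_f$ far away) enters the curvature values in \eqref{eq:kappaplus}–\eqref{eq:kappaminus}; everything else is a routine application of Lemma \ref{lem:basiccomp} and Example \ref{ex:curvball}.
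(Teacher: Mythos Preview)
Your proof is correct and follows essentially the same approach as the paper: use Lemma~\ref{lem:basiccomp} together with the ball curvature from Example~\ref{ex:curvball} to show that $B(x_0,r)\subset D^\lambda$ (respectively $B(x,r)\subset F^\lambda$) whenever $\lambda>d/r$, and then read off the bound on $\kappa_D$ from the infimum in \eqref{eq:kappaplus}--\eqref{eq:kappaminus}. Your treatment is in fact more explicit than the paper's, which simply says ``proceed similarly'' for the exterior case; your invocation of Lemma~\ref{lem:boundedcomplement} to ensure $B(x,r)\setminus F^\lambda\subset\overline{B(0,1)}$ (where $g_2\equiv 1$) is the right way to make that step precise. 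One minor bookkeeping point: with the restriction $\lambda>\max(d/r,2d)$ your infimum actually yields $-\kappa_D(x)\le\max(d/r,2d)\,g_2(x)$ rather than $d/r$ outright, so the stated bound $d/\dist(x,\partial D)$ is only recovered directly when $\dist(x,\partial D)\le 1/2$; this is harmless for all applications in the paper (where either $\delta$ is small or only a uniform finite bound is needed), and the paper itself glosses over it.
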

\begin{proof}
By the definition of $D^\lambda$ as the maximal solution of \eqref{eq:Dlambdaplusprob1} and that of $\kappa_D$ in \eqref{eq:kappaplus}, $x \in D^\lambda$ implies $\kappa_D(x) \ls \lambda$. On the other hand, by Lemma \ref{lem:basiccomp} and since by Example \ref{ex:curvball} we know that we can find a variational mean curvature for $\Br$ with value $d/r$ in $\Br$, we have that $\lambda > d/r$ implies $\Br \subseteq D^\lambda$, so for $x \in \Br$ we get $\kappa_D(x)\ls \lambda$ for every $\lambda > d/r$, which is \eqref{eq:curvinball}. To see \eqref{eq:distboundin}, just notice that since $x \in \accentset{\circ}{D}$, we have that $B(x,r) \subset D$ for each $r < \dist(x, \partial D)$.

If $x \in \R^d \setminus \overline D$, we can proceed similarly using $F^\lambda = \R^d \setminus D^{-\lambda}$ and its variational problem \eqref{eq:outsideprob}. These two cases prove \eqref{eq:distbound}, since the bound is trivial when $\dist(K,\partial D)=0$.
\end{proof}

\begin{lemma}\label{lem:cheeger}
Let $D \subset \R^d$ be bounded. Denote by
\[h(D):= \min_{E \subset D}\frac{\per(E)}{|E|}\]
the Cheeger constant of $D$, the minimum being attained at Cheeger sets of $D$. Then $\kappa_D(x) \gs h(D)$ for $x \in D$, with equality for $x \in C_D$, the maximal Cheeger set.
\end{lemma}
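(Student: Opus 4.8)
\textbf{Proof plan for Lemma \ref{lem:cheeger}.}

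The plan is to exploit the definition \eqref{eq:kappaplus} of $\kappa_D$ through the minimizers $E^\lambda$ of the constrained problems \eqref{eq:insideprob} with density $g_2 \equiv 1$ on $D$, i.e. the sets $D^\lambda$ minimizing $\per(E)-\lambda|E|$ over $E \subset D$. The key observation is that $D^\lambda = \emptyset$ precisely when $\lambda$ is below the Cheeger constant: indeed, if $E \subset D$ is nonempty then $\per(E) - \lambda|E| \gs |E|\bigl(h(D) - \lambda\bigr)$, which is positive (hence worse than the empty competitor) whenever $\lambda < h(D)$. Consequently, for every $x \in D$, the infimum in \eqref{eq:kappaplus} only ranges over $\lambda \gs h(D)$, giving $\kappa_D(x) \gs h(D)$. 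This is the easy half.

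For the equality on the maximal Cheeger set $C_D$, first I would recall that Cheeger sets are themselves minimizers of $\per(E) - h(D)|E|$ over $E \subset D$ (the minimum value being $0$), so that taking $\lambda = h(D)$ in \eqref{eq:insideprob} the maximal minimizer contains $C_D$; one must be slightly careful that $\lambda = h(D)$ is the borderline value not strictly covered by the notation $D^\lambda$ with $\lambda > 0$, but the argument via \eqref{eq:kappaplus} still applies by approximation. Concretely, for $x \in C_D$ and any $\eps > 0$, one uses that $C_D \subset D^{h(D)+\eps}$ (since $C_D$ remains a minimizer for the slightly larger parameter — its energy only decreases — while $D^{h(D)+\eps}$ is maximal), so $x$ lies in a minimizer of \eqref{eq:insideprob} at level $\lambda = h(D)+\eps$, whence $\kappa_D(x) \ls h(D) + \eps$. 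Letting $\eps \to 0$ and combining with the lower bound already established gives $\kappa_D(x) = h(D)$ on $C_D$.

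The main subtlety I anticipate is the treatment of the endpoint $\lambda = h(D)$ and, relatedly, making sure that ``the maximal Cheeger set'' $C_D$ is well-defined and that $C_D \subset D^{h(D)+\eps}$: the first follows from Proposition \ref{prop:cupcapofmin} applied with the inclusion constraint $E \subset D$ (intersections and unions of Cheeger sets are Cheeger sets, as the perimeter and volume terms balance exactly as in \eqref{eq:reversefundamental}), and the second from Lemma \ref{lem:basiccomp} comparing the variational curvature of $C_D$ (which equals $h(D)$ inside, by the same one-line computation) with that of $D^{h(D)+\eps}$ on the region $C_D \setminus D^{h(D)+\eps}$, where the strict inequality $h(D) < h(D)+\eps$ forces this set to be Lebesgue-null. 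Alternatively one can bypass the endpoint entirely: since $C_D$ minimizes $\per(E)-\mu|E|$ over $E\subset D$ for every $\mu \gs h(D)$ (its energy is $\ls |C_D|(h(D)-\mu) \ls 0$, with equality forced by minimality of $h(D)$), in particular $C_D \subset D^\mu$ for all $\mu > h(D)$, and \eqref{eq:kappaplus} immediately yields $\kappa_D(x) \ls \mu$ for all such $\mu$, hence $\kappa_D(x) \ls h(D)$ on $C_D$.
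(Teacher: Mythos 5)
The lower-bound half of your argument is fine and is the contrapositive of the paper's: for $\lambda < h(D)$ the only minimizer of \eqref{eq:insideprob} is the empty set, so no $\lambda < h(D)$ contributes to the infimum in \eqref{eq:kappaplus}.

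For the upper bound, your very first observation — Cheeger sets minimize $\per(E) - h(D)|E|$ over $E \subset D$ with minimum value $0$, so $C_D$ is contained in the maximal minimizer at level $\lambda = h(D)$ — is exactly the paper's argument, and from \eqref{eq:kappaplus} (with $g_2 \equiv 1$ on $D$) it already yields $\kappa_D(x) \ls h(D)$ on $C_D$. The ``borderline value'' you then worry about is not an issue: by the isoperimetric inequality and boundedness of $D$ one has $h(D) \gs \Theta_d |D|^{-1/d} > 0$, so $\lambda = h(D)$ is squarely covered by the constraint $\lambda > 0$ in \eqref{eq:kappaplus}.

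The genuine gap is in the $\eps$-detour you introduce to avoid that nonexistent issue. The claim that ``$C_D$ remains a minimizer for the slightly larger parameter — its energy only decreases'' is false: decreasing energy does not preserve minimality, since competitors' energies also decrease (and typically by more). For the unit square, for instance, the maximal minimizer $D^\mu$ at level $\mu > h(D)$ strictly contains $C_D$ and a direct computation gives $\per(D^\mu) - \mu|D^\mu| < \per(C_D) - \mu|C_D|$, so $C_D$ is not a minimizer at any $\mu > h(D)$. The same flaw invalidates your final ``alternative'' (``$C_D$ minimizes $\per(E)-\mu|E|$ over $E\subset D$ for every $\mu \gs h(D)$''). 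The one variant you sketch that is salvageable is the comparison via Lemma \ref{lem:basiccomp}, which does give $C_D \subset D^{h(D)+\eps}$ — but to apply that lemma you must first show that the constrained minimizers $C_D$ and $D^{h(D)+\eps}$ are also minimizers of unconstrained curvature problems, which requires lifting the obstacle constraint $E\subset D$ as in \eqref{eq:kappailambda}; the paper only introduces that step later, in Lemma \ref{lem:inclusions}. In short, the direct argument at $\lambda = h(D)$ you stated first and then distrusted is correct; the detour that replaces it is both unnecessary and, in the justification given, incorrect.
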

\begin{proof}
We again consider the problem
\begin{equation}\label{eq:lambdaprob}\min_{E \subseteq D} \per(E) - \lambda |E|,\end{equation}
with $D^\lambda$ its maximal solution. Assume $\lambda>0$ is such that $|D^\lambda| \neq 0$, then by comparing with the empty set we have
\[\per(D^\lambda) - \lambda |D^\lambda| \ls 0,\]
which implies
\[\lambda \gs \frac{\per(D^\lambda)}{|D^\lambda|}\gs \min_{E \subset D}\frac{\per(E)}{|E|}=h(D),\]
which implies, recalling \eqref{eq:kappaplus}, that $\kappa_D(x)\gs h(D)$ for all $x \in D$. Similarly, considering \eqref{eq:lambdaprob} for $\lambda = h(D)$ we get
\[\per(D^{h(D)})-h(D)|D^{h(D)}| \ls 0,\]
so that $D^{h(D)}$ is a Cheeger set, and in fact by maximality $D^{h(D)} = C_D$, which in turn implies $\kappa_D(x) = h(D)$ for $x \in C_D$.
\end{proof}

\begin{proposition}\label{prop:curvsquare}
Let $S=(0,1)\times (0,1)$ be the unit square in $\R^2$. Then denoting by $Q^1:=(0,1/2)\times(0,1/2)$ the lower left quadrant we have
\begin{equation}\label{eq:curvsquare} \kappa_S(x) = \begin{cases}
h(S)=1/r_S:=2+\sqrt{\pi} & \text{ if } x \in C_S,\\
(x_1+x_2+\sqrt{2 x_1 x_2})^{-1} & \text{ if } x \in (S \setminus C_S)\cap Q^1,
\end{cases}\end{equation}
and similarly for the other quadrants. The Cheeger set $C_S$ (unique by convexity, see \cite{AltCas09}) is given by
\begin{equation*}\label{eq:cheegersquare}
C_S = \left\{ x \in S \, \middle \vert \, \dist\!\big(x, (r_S, 1-r_S)\times (r_S, 1-r_S) \big) < r_S\right\}.
\end{equation*}
\end{proposition}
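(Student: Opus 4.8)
The plan is to exploit the explicit characterization of the variational mean curvature $\kappa_S$ given by Lemma~\ref{lem:cheeger} and the construction \eqref{eq:kappaplus} in terms of the minimizers $S^\lambda$ of the inside problem \eqref{eq:Dlambdaplusprob1}. By Lemma~\ref{lem:cheeger} we already know that $\kappa_S = h(S)$ on the maximal Cheeger set $C_S$, so what remains is to identify $C_S$ itself and then compute $\kappa_S$ on the complement $S \setminus C_S$. For the first part I would invoke the known description of Cheeger sets of convex bodies in the plane: the Cheeger set of a convex set is obtained by taking the inner parallel body at distance $r_S$ and then rounding it out by a ball of radius $r_S$, where $r_S = |C_S|/\per(C_S)$; for the unit square this inner parallel set at distance $r_S$ is $(r_S, 1-r_S)^2$, and rounding gives exactly the formula for $C_S$ stated, with $r_S$ determined by the self-consistency equation $|C_S| = r_S \per(C_S)$. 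Solving this: $|C_S| = (1-2r_S)^2 + \pi r_S^2$ and $\per(C_S) = 4(1-2r_S) + 2\pi r_S$, and imposing $|C_S| = r_S\per(C_S)$ yields $(1-2r_S)^2 = r_S^2(4 - \pi)$, hence $1 - 2r_S = r_S\sqrt{4-\pi}$ — wait, one checks that the clean form is $1/r_S = 2 + \sqrt{\pi}$, which I would verify by substitution. This also gives $h(S) = 1/r_S = 2 + \sqrt\pi$.

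For the second part — the value of $\kappa_S$ on $(S \setminus C_S) \cap Q^1$ — the idea is that near the corner the obstacle-type minimizers $S^\lambda$ of \eqref{eq:Dlambdaplusprob1} are themselves explicitly describable. By the regularity theory for these minimizers, the free boundary $\partial S^\lambda \cap S$ consists of arcs of circles of curvature $\lambda$ meeting $\partial S$ either tangentially or as dictated by the obstacle, and for $\lambda$ in the range between $h(S)$ and $\infty$ the set $S^\lambda$ fills $S$ except for four corner regions, each cut off by a circular arc of radius $1/\lambda$ tangent to both sides of the square at that corner. A point $x = (x_1, x_2) \in Q^1$ lies on the boundary of such an arc precisely when $\lambda$ equals the curvature of the circle through $x$ tangent to both axes; the center of that circle is at $(\rho, \rho)$ with $\rho = 1/\lambda$, and tangency plus passing through $x$ forces $(x_1 - \rho)^2 + (x_2 - \rho)^2 = \rho^2$. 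Solving this quadratic for $\rho$: $2\rho^2 - 2(x_1+x_2)\rho + (x_1^2 + x_2^2) = 0$, so $\rho = \tfrac12\big((x_1+x_2) - \sqrt{(x_1+x_2)^2 - 2(x_1^2+x_2^2)}\big) = \tfrac12\big((x_1+x_2) - \sqrt{-(x_1-x_2)^2}\big)$ — this cannot be right, so the correct circle is not centered on the diagonal in general. Instead the relevant arc is the piece of a circle of radius $1/\lambda$ that is tangent to the two sides of the square near the corner but whose center need not be equidistant; reconsidering, the cut-off region at the corner is bounded by the arc of the circle of radius $1/\lambda$ tangent to \emph{both} sides $\{x_1 = 0\}$ and $\{x_2 = 0\}$, which does have center $(1/\lambda, 1/\lambda)$, and a point $x$ is reached by this family of arcs when $1/\lambda$ solves $(x_1 - 1/\lambda)^2 + (x_2 - 1/\lambda)^2 = 1/\lambda^2$; expanding gives $1/\lambda^2 - 2(x_1+x_2)/\lambda + (x_1^2+x_2^2) = 0$, i.e. $1/\lambda = (x_1 + x_2) + \sqrt{2x_1 x_2}$ after simplifying $(x_1+x_2)^2 - (x_1^2+x_2^2) = 2x_1 x_2$. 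Thus $\kappa_S(x) = \lambda = \big(x_1 + x_2 + \sqrt{2x_1 x_2}\big)^{-1}$, matching the claimed formula. I would then note the consistency check: on $\partial C_S \cap Q^1$, where $\dist(x, (r_S,1-r_S)^2) = r_S$, i.e. $x$ lies on the quarter-circle of radius $r_S$ centered at $(r_S, r_S)$, one has $(x_1 - r_S)^2 + (x_2-r_S)^2 = r_S^2$, which by the computation above gives exactly $x_1 + x_2 + \sqrt{2x_1x_2} = 1/r_S = h(S)$, so the two branches of \eqref{eq:curvsquare} agree on the interface, as they must.

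The main obstacle is justifying rigorously that the minimizers $S^\lambda$ of \eqref{eq:Dlambdaplusprob1} really have this four-corner circular-arc structure — i.e., that for $\lambda > h(S)$ the maximal minimizer is $S$ minus four congruent corner caps each bounded by an arc of curvature $\lambda$ — rather than merely bounding $\kappa_S$ above and below. For the upper bound $\kappa_S(x) \le (x_1+x_2+\sqrt{2x_1x_2})^{-1}$ one can use Lemma~\ref{lem:basiccomp} comparing with a ball (Example~\ref{ex:curvball}) tangent to the corner, exactly as in Lemma~\ref{lem:ballinc}; the matching lower bound requires showing that $x \notin S^\lambda$ for $\lambda$ slightly below the critical value, which follows from the classical fact (Giusti, Barozzi–Gonzalez–Tamanini) that in a convex body the free boundary of such obstacle minimizers near a corner is a single circular arc of the prescribed curvature meeting the two sides tangentially — this is where the convexity of $S$ and the two-dimensionality are essential, and where I would cite the relevant regularity result. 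Once the geometry of $S^\lambda$ is pinned down, the formula for $\kappa_S$ on $S \setminus C_S$ and the identification of $C_S$ as the maximal Cheeger set follow by the computations sketched above together with Lemma~\ref{lem:cheeger}.
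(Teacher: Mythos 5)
Your approach is essentially the one the paper takes: Lemma~\ref{lem:cheeger} disposes of $x \in C_S$, and on the corner region one solves the quadratic $(x_1-\rho)^2 + (x_2-\rho)^2 = \rho^2$ for the radius $\rho = 1/\kappa_S(x)$ of the circle tangent to both sides of $S$ at that corner. The ``main obstacle'' you correctly identify --- justifying that for $\lambda > h(S)$ the maximal minimizer $S^\lambda$ is exactly $S$ with four circular corner caps removed --- is a genuine ingredient, but in the paper it is handled by the rolling-ball characterization of Remark~\ref{rem:offsets} (citing \cite{StrZie97}, \cite{KawLac06} and \cite{LeoSar19}), not rederived; likewise the value of $r_S$ is just cited from \cite[Thm.~3]{KawLac06}. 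Two concrete slips in your write-up are worth fixing. First, your area formula $|C_S| = (1-2r_S)^2 + \pi r_S^2$ omits the four rectangles of total area $4r_S(1-2r_S)$; the correct values are $|C_S| = 1-(4-\pi)r_S^2$ and $\per(C_S) = 4-(8-2\pi)r_S$, and imposing $|C_S| = r_S\per(C_S)$ gives $(4-\pi)r_S^2 - 4r_S + 1 = 0$, whose relevant root rationalizes to $r_S = (2-\sqrt\pi)/(4-\pi) = 1/(2+\sqrt\pi)$, confirming $h(S) = 2+\sqrt\pi$. Second, the quadratic for $\rho$ has two positive roots $\rho_\pm = x_1+x_2 \pm \sqrt{2x_1x_2}$, and neither you nor the paper's terse proof says explicitly why the larger root is the right one; a one-line justification is that (taking $x_1 \ls x_2$) one always has $\rho_- \ls x_2 = \max(x_1,x_2) \ls \rho_+$, so the corner cap of radius $\rho$ contains $x$ precisely for $\rho > \rho_+$, whence the infimum in \eqref{eq:kappaplus} is attained at $\lambda = 1/\rho_+$. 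Your consistency check that the two branches of \eqref{eq:curvsquare} agree on $\partial C_S$ is a nice addition the paper does not make explicit.
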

\begin{proof}If $x \in C_S$ we use Lemma \ref{lem:cheeger}; the value of $r_S$ can be found for example in \cite[Thm.~3]{KawLac06}. If $x \notin C_S$, without loss of generality we can assume that $x=(x_1,x_2)$ is in $(S \setminus C_S)\cap Q^1$. Now, $x$ belongs to the circle centered at $(R(x),R(x))$ with radius $R(x)$, that is
\[(x_1 - R(x))^2+(x_2 - R(x))^2=R(x)^2.\]
This is a quadratic equation for $R(x)$ that we can solve to find $\kappa_S(x)=1/R(x)$.
\end{proof}

\begin{remark}\label{rem:offsets}
It is proved in \cite[Thm.~3.32(i)]{StrZie97} that for a convex planar set $E$ and $\lambda$ small enough, $E^\lambda$ can be written as a union of balls of radius $1/\lambda$. In this situation, the proof of \cite[Thm.~1]{KawLac06} building up on this characterization as a union implies in particular that 
\[E^\lambda = [E]_i^{1/\lambda} + \frac{1}{\lambda}B(0,1),\]
where $[E]_i^{1/\lambda} \subset E$ is the $1/\lambda$-offset of $E$ in the direction of its inner normal, so that $E^\lambda$ is obtained by a ``rolling ball'' procedure. Furthermore, it has been recently proven \cite{LeoSar19} that the same characterization holds for more general planar sets, namely Jordan domains with no necks.
\end{remark}

\section{Convergence for indicatrices with noise}\label{sec:local}
In this section we prove Theorem \ref{thm:localhausdorff} on Hausdorff convergence of level sets for denoising the indicatrix of a bounded finite perimeter set $D \subset B(0,1)$, with the variational mean curvature $\kappa_D \in L^1(\R^d)$ constructed with the choices of Definition \ref{def:dlambda} used in the formulas \eqref{eq:kappaplus} and \eqref{eq:kappaminus}, and with noise and parameter choice controlled by \eqref{eq:hardparamchoice}. To this end, let $u_{\alpha,0}$ be the precise representative of the minimizer of \eqref{eq:ROFpsi} with $f=1_D$ and $w=0$, that is, with no noise added. We denote by 
\[\kappa_\alpha := v_{\alpha, 0} = \frac{1}{\alpha}\psi'(1_D - u_{\alpha,0})\]
the corresponding variational mean curvature associated to the level sets of $u_{\alpha,0}$ through duality in Proposition \ref{prop:dualROFpsi}. The definition of $\kappa_D$ also provides us with a natural precise representative for it; we will implicitly use these precise representatives in the rest of the section.

Now, using Corollary \ref{cor:nonewlevels} and since $u_{\alpha,0}$ is the result of denoising $1_D$ which has values in $[0,1]$, we only need to consider $s \in (0,1)$ and consequently $E_\alpha^s = \{ u_{\alpha, 0} >s\}$ always denotes upper level sets (c.f.~Proposition \ref{prop:slicedROFpsi}).

We implement the local strategy of \cite[Thm.~2]{ChaDuvPeyPoo17}, which requires that $v_{\alpha,0}$ is $L^d$-equiintegrable only on $K_\delta = \{\dist(\cdot, \partial D)\gs\delta\}$ for each $\delta >0$. This equiintegrability is in turn a consequence of Lemma \ref{lem:ballinc} and Proposition \ref{prop:curvgodown} below, which combine to give the bound 
\begin{equation}\label{eq:curvbound}\|v_{\alpha, 0}\|_{L^\infty(K_\delta)}\ls \frac{d}{\delta}.\end{equation}

\begin{proposition}
\label{prop:curvgodown}
The noiseless dual variable $\kappa_\alpha$ satisfies $|\kappa_\alpha| \ls |\kappa_D|$ almost everywhere.
\end{proposition}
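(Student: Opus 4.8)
The plan is to use the comparison lemma (Lemma \ref{lem:basiccomp}) together with the fact that $\kappa_D$ is a variational mean curvature for the full level-set problems defining $\kappa_\alpha$. Recall that by Proposition \ref{prop:slicedROFpsi} the level sets $E_\alpha^s = \{u_{\alpha,0} > s\}$ minimize, for a.e.\ $s \in (0,1)$, the functional $E \mapsto \per(E) + \frac{1}{\alpha}\int_E \psi'(s - 1_D) = \per(E) - \int_E \widetilde{\kappa}_s$, where $\widetilde{\kappa}_s := \frac{1}{\alpha}\psi'(1_D - s)$; note $\widetilde{\kappa}_s = \frac{1}{\alpha}\psi'(1-s) > 0$ on $D$ and $\widetilde{\kappa}_s = \frac{1}{\alpha}\psi'(-s) = -\frac{1}{\alpha}\psi'(s) < 0$ on $\R^d \setminus D$, using that $\psi$ is even and strictly convex with $\psi(0) = 0$, so that $\psi'$ is odd and strictly increasing with $\psi'(t) > 0$ for $t > 0$. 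Thus $\widetilde{\kappa}_s$ is a variational mean curvature for $E_\alpha^s$. On the other hand, $\kappa_D$ is a variational mean curvature for $D$ itself, with $\kappa_D > 0$ on $D$ and $\kappa_D < 0$ on $\R^d \setminus D$.

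First I would compare $E_\alpha^s$ with $D$ from the inside. The claim $|\kappa_\alpha| \ls |\kappa_D|$ on $D$ amounts to $\kappa_\alpha = \frac{1}{\alpha}\psi'(1-u_{\alpha,0}) \ls \kappa_D$ there, i.e.\ (since $\psi'$ is increasing) to a lower bound on $u_{\alpha,0}$ on the relevant region. The correct statement to aim for is that for each $\lambda$, comparing the maximal minimizer $D^\lambda$ of $\min_{E \subset D}\per(E) - \lambda|E|$ against the super-level sets of $u_{\alpha,0}$ forces an inclusion, and then to read off the curvature bound through the characterizations \eqref{eq:kappaplus}–\eqref{eq:kappaminus}; more directly, one applies Lemma \ref{lem:basiccomp} to $E_\alpha^s$ (curvature $\widetilde{\kappa}_s$) and $D$ (curvature $\kappa_D$). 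Wherever $\widetilde{\kappa}_s > \kappa_D$ on $E_\alpha^s \setminus D$ we would get $|E_\alpha^s \setminus D| = 0$; but on $\R^d \setminus D$ we have $\widetilde{\kappa}_s = -\frac{1}{\alpha}\psi'(s) < 0$ and $\kappa_D < 0$, so this does not hold everywhere, and instead the right route is to compare within $D$. The cleaner argument: on the set $\{\kappa_\alpha > \kappa_D\}$ one has $\widetilde{\kappa}_s > \kappa_D$ precisely where $s$ is such that this set is part of the relevant level set, and applying the lemma with the two curvatures reversed (using minimality of the level sets for the $\widetilde{\kappa}_s$-problem and of $D^\lambda$, $F^\lambda$ for the $\kappa_D$-problems) yields the measure-zero conclusion. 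Symmetrically, on $\R^d \setminus D$ one compares the lower level sets / complements to get $-\kappa_\alpha \ls -\kappa_D$, i.e.\ $\kappa_\alpha \gs \kappa_D$, hence $|\kappa_\alpha| \ls |\kappa_D|$ there as well.

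Concretely, I would argue pointwise via the $\lambda$-level sets. Fix $\lambda > 0$. On $D$: since $\widetilde{\kappa}_s = \frac{1}{\alpha}\psi'(1-s)$ is a positive constant $c_s$ there, the level set $E_\alpha^s \cap D$ relates to $D^{c_s}$; by Lemma \ref{lem:basiccomp} applied to $D^\lambda$ (curvature $\le \lambda$, namely $=\lambda$ on $D^\lambda$) and $E_\alpha^s$ one gets, whenever $\lambda < c_s$, that $D^\lambda \subseteq E_\alpha^s$ up to null sets, which through \eqref{eq:kappaplus} gives $\kappa_D(x) \ls c_s = \widetilde{\kappa}_s(x) = \kappa_\alpha$-threshold; taking the correct $s$ (namely $s = u_{\alpha,0}(x)$ at a point, via the coarea slicing) yields $\kappa_\alpha(x) \ls \kappa_D(x)$ for a.e.\ $x \in D$. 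On $\R^d \setminus D$: using $F^\lambda = \R^d \setminus D^{-\lambda}$ and the outside problem \eqref{eq:outsideprob}, the analogous comparison with the lower level sets $\{u_{\alpha,0} < s\}$ — or equivalently with complements of upper level sets — and \eqref{eq:kappaminus} gives $-\kappa_\alpha(x) \ls -\kappa_D(x)$, i.e.\ $\kappa_\alpha(x) \gs \kappa_D(x)$, for a.e.\ $x \notin D$. Combining the two regions and recalling $\kappa_D > 0$ on $D$, $\kappa_D < 0$ outside, together with $\kappa_\alpha = \frac{1}{\alpha}\psi'(1_D - u_{\alpha,0})$ having the matching sign pattern (since $0 \ls u_{\alpha,0} \ls 1$ by Corollary \ref{cor:nonewlevels}, so $\kappa_\alpha \gs 0$ on $D$ and $\kappa_\alpha \ls 0$ outside), we conclude $|\kappa_\alpha| \ls |\kappa_D|$ a.e.

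The main obstacle is bookkeeping the passage from the slice-wise inclusions of level sets (which is what Lemma \ref{lem:basiccomp} directly delivers, and only for a.e.\ $s$) to the a.e.-pointwise inequality between the two curvature \emph{functions} $\kappa_\alpha$ and $\kappa_D$ — this requires the coarea/layer-cake dictionary already used for Proposition \ref{prop:slicedROFpsi} and a Fubini argument over $s$, plus care that both $\kappa_\alpha$ and $\kappa_D$ are being taken in their precise representatives so that the exceptional null sets (including the possibility $|\partial D| > 0$) are handled consistently. A secondary point is making sure the sign of $\widetilde{\kappa}_s$ on $\R^d \setminus D$ is treated correctly when invoking the comparison, which is why the outside estimate must go through the lower level sets / complements as in Remark \ref{rem:signs}.
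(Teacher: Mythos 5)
Your skeleton matches the paper's: compare the approximants $D^{\pm\lambda}$ with the level sets $E_\alpha^s$, then translate the inclusions into a pointwise inequality through the constructive definitions \eqref{eq:kappaplus}--\eqref{eq:kappaminus}. But the step you flag yourself as the ``main obstacle'' --- passing from the $s$-parametrized inclusion $D^\lambda \subset E_\alpha^s$ (valid when $\lambda < \psi'(1-s)/\alpha$) to the a.e.-pointwise inequality $\kappa_\alpha(x) \ls \kappa_D(x)$ --- is precisely where the work is, and the Fubini/coarea route you sketch is not executed and would be delicate (the null sets depend jointly on $\lambda$ and $s$). The paper instead runs a clean pointwise contradiction with the precise representatives: fix $x \in D$, set $s := u_{\alpha,0}(x)$ so that $\kappa_\alpha(x) = \psi'(1-s)/\alpha$, and assume $\kappa_\alpha(x) \gs \kappa_D(x) + \eps$. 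Then $x \in E_\alpha^{s-\delta}$ and $x \notin E_\alpha^{s+\delta}$ for every $\delta > 0$; the first fact combined with the inclusion Lemma \ref{lem:inclusions} gives $x \notin D^\lambda$ for $\lambda < \psi'(1-s-\delta)/\alpha$, while the assumed bound and the infimum in \eqref{eq:kappaplus} force $x \in D^\lambda$ for $\lambda \gs \psi'(1-s)/\alpha - \eps$. Since $\psi' $ is continuous (Assumption \eqref{eq:psiassum}), $\delta$ can be chosen small enough that these two conclusions conflict.

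Two further points are missing and must be addressed. First, $D^\lambda$ is a \emph{constrained} minimizer ($E \subset D$), so Lemma \ref{lem:basiccomp} does not apply to it as stated; before any comparison you must lift the obstacle and observe that $D^\lambda$ is an unconstrained minimizer of $E \mapsto \per(E) - \int_E \kappa_i^\lambda$ with $\kappa_i^\lambda = \lambda 1_D + \kappa_D 1_{\R^d\setminus D}$ (and similarly with $\kappa_o^\lambda$ for $D^{-\lambda}$), which is how the paper sets up Lemma \ref{lem:inclusions}. Your parenthetical description of the curvature of $D^\lambda$ as ``$\ls \lambda$, namely $= \lambda$ on $D^\lambda$'' is not the one that makes the comparison close. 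Second, the outside comparison through \eqref{eq:kappaminus} only gives the clean inequality where the density $g_2(x) = 1$, i.e.\ on $B(0,2)$; for points farther out $\kappa_D$ is distorted by $g_2$ (see Proposition \ref{prop:emptyFlambda}). The missing ingredient is Lemma \ref{lem:convsupport}, which shows $\supp u_{\alpha,0} \subset \conv D$, so the outside comparison may be restricted to $\conv D \setminus \overline D$ where $\kappa_\alpha$ can be nonzero and $g_2 \equiv 1$. Without this the argument on $\R^d \setminus D$ does not close.
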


To prove this proposition, which depends crucially on Assumption \eqref{eq:psiassum}, we will use the following lemmas:
\begin{lemma}\label{lem:convsupport}
 The denoised solution $u_{\alpha,0}$ with $f=1_D$ and $w=0$ satisfies \[u_{\alpha,0}(x) = 0 \text{ for a.e. }x \in \R^d \setminus \conv D.\]
\end{lemma}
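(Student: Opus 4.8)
The statement asserts that the noiseless denoised solution $u_{\alpha,0}$ of $1_D$ vanishes almost everywhere outside $\conv D$. The natural strategy is a comparison argument combining two facts: on the one hand, $u_{\alpha,0}\gs 0$ because $f=1_D\gs 0$ and Proposition~\ref{prop:ROFpsicomparison} gives $u^0_{\alpha,0}=0\ls u^{1_D}_{\alpha,0}$; on the other, $u_{\alpha,0}$ is bounded above by $1$ by Corollary~\ref{cor:nonewlevels}. So it suffices to show $\{u_{\alpha,0}>s\}\subseteq \conv D$ up to measure zero for every $s\in(0,1)$, which by coarea then forces $u_{\alpha,0}=0$ a.e. outside $\conv D$.

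First I would fix $s\in(0,1)$ and recall from Proposition~\ref{prop:slicedROFpsi} that $E^s_\alpha=\{u_{\alpha,0}>s\}$ minimizes, among sets of finite mass,
\[
E\mapsto \per(E)+\frac{1}{\alpha}\int_E \psi'(s-1_D)
= \per(E)-\frac{\psi'(1-s)}{\alpha}|E\cap D|+\frac{\psi'(s)}{\alpha}|E\setminus D|,
\]
and since $\psi$ is strictly convex, even, with $\psi(0)=0$, we have $\psi'(1-s)>0$ and $\psi'(s)>0$ for $s\in(0,1)$ (as $\psi'$ is increasing and $\psi'(0)=0$). Thus the functional has a constant \emph{positive} weight $c:=\psi'(s)/\alpha>0$ penalizing mass outside $D$. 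The key geometric step is then: since $D\subset \conv D$ and $|E^s_\alpha|<+\infty$, intersecting a finite-measure competitor with the convex set $\conv D$ cannot increase the perimeter (the same fact used in the proof of Proposition~\ref{prop:emptyFlambda}, see \cite[Lem.~3.5]{BriTor15}), while it strictly decreases the $|E\setminus D|$ term whenever $|E^s_\alpha\setminus\conv D|>0$ and leaves the $|E\cap D|$ term unchanged. Hence $E^s_\alpha\cap\conv D$ would be a strictly better competitor unless $|E^s_\alpha\setminus\conv D|=0$.

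I would then conclude as follows: for each $s\in(0,1)$ we get $|\{u_{\alpha,0}>s\}\setminus\conv D|=0$; taking a countable sequence $s_n\downarrow 0$ and using that these level sets are nested, $|\{u_{\alpha,0}>0\}\setminus\conv D|=0$, i.e. $u_{\alpha,0}=0$ a.e. on $\R^d\setminus\conv D$. The main obstacle — which is really the only non-routine point — is justifying that intersection with the convex set $\conv D$ does not increase the perimeter; this requires $|E^s_\alpha|<+\infty$ (available here because $u_{\alpha,0}\in L^{d/(d-1)}$ has level sets of finite measure at positive values, cf.\ the support considerations in Proposition~\ref{prop:existconv}) and is exactly the lemma cited from \cite{BriTor15}, whose planar case is elementary but whose general-dimension statement one simply invokes. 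A minor point to handle carefully is the strictness of the perimeter/volume comparison, which only needs the volume term to be strict; the perimeter term need only be non-increasing, so no boundary-regularity issues arise.
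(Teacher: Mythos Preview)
Your argument is correct and rests on the same geometric fact as the paper's proof: intersecting a finite-measure set with the convex set $\conv D$ does not increase its perimeter, while it strictly improves the data term. The packaging differs slightly: you first invoke Proposition~\ref{prop:ROFpsicomparison} and Corollary~\ref{cor:nonewlevels} to pin $u_{\alpha,0}\in[0,1]$ and then argue level-set by level-set through Proposition~\ref{prop:slicedROFpsi}, whereas the paper compares $u:=u_{\alpha,0}$ directly with the truncated competitor $u_c:=u\,1_{\conv D}$, using the coarea formula to handle both positive and negative levels simultaneously without appealing to the comparison principle. Both routes use the same \cite{BriTor15} lemma at the decisive step; yours is slightly more modular (it recycles the sliced problem and comparison results already established), while the paper's is self-contained and does not rely on those later-stated propositions.
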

\begin{proof}
Denote $u:=u_{\alpha,0}$, and assume for the sake of contradiction that \[|\{ u \neq 0\}\setminus \conv D|>0.\] This implies, defining $u_c:= u 1_{\conv D}$, that
\begin{align*}\int_{\R^d} \psi(u-1_D) &= \int_{\conv D} \psi(u-1_D) + \int_{\R^d \setminus \conv D} \psi(u) \\&> \int_{\conv D} \psi(u-1_D) = \int_{\R^d} \psi(u_c-1_D).\end{align*}
Moreover we can write the coarea formula for $u$ as
\begin{align*}\TV(u)&=\int_0^{+\infty} \per(\{u>s\}) \dd s + \int_{-\infty}^0 \per(\{u<s\}) \dd s\\ 
&\gs \int_0^{+\infty} \per(\{u>s\}\cap \conv D) \dd s + \int_{-\infty}^0 \per(\{u<s\} \cap \conv D) \dd s \\
& = \int_0^{+\infty} \per(\{u_c>s\}) \dd s + \int_{-\infty}^0 \per(\{u_c<s\}) \dd s = \TV(u_c),
\end{align*}
where we have used that the level sets $\{u>s\}$ for $s>0$ and $\{u<s\}$ for $s<0$ must have finite mass since $u \in L^{d/(d-1)}$, and the convexity of $\conv D$. These two inequalities mean that $u$ could not be a minimizer.
\end{proof}

\begin{lemma}
\label{lem:inclusions}
Let $0 < s \ls 1$. Then 
\begin{equation}\label{eq:DlambsubsE}D^\lambda \subset E_\alpha^s \text{ when } 0 < \lambda < \psi'(1-s)/\alpha,\end{equation} whereas 
\begin{equation}\label{eq:EsubsDlamb}E_\alpha^s \subset D^{-\lambda} \text{ for }0 > -\lambda > -\psi'(s)/\alpha.\end{equation}
\end{lemma}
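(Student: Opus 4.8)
\textbf{Proof plan for Lemma \ref{lem:inclusions}.}
The key observation is that, by Proposition \ref{prop:slicedROFpsi} with $f = 1_D$ and $w = 0$, the level set $E_\alpha^s = \{u_{\alpha,0} > s\}$ minimizes among finite-mass sets the functional
\[ E \mapsto \per(E) + \frac{1}{\alpha}\int_E \psi'(s - 1_D) = \per(E) - \frac{\psi'(1-s)}{\alpha}|E \cap D| + \frac{\psi'(s)}{\alpha}|E \setminus D|, \]
where we used that $\psi$ is even, so $\psi'$ is odd and $\psi'(s-1) = -\psi'(1-s)$. Thus $E_\alpha^s$ admits a variational mean curvature $\kappa$ which equals $\psi'(1-s)/\alpha$ on $D$ and $-\psi'(s)/\alpha$ on $\R^d \setminus D$ (note both values are positive on $D$ and negative off $D$ since $\psi'$ is increasing on $[0,\infty)$ and $0 < s < 1$). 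Meanwhile, by Definition \ref{def:dlambda}, $D^\lambda$ is the maximal minimizer of $\per(E) - \lambda|E|$ over $E \subset D$, and $D^{-\lambda} = \R^d \setminus F^\lambda$ where $F^\lambda$ minimizes $\per(F) - \lambda\int_F g_2$ over $F \subset \R^d \setminus D$; these correspond to values $\kappa_D = \lambda$ on $D$ (via \eqref{eq:kappaplus}) and $\kappa_D = -\lambda$ off $D$ (via \eqref{eq:kappaminus}).

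For \eqref{eq:DlambsubsE}, I would apply Lemma \ref{lem:basiccomp} with $E_1 = D^\lambda$ (curvature $\kappa_1$ which one can take to equal $\lambda$ on $D$; since $D^\lambda \subset D$ the comparison only needs the value of $\kappa_1$ on $D^\lambda$) and $E_2 = E_\alpha^s$ (curvature $\kappa_2$ equal to $\psi'(1-s)/\alpha$ on $D$). On the set $D^\lambda \setminus E_\alpha^s \subset D$ we have $\kappa_1 = \lambda < \psi'(1-s)/\alpha = \kappa_2$ by hypothesis, so Lemma \ref{lem:basiccomp} yields $|D^\lambda \setminus E_\alpha^s| = 0$, i.e. $D^\lambda \subset E_\alpha^s$ up to null sets. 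For \eqref{eq:EsubsDlamb}, I would similarly apply Lemma \ref{lem:basiccomp} with $E_1 = E_\alpha^s$ and $E_2 = D^{-\lambda}$: on $E_\alpha^s \setminus D^{-\lambda} = E_\alpha^s \cap F^\lambda \subset \R^d \setminus D$, we have $\kappa_{E_\alpha^s} = -\psi'(s)/\alpha < -\lambda = \kappa_{D^{-\lambda}}$ precisely when $\lambda < \psi'(s)/\alpha$, which is the hypothesis $0 > -\lambda > -\psi'(s)/\alpha$. Here one should record that $D^{-\lambda}$ indeed admits $-\lambda$ as its variational mean curvature value on $\R^d \setminus D$: this follows from the last characterization in Definition \ref{def:dlambda}, namely $D^{-\lambda}$ is the minimal minimizer of $\per(E) + \lambda\int_E g_2$ over $E \supset D$, which is exactly \eqref{eq:varCurvE} with $\kappa = -\lambda g_2$ restricted off $D$; and since $g_2 \equiv 1$ on $B(0,2) \supset D^{-\lambda}$ (using Lemma \ref{lem:boundedcomplement} for $\lambda$ large, and for small $\lambda$ the statement is still fine as we only compare on $E_\alpha^s \cap F^\lambda$ where one uses $g_2 \geq \ldots$; more simply, $\kappa_{D^{-\lambda}} \leq -\lambda g_2 \leq -\lambda$ off $D$ suffices for the comparison to go through since we only need $\kappa_{E_\alpha^s} < \kappa_{D^{-\lambda}}$ and $-\psi'(s)/\alpha < -\lambda \leq -\lambda g_2$).

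The only delicate point is bookkeeping the signs and the fact that $\psi'$ is odd and increasing on $[0,\infty)$ (both recorded in the Remark following Assumption \eqref{eq:psiassum} and in Proposition \ref{prop:existconv}), together with making sure the variational curvature of $E_\alpha^s$ is being compared against $\kappa_1$, $\kappa_2$ only on the relevant difference sets, which lie entirely inside $D$ (for the first inclusion) or entirely inside $\R^d \setminus D$ (for the second) — so the ambiguity of variational mean curvatures off the relevant region is harmless. No genuine obstacle is expected beyond this sign and domain bookkeeping; the heavy lifting is all in Lemma \ref{lem:basiccomp} and Proposition \ref{prop:slicedROFpsi}, both already available.
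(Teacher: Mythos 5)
Your strategy matches the paper's: compare $D^{\pm\lambda}$ with $E_\alpha^s$ via variational mean curvatures and the comparison principle. One welcome simplification: you take for $E_\alpha^s$ the ``per-slice'' curvature $-\frac{1}{\alpha}\psi'(s-1_D)$ furnished by Proposition~\ref{prop:slicedROFpsi}, which is constant on $D$ and on $D^c$, whereas the paper works with $\kappa_\alpha=\frac{1}{\alpha}\psi'(1_D-u_{\alpha,0})$ and therefore still needs the observation that $u_{\alpha,0}\ls s$ on $(E_\alpha^s)^c$ together with monotonicity of $\psi'$ to reach the pointwise bound you invoke directly. Either curvature is valid.

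The genuine gap is the passage from constrained minimality to an honest variational mean curvature. To apply Lemma~\ref{lem:basiccomp} you need $D^\lambda$ and $D^{-\lambda}$ to minimize $E\mapsto\per(E)-\int_E\kappa$ over \emph{unconstrained} competitors, as in \eqref{eq:varCurvE}, but they are defined through the \emph{obstacle} problems \eqref{eq:insideprob} and \eqref{eq:outsideprob}. Your parenthetical ``curvature $\kappa_1$ which one can take to equal $\lambda$ on $D$'' asserts precisely the nontrivial obstacle-lifting step without justifying it; the paper supplies it by citing \cite[Lem.~9]{IglMerSch18}, which yields the unconstrained curvature $\kappa_i^\lambda=\lambda 1_D+\kappa_D 1_{\R^d\setminus D}$ for $D^\lambda$ and $\kappa_o^\lambda=-\lambda g_2 1_{\R^d\setminus D}-\kappa_D 1_D$ for $D^{-\lambda}$. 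You should either cite this lifting or, more economically, skip it by unrolling Lemma~\ref{lem:basiccomp}: compare $D^\lambda$ against $D^\lambda\cap E_\alpha^s$, which is admissible in the constrained problem since it lies in $D$, compare $E_\alpha^s$ against $D^\lambda\cup E_\alpha^s$ (finite mass), sum the two inequalities and use \eqref{eq:subaddper}. Two smaller slips: since $0<g_2\ls1$ and $\lambda>0$ you have $-\lambda g_2\gs-\lambda$, not $\ls$ as you first write (your subsequent chain $-\psi'(s)/\alpha<-\lambda\ls-\lambda g_2$ is the correct one); and the phrasing ``$\kappa_D=\lambda$ on $D$'' conflates the (nonconstant) curvature $\kappa_D$ of $D$ itself with the value on $D$ of the lifted curvature of $D^\lambda$.
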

\begin{proof}
First, notice that the problems \eqref{eq:insideprob} and \eqref{eq:outsideprob} satisfied by $D^\lambda$ and $D^{-\lambda}$ are of obstacle type, so that as in \cite[Lem.~9]{IglMerSch18}, one can lift the obstacle constraint and conclude that $D^\lambda$ minimizes
\begin{equation}\label{eq:kappailambda} E \mapsto \per(E) - \int_E \kappa_i^\lambda \quad \text{with} \quad \kappa_i^\lambda = \lambda 1_{D} + \kappa_D 1_{\R^d \setminus D}\end{equation}
whereas $D^{-\lambda}$ minimizes
\[ E \mapsto \per(E) - \int_E \kappa_o^\lambda \quad \text{with} \quad \kappa_o^\lambda =-\lambda g_2 1_{\R^d \setminus D} - \kappa_D 1_{D}.\]
Therefore, we can write
\[\per(E_\alpha^s \cap D^\lambda) - \int_{E_\alpha^s \cap D^\lambda} \kappa_i^\lambda \gs \per(D^\lambda) - \int_{D^\lambda} \kappa_i^\lambda.\]
On the other hand, the level-set $E_\alpha^s$ has a curvature $\kappa_\alpha$, which allows writing
\[ \per(E_\alpha^s \cup D^\lambda) - \int_{E_\alpha^s \cup D^\lambda} \kappa_\alpha \gs \per(E_\alpha^s) - \int_{E_\alpha^s} \kappa_\alpha.\]
Summing these two inequalities, we obtain
\[\int_{D^\lambda \setminus E_\alpha^s} \kappa_i^\lambda \gs \int_{D^\lambda \setminus E_\alpha^s} \kappa_\alpha \]
that rewrites, since $D^\lambda \subset D$ and using the definition of $\kappa_i^\lambda$ in \eqref{eq:kappailambda}, 
\begin{equation} \int_{D^\lambda \setminus E_\alpha^s} \left( \lambda - \frac{\psi' (1-u_\alpha)}{\alpha} \right) \gs 0. \label{eq:contr} \end{equation} 
Now, on $(E_\alpha^s)^c$ by definition $u_\alpha \ls s$ which, since $\psi'$ is strictly increasing, implies $\lambda - \psi'(1-u_\alpha)/\alpha \ls \lambda - \psi'(1-s)/\alpha$.
Therefore, for $\lambda < \psi'(1-s)/\alpha$, \eqref{eq:contr} can hold only if $|D^\lambda \setminus E_\alpha^s| = 0$, that is $D^\lambda \subset E_\alpha^s$ a.e.~
 
Similarly, we can write
\[\per(E_\alpha^s \cup D^{-\lambda}) - \int_{E_\alpha^s \cup D^{-\lambda}} \kappa_o^\lambda \gs \per(D^{-\lambda}) - \int_{D^{-\lambda}} \kappa_o^\lambda,\]
\[\per(E_\alpha^s \cap D^{-\lambda}) - \int_{E_\alpha^s \cap D^{-\lambda}} \kappa_\alpha \gs\per(E_\alpha^s) - \int_{E_\alpha^s} \kappa_\alpha,\]
to sum these inequalities and, using $(D^{-\lambda})^c \subset D^c$ and $u_\alpha > s$ on $E^s_\alpha$, obtain
\[ 0 \gs \int_{E_\alpha^s \setminus D^{-\lambda}} \left(\frac{\psi'(u_\alpha)}{\alpha} - \lambda g_2 \right) > \int_{E_\alpha^s \setminus D^{-\lambda}} \left(\frac{\psi'(s)}{\alpha} - \lambda g_2 \right).\]
Hence, since we have $g_2 \ls 1$, as soon as $0 < \lambda < \psi'(s)/\alpha$, we obtain $E_\alpha^s \subset D^{-\lambda}$ a.e.~
\end{proof}

\begin{proof}[Proof of Proposition \ref{prop:curvgodown}] First we take $x \in D$ (this implies by definition $\kappa_D(x) \gs 0$) and define $s:=u_{\alpha, 0}(x)$, implying $\kappa_\alpha(x) = \psi'(1-s)/\alpha$f, and assume that for some $\eps>0$
\begin{equation}\label{eq:poswrongassum}\kappa_\alpha(x) = \frac{\psi'(1-s)}{\alpha}\gs \kappa_D(x)+\eps,\end{equation} to then use Lemma \ref{lem:inclusions} to derive a contradiction. By definition of the level sets we have that for all $\delta > 0$,
\begin{equation}\label{eq:defls}x \in E_\alpha^{s-\delta}, \text{ and } x \notin E_\alpha^{s+\delta}.\end{equation}
This, combined with \eqref{eq:DlambsubsE} implies that $x \notin D^\lambda$ whenever $0< \lambda < \psi'(1-s-\delta)/\alpha$. On the other hand, since $g_2(x)=1$ the construction \eqref{eq:kappaplus} of $\kappa_D$, \eqref{eq:poswrongassum} and \eqref{eq:EsubsDlamb} give $x \in D^\lambda$ for all $\lambda \gs \psi'(1-s)/\alpha-\eps> 0$, where for the last inequality we have used \eqref{eq:poswrongassum}. Choosing $\delta$ such that \[\psi'(1-s)-\psi'(1-s-\delta) \ls \alpha \eps,\] which is possible since $\psi \in \mathcal{C}^1(\R)$, these two statements are contradictory and therefore we must have $\kappa_\alpha(x) \ls \kappa_D(x)$ for all $x \in D$.

Now, if $x \in \R^d \setminus \overline D$, by Lemma \ref{lem:convsupport} we can assume $x \in \conv D \setminus \overline D$, since otherwise we would have $\kappa_\alpha(x) = 0$ and the inequality is trivially satisfied. This implies in particular that $g_2(x)=1$ (see \eqref{eq:defgR})  and we can proceed similarly as above. We have $\kappa_\alpha(x) = \psi'(-s)/\alpha$ and $\kappa_D(x) \ls 0$, and failure of the statement means that for some $\eps > 0$ we have
\begin{equation*}\label{eq:negwrongassum}\kappa_\alpha(x)=\frac{\psi'(-s)}{\alpha}\ls \kappa_D(x)-\eps.\end{equation*}
As before, for any $\delta$ \eqref{eq:defls} holds and \eqref{eq:EsubsDlamb} then implies that $x \in D^{-\lambda}$ as soon as $0> -\lambda > \psi'(-s+\delta)/\alpha$. However the definition of $\kappa_D$ \eqref{eq:kappaminus} and that $g_2(x)=1$ imply that $x \notin D^{-\lambda}$ if $-\lambda < \kappa_D(x) \ls \psi'(-s)/\alpha+ \eps < 0$, so that if $\delta$ is such that $\psi'(-s+\delta)-\psi'(-s) \ls \alpha \eps$ we again derive a contradiction. 
\end{proof}

\begin{remark}It might seem slightly surprising that even though the construction of $\kappa_D$ depends on the density $g$, as has been seen in Proposition \ref{prop:emptyFlambda}, we can still obtain the inequality $|\kappa_\alpha| \ls |\kappa_D|$. There are two reasons for this. First, we were able to bound the support of $u_\alpha$ in Lemma \ref{lem:convsupport}, allowing to avoid the unintuitive behaviour of $\kappa_D$ for small negative values and far away from $D$. Second, with our particular choice \eqref{eq:defgR} we have $g_2 = 1$ in $\conv D$, so we can still obtain the desired comparison without distorting the values of $\kappa_\alpha$ in question.
\end{remark}

As a consequence of \eqref{eq:curvbound} we can obtain uniform density estimates for the level-sets $E^s_{\alpha, w} := \{u_{\alpha,w} >s\}$ outside of $K_\delta$ with scale $r_\delta$ and constant $C_\delta$ possibly degenerating as $\delta \to 0$. These are proved in Proposition \ref{prop:densityest} of Appendix \ref{sec:densityest}. A further consequence of these density estimates is the following compact support result.

\begin{proposition}\label{prop:compsupp}Assume a parameter choice such that 
\begin{equation}\label{eq:isoperstabil}\|v_{\alpha, w} - v_{\alpha, 0}\|_{L^d(\R^d)} \ls C_0 < \Theta_d,\end{equation}
where $v_{\alpha,w}$ is the duality certificate for \eqref{eq:ROFpsi} of Proposition \ref{prop:dualROFpsi}, and that $f = 1_D$ for $D$ bounded. Then there is $R>0$ such that
\begin{equation*}\label{eq:compsupp}\supp u_{\alpha, w} \subset B(0,R),\end{equation*}
with $R$ depending on $C_0$ but not on the specific $\alpha$ and $w$.
\end{proposition}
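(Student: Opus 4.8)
The plan is to bound each relevant level set of $u_{\alpha,w}$ by a fixed ball, using the isoperimetric inequality \eqref{eq:isoper} together with a differential inequality for the ``mass outside a ball'' function, the smallness of the curvature far from $D$ coming from \eqref{eq:isoperstabil}. Fix $\rho_1>0$ with $\conv D\subset B(0,\rho_1)$. First I would record that the noiseless certificate $v_{\alpha,0}=\tfrac1\alpha\psi'(1_D-u_{\alpha,0})$ vanishes a.e.\ on $\R^d\setminus B(0,\rho_1)$: indeed $1_D=0$ there, and $u_{\alpha,0}=0$ there by Lemma \ref{lem:convsupport}. Hence by \eqref{eq:isoperstabil}, $\|v_{\alpha,w}\|_{L^d(\R^d\setminus B(0,\rho_1))}=\|v_{\alpha,w}-v_{\alpha,0}\|_{L^d(\R^d\setminus B(0,\rho_1))}\ls C_0<\Theta_d$. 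Since $v_{\alpha,w}\in\partial\TV(u_{\alpha,w})$ by Proposition \ref{prop:dualROFpsi}, the same slicing as for Proposition \ref{prop:slicedROFpsi} shows that for a.e.\ $s>0$ the set $E^s:=\{u_{\alpha,w}>s\}$ has finite measure and minimizes $E\mapsto\per(E)-\int_E v_{\alpha,w}$ among sets of finite measure, and symmetrically for a.e.\ $s<0$ the set $\{u_{\alpha,w}<s\}$ minimizes $E\mapsto\per(E)+\int_E v_{\alpha,w}$.

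Fix such an $s>0$, write $A_\rho:=E^s\setminus B(0,\rho)$ and $m(\rho):=|A_\rho|<+\infty$; then $m$ is non-increasing and absolutely continuous with $-m'(\rho)=\mathcal{H}^{d-1}\big((E^s)^{(1)}\cap\partial B(0,\rho)\big)$ for a.e.\ $\rho$. For a.e.\ $\rho$ the standard slicing identities for sets of finite perimeter give $\per(E^s)-\per(E^s\cap B(0,\rho))=\per(A_\rho)+2m'(\rho)$, while testing the minimality of $E^s$ against the competitor $E^s\cap B(0,\rho)$ gives $\per(E^s)-\per(E^s\cap B(0,\rho))\ls\int_{A_\rho}v_{\alpha,w}$. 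For $\rho\gs\rho_1$, Hölder's inequality and the $L^d$ bound above yield $\int_{A_\rho}v_{\alpha,w}\ls C_0\,m(\rho)^{(d-1)/d}$, while \eqref{eq:isoper} gives $\per(A_\rho)\gs\Theta_d\,m(\rho)^{(d-1)/d}$ since $|\R^d\setminus A_\rho|=+\infty$. Combining these,
\[-2m'(\rho)\;\gs\;(\Theta_d-C_0)\,m(\rho)^{(d-1)/d}\qquad\text{for a.e.\ }\rho\gs\rho_1\text{ with }m(\rho)>0.\]
Evaluating the same chain of inequalities at one radius $\rho_*\in(\rho_1,2\rho_1)$ for which the slicing identities hold, and crudely bounding $-2m'(\rho_*)\ls2\mathcal{H}^{d-1}(\partial B(0,\rho_*))$, produces an a priori bound $m(2\rho_1)\ls m(\rho_*)\ls M_0$ with $M_0$ depending only on $d$, $C_0$ and $\rho_1$ (hence not on $\alpha$, $w$, $s$).

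Setting $\phi:=m^{1/d}$, the displayed inequality becomes $\phi'\ls-(\Theta_d-C_0)/(2d)$ a.e.\ on $\{m>0\}$; since $\phi$ is continuous, non-increasing, and absolutely continuous on any interval where $m$ stays positive, an elementary comparison considering $R^*:=\inf\{\rho\gs2\rho_1:m(\rho)=0\}$ forces $R^*<+\infty$ and $m(\rho)=0$ for all $\rho\gs R_+:=2\rho_1+\tfrac{2d\,M_0^{1/d}}{\Theta_d-C_0}$, that is $|E^s\setminus B(0,R_+)|=0$ with $R_+$ uniform. The symmetric argument applied to $\{u_{\alpha,w}<s\}$ with curvature $-v_{\alpha,w}$, which is also $0$ off $B(0,\rho_1)$ with $L^d$-norm there $\ls C_0$, gives a uniform $R_-$ with $|\{u_{\alpha,w}<s\}\setminus B(0,R_-)|=0$ for a.e.\ $s<0$. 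Writing $\{u_{\alpha,w}\neq 0\}$ as the union of $\{u_{\alpha,w}>s_n\}$ and $\{u_{\alpha,w}<-s_n\}$ over a sequence $s_n\downarrow0$ of admissible levels then gives $u_{\alpha,w}=0$ a.e.\ on $\R^d\setminus B(0,R)$ with $R:=\max(R_+,R_-)$, so that $\supp u_{\alpha,w}\subset B(0,R)$ (enlarging $R$ slightly if an open ball is insisted upon). I expect the main obstacle to be the middle paragraph: assembling the BV slicing identities and the isoperimetric estimate into a clean autonomous differential inequality, and in particular extracting the a priori bound $M_0$ at a good radius $\rho_*>\rho_1$ rather than at $\rho_1$ itself, where the perimeter of $E^s$ might charge the sphere $\partial B(0,\rho_1)$; once this is in place the ODE endgame only uses that $m$ is absolutely continuous and non-increasing.
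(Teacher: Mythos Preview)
Your argument is correct and takes a genuinely different route from the paper. The paper first derives a uniform perimeter (hence measure) bound for $E^s_{\alpha,w}$ using $\per(E)=\sign(s)\int_E v_{\alpha,w}$ together with Proposition \ref{prop:curvgodown} ($|v_{\alpha,0}|\ls|\kappa_D|$) and $\kappa_D\in L^1$; it then invokes the density estimates of Proposition \ref{prop:densityest} (themselves relying on the equi-integrability bound \eqref{eq:curvbound}, which again rests on Proposition \ref{prop:curvgodown} and Lemma \ref{lem:ballinc}) and turns the measure bound into a diameter bound by a covering argument. Your proof bypasses all of this curvature machinery: you only use Lemma \ref{lem:convsupport} to get $v_{\alpha,0}=0$ outside $\conv D$, and then run a clean differential inequality for $m(\rho)=|E^s\setminus B(0,\rho)|$, extracting the initial bound $M_0$ directly from $-m'(\rho_*)\ls\mathcal{H}^{d-1}(\partial B(0,\rho_*))$ at a good radius. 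This is more elementary and self-contained, and in particular does not require Proposition \ref{prop:curvgodown}. The paper's approach, on the other hand, reuses machinery (the density estimates of Appendix \ref{sec:densityest}) that is needed anyway for the proof of Theorem \ref{thm:localhausdorff}, so in the context of the article it comes essentially for free. One small remark: your use of $v_{\alpha,w}$ as a variational curvature for every level set $E^s$ (rather than the level-dependent curvature $-\tfrac1\alpha\psi'(s-f-w)$ from Proposition \ref{prop:slicedROFpsi}) is exactly what the Remark preceding Proposition \ref{prop:cupcapofmin} records, so it is justified.
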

\begin{proof}
Denote $E:=E^s_{\alpha, w} = \{u_{\alpha,w} >s\}$ for $s>0$ and $\{u_{\alpha,w} <s\}$ if $s<0$ where $\alpha, w$ are fixed. Since $\per(E)=\sign(s)\int_E v_{\alpha, w}$ by Proposition \ref{prop:slicedROFpsi}, using the H{\"o}lder inequality, \eqref{eq:isoperstabil}, the isoperimetric inequality \eqref{eq:isoper} and Proposition \ref{prop:curvgodown} -- recalling that $v_{\alpha,0} = \kappa_\alpha$, we get
\begin{equation}\label{eq:perbound}\begin{aligned}
\per(E) &\ls \left \vert \int_E (v_{\alpha,w} - v_{\alpha,0}) \right \vert + \left \vert \int_E v_{\alpha,0} \right \vert \\ 
&\ls C_0|E|^{(d-1)/d} + \int_E \vert v_{\alpha,0} \vert \ls \Theta_d^{-1} C_0 \per(E) + \|\kappa_D\|_{L^1},
\end{aligned}\end{equation}
which provides a uniform bound for $\per(E)$.

To prove \eqref{eq:compsupp} we derive a uniform bound for $\mathrm{diam}(E)$, which does not follow from the perimeter bound alone. However, we have that by \eqref{eq:curvbound} the hypotheses of Proposition \ref{prop:densityest} are satisfied with $K=\{\dist(\cdot, \partial D)\gs 1\}$ and \[r_{K,\eps} = \frac{\eps^{1/d}}{d|B(0,1)|^{1/d}},\]
so the $E$ satisfy uniform density estimates at some scale $r_K$ and with constant $C_K$ outside the bounded set $K$, which are then combined with the fact that \eqref{eq:perbound} and the isoperimetric inequality imply a uniform upper bound for $|E|$. We can then prove the diameter bound by considering points $x_n \in \partial E \setminus K$, $n=1,\ldots,N$ with $|x_i - x_j| > r_0$ for $i \neq j$ and some $r_0 \in (0, r_K)$. The inner density estimate $|E \cap B(x_n, r_{0})| > C|B(x_n, r_0)|$ combined with the uniform bound for $|E|$ gives a uniform upper bound for $N$, hence also for $\mathrm{diam}(E)$.
\end{proof}

We are now ready to prove the main result of this section.

\subsection{Proof of Theorem \ref{thm:localhausdorff}}

\begin{proof}[Proof of Theorem \ref{thm:localhausdorff}]First, we notice that the definition of the Hausdorff distance reads
\[\dh(\partial E^s_{\alpha,w}, \partial D) = \max \left( \sup_{x \in \partial E^s_{\alpha,w}} \dist(x, \partial D), \sup_{x \in \partial D} \dist(x, \partial E^s_{\alpha,w})\right).\] 
Therefore, we need to prove the two statements
\begin{align}\label{eq:liminfhaus}\sup_{x \in \partial D} \dist(x, \partial E^s_{\alpha_n,w_n}) &\to 0, \text{ and }\\
\label{eq:limsuphaus}\sup_{x \in \partial E^s_{\alpha_n,w_n}} \dist(x, \partial D) &\to 0.\end{align}

Let us start with \eqref{eq:liminfhaus}, for which the argument follows closely that of \cite[Prop.~9]{ChaDuvPeyPoo17}, which we reproduce for completeness. 
Since the parameter choice \eqref{eq:hardparamchoice} implies in particular condition \eqref{eq:easyparamchoice}, arguing as in the proof of Proposition \ref{prop:existconv} we have, up to a subsequence, that $Du_{\alpha_n, w_n} \wksto D1_D$.
Now the coarea formula, as formulated for example in \cite[Thm.~10.3.3]{AttButMic14}, tells us that we can slice these measures (and not just their total variations) so that

\[Du_{\alpha_n, w_n}(A) = \int_{-\infty}^{+\infty} D 1_{\{u_{\alpha_n, w_n} > s\}}(A) \dd s = \int_{-\infty}^{+\infty} D 1_{E^s_{\alpha_n, w_n}}(A) \dd s \text{ for Borel sets }A,\]
and therefore for a.e.~$s \in (0,1)$ we in fact have
\begin{equation}\label{eq:wkstinds}D 1_{E^s_{\alpha_n, w_n}} \wksto D1_D.\end{equation}
Now let $x \in \supp D1_D$, then for any $r >0$ using \eqref{eq:wkstinds} we get
\[0 < |D1_D|(B(x,r)) \ls \liminf_n |D 1_{E^s_{\alpha_n, w_n}}|(B(x,r)),\]
which implies that $\limsup_n \dist(x, \supp D1_{E_{\alpha_n, w_n}}) \ls r$. Since $r>0$ was arbitrary we conclude $\dist(x, \supp D1_{E_{\alpha_n, w_n}}) \to 0$, and in particular
\[\sup_{x \in \supp D1_D} \dist(x, \supp D1_{E_{\alpha_n, w_n}}) \to 0.\]
Finally, as mentioned in Section \ref{subsec:preliminaries}, we always use representatives for $E_{\alpha_n, w_n}$ and $D$ for which
\[\partial E_{\alpha_n, w_n} = \supp D 1_{E_{\alpha_n, w_n}} \text{ and }\partial D = \supp D 1_D,\]
completing the proof of \eqref{eq:liminfhaus}.

To prove \eqref{eq:limsuphaus} we assume it does not hold to reach a contradiction. One the one hand, we notice that using the parameter choice \eqref{eq:hardparamchoice} and Proposition \ref{prop:dualstability},  we can apply Proposition \ref{prop:compsupp} to see that the $u_{\alpha_n,w_n}$ have a common compact support. This, combined with the convergence in $L^1_\loc$ of $u_{\alpha_n,w_n}$ proved in Proposition \ref{prop:existconv}, implies that $|E^s_{\alpha_n,w_n} \Delta D|\to 0$. On the other hand, if \eqref{eq:limsuphaus} fails, from Proposition \ref{prop:hausineq} and its proof we see that we must have either 
\begin{equation}\label{eq:insideoutside}\sup_{x \in E^s_{\alpha_n,w_n}} \dist(x, D) \not\to 0, \text{ or }\sup_{x \in \R^d \setminus E^s_{\alpha_n,w_n}} \dist\left(x, \R^d \setminus D\right)\not\to 0.\end{equation}
Assume the first is true, so that there is $\delta > 0$ and a subsequence $x_n \in E^s_{\alpha_n,w_n}$ for which $\dist(x_n, D) > \delta$. In that case, by the inner density estimates proved in Proposition \ref{prop:densityest} of Appendix \ref{sec:densityest}, we have that 
\begin{equation*}|E^s_{\alpha_n,w_n} \Delta D| \gs |E^s_{\alpha_n,w_n} \setminus D| \gs |B(x_n, \delta) \cap E^s_{\alpha_n,w_n}| \gs C_\delta |B(0,\delta)|,\end{equation*}
a contradiction. If the second case of \eqref{eq:insideoutside} was true, we instead use the outer density estimate to again contradict the $L^1$ convergence.
\end{proof}

\section{Convergence for generic {BV} functions without noise}\label{sec:noiseless}

This section is aimed at the proof of Theorem \ref{thm:generichausdorff} and simplified versions of it for piecewise constant functions. We are concerned with the noiseless situation, that is, we assume $w=0$ throughout. Moreover, we always assume $\supp f \subset B(0,1)$ which, arguing as in Lemma \ref{lem:convsupport}, implies
\begin{equation}\label{eq:boundedsupp}\supp u_{\alpha, 0} \subset \conv\left( \supp f \right) \subset \overline{B(0,1)}.\end{equation}

It might seem that also in this case the convergence could be obtained by following the same arguments used in Section \ref{sec:local}. This would require reproducing the inequality $|v_{\alpha,0}| \ls |\kappa_D|$ of Proposition \ref{prop:curvgodown}. However, it is not clear how to construct a function that acts as a variational curvature for all the level sets of $f$ simultaneously and also plays the role of $\kappa_D$ in the previous inequality, even for piecewise constant $f$. To gather some intuition, one can notice that when the data is a characteristic function $f=1_D$, the boundary of the noiseless level set $E_\alpha^s$ can be decomposed into two parts: a part of the boundary of $D$, and a free part (which is actually $C^{1,\gamma}$ for $\gamma\in (0,1/2)$ \cite[Thm.~21.8]{Mag12}) with mean curvature $(s-1)/\alpha$ or $-s/\alpha$ for points inside or outside of $D$ respectively. In turn, assuming $g=1$ at the points under consideration, the same is true for the sets $D^{\pm\lambda}$ defined with curvature $\pm \lambda$ and constraints to lie inside or outside of $D$, and used through \eqref{eq:kappaplus} and \eqref{eq:kappaminus} in the definition of $\kappa_D$. This allows to easily compare them to the mentioned level sets through their respective variational problems. Since at present we do not know which would be the variational problem defining a hypothetical analog of $D^{\pm\lambda}$, in this section we work with the same construction of curvatures of Section \ref{sec:curvatures}, but applied to each level set of $f$ separately.

\subsection{Approximation with the level sets of the optimal curvature of D}\label{sec:convind}
The key to the results of this section will be to know that we can approximate any $D \subset B(0,1)$ with the sets $D^\lambda$ and $D^{-\lambda}$ as $\lambda \to \infty$ arising from the choices of Definition \ref{def:dlambda}. First we note that this approximation happens in mass:
\begin{lemma}\label{lem:approxsetsmass}
For $D \subset \R^d$ bounded of finite perimeter, we have as $\lambda \to +\infty$ that
\[\left|D \setminus D^\lambda \right| \to 0, \text{ and }\left|D^{-\lambda} \setminus D \right| \to 0.\]
\end{lemma}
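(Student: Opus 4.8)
The plan is to prove both convergences separately using the variational characterizations of $D^\lambda$ and $D^{-\lambda}$, exploiting that $D$ itself (resp.\ $\R^d \setminus D$) is a competitor in \eqref{eq:insideprob} (resp.\ \eqref{eq:outsideprob}) with density $g_2$. For the first statement, recall $D^\lambda$ is the maximal minimizer of $E \mapsto \per(E) - \lambda|E|$ among $E \subset D$ (since $g_2 \equiv 1$ on $B(0,1) \supset D$). Testing against $E = D$ gives
\[
\per(D^\lambda) - \lambda |D^\lambda| \ls \per(D) - \lambda |D|,
\]
so that $\lambda\,|D \setminus D^\lambda| = \lambda\big(|D| - |D^\lambda|\big) \ls \per(D) - \per(D^\lambda) \ls \per(D)$, using $\per(D^\lambda) \gs 0$. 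Hence $|D \setminus D^\lambda| \ls \per(D)/\lambda \to 0$ as $\lambda \to +\infty$. (This is exactly the argument already used in the proof of Proposition \ref{prop:kappawelldef} with $c_{\overline D} = 1$.)

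**Outside.** For the second statement, recall $D^{-\lambda} = \R^d \setminus F^\lambda$ where $F^\lambda$ is the maximal minimizer of \eqref{eq:outsideprob} with density $g_2$, equivalently $D^{-\lambda}$ is the minimal minimizer of $E \mapsto \per(E) + \lambda \int_E g_2$ among $E \supset D$. Testing this against $E = D$ yields
\[
\per(D^{-\lambda}) + \lambda \int_{D^{-\lambda}} g_2 \ls \per(D) + \lambda \int_D g_2,
\]
so $\lambda \int_{D^{-\lambda} \setminus D} g_2 \ls \per(D) - \per(D^{-\lambda}) \ls \per(D)$. For $\lambda > 2d$, Lemma \ref{lem:boundedcomplement} tells us $D^{-\lambda} \subset \overline{B(0,1)} \subset B(0,2)$ where $g_2 \equiv 1$, hence $\int_{D^{-\lambda}\setminus D} g_2 = |D^{-\lambda} \setminus D|$, giving $|D^{-\lambda} \setminus D| \ls \per(D)/\lambda \to 0$.

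**Main obstacle.** The only subtlety is the outside case: one must ensure that $g_2$ is bounded below on the region $D^{-\lambda} \setminus D$ so that $\int_{D^{-\lambda}\setminus D} g_2$ controls $|D^{-\lambda}\setminus D|$. A priori $g_2$ may be as small as $g_f$ near infinity, so this is where Lemma \ref{lem:boundedcomplement} is essential: it confines $D^{-\lambda}$ to $\overline{B(0,1)}$ once $\lambda > 2d$, placing it in the region where $g_2 \equiv 1$. With that in hand both estimates are immediate and, in fact, quantitative (rate $O(1/\lambda)$); no compactness or finer regularity is needed.
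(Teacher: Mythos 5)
Your proof is correct and coincides with the paper's argument, which is delegated to the second part of the proof of Proposition \ref{prop:kappawelldef}: test $D$ (resp.\ $\R^d\setminus D$) in \eqref{eq:insideprob} (resp.\ \eqref{eq:outsideprob}) to bound $\lambda$ times the symmetric difference by $\per(D)$, and invoke Lemma \ref{lem:boundedcomplement} in the outer case to confine $D^{-\lambda}$ to the region where the density is bounded below. You simply make this explicit for the concrete density $g_2$ of Definition \ref{def:dlambda}, where $g_2\equiv 1$ on $B(0,2)$ and hence $\lambda_1=2d$, which also yields the quantitative $O(1/\lambda)$ rate.
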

\begin{proof} It is contained in the proof of Proposition \ref{prop:kappawelldef}. 
For the inside approximants $D^\lambda$, the result is proven also in \cite[Thm.~2.3(ii)]{TamGia89}. 
\end{proof}
Moreover, this two-sided approximation also holds in Hausdorff distance of the corresponding boundaries:
\begin{lemma}\label{lem:approxsets}
For $D \subset \R^d$ bounded of finite perimeter and every $\eps >0$ there exists $\lambda_\eps >0$ such that $D^{\lambda_\eps} \subset D \subset D^{-\lambda_\eps}$, $\dh(\partial D^{\lambda_\eps} ,\partial D) \ls \eps$ and $\dh(\partial D^{-\lambda_\eps} ,\partial D) \ls \eps$.
\end{lemma}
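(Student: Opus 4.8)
\textbf{Proof plan for Lemma \ref{lem:approxsets}.}

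The plan is to upgrade the mass convergence of Lemma \ref{lem:approxsetsmass} to Hausdorff convergence of the boundaries using the density-estimate machinery of Section \ref{sec:hausdorff}, specifically Theorem \ref{thm:bdyhausdorff}. For this I need two ingredients: first, that the inclusions $D^\lambda \subset D \subset D^{-\lambda}$ hold (the monotone structure), and second, that the families $\{D^\lambda\}_\lambda$ (for $\lambda$ large) and $\{D^{-\lambda}\}_\lambda$ satisfy uniform density estimates at some common scale and constant. The inclusions are immediate from the definitions: $D^\lambda$ minimizes \eqref{eq:insideprob} under the constraint $E \subset D$, and $D^{-\lambda} = \R^d \setminus F^\lambda$ with $F^\lambda \subset \R^d \setminus D$, so $D \subset D^{-\lambda}$. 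The mass convergences $|D \setminus D^\lambda| \to 0$ and $|D^{-\lambda} \setminus D| \to 0$ are exactly Lemma \ref{lem:approxsetsmass}, which combined with the inclusions gives $|D^\lambda \Delta D| \to 0$ and $|D^{-\lambda} \Delta D| \to 0$.

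The heart of the argument is the uniform density estimates, which follow from the variational characterization and comparison with balls via Lemma \ref{lem:basiccomp} and Example \ref{ex:curvball}. Fix $\lambda$ large and let $x \in \partial D^\lambda$; since $D^\lambda$ minimizes \eqref{eq:insideprob} (equivalently, via the obstacle-lifting as in the proof of Lemma \ref{lem:inclusions}, it minimizes $E \mapsto \per(E) - \int_E \kappa_i^\lambda$ with $\kappa_i^\lambda$ bounded above by $\lambda$ on $D$ and equal to $\kappa_D < 0$ outside), a standard comparison argument — cutting $D^\lambda$ with a small ball $B(x,r)$ and using the isoperimetric inequality \eqref{eq:isoper} on the excised piece against the perimeter gained on $\partial B(x,r)$ — yields an inner density estimate $|D^\lambda \cap B(x,r)|/|B(x,r)| \gs C$ for $r \ls r_0$, where $C$ and $r_0$ depend only on $d$ and on the fixed bound $\lambda \ls$ (whatever large value we need), \emph{not} on $\lambda$ once $\lambda$ exceeds a threshold — more precisely I want the estimate uniform for all $\lambda$ large, which works because the ``bad'' term in the comparison scales like $\lambda r^d$ against the good term $r^{d-1}$, so choosing $r_0 \sim 1/\lambda$ is the naive bound; to get a \emph{uniform} scale one instead argues that $D^\lambda$ increases to $D$ and uses density estimates for the limiting obstacle problem, or more simply notes that away from $\partial D$ the competitor balls are genuinely interior. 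The cleaner route: for the outer density of $D^\lambda$ at $x \in \partial D^\lambda$, either $x \in \partial D$ (and one uses that $\R^d \setminus D^\lambda \supset \R^d \setminus D$ locally, reducing to outer density of $D$) or $x$ is on the free boundary where $D^\lambda$ has an exterior ball of radius $1/\lambda$ by the obstacle-problem regularity, giving a clean outer density bound. Symmetric reasoning with $F^\lambda$ and \eqref{eq:outsideprob} handles $D^{-\lambda}$.

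The main obstacle I anticipate is exactly this uniformity of the density constants and scale in $\lambda$: the natural comparison gives a scale $r_0$ that shrinks like $1/\lambda$, which is useless since we take $\lambda \to \infty$. Resolving it requires recognizing that only one side of each density estimate is delicate — the side ``towards the free boundary'' — and there the obstacle-problem structure (or the rolling-ball description of Remark \ref{rem:offsets} in the planar convex case) supplies balls of radius $\sim 1/\lambda$ from the correct side, which is precisely the \emph{good} direction: a small exterior ball still gives an outer density bound with universal constant $C = 1/2^d$ at the fixed scale of that ball, and since we only need \emph{some} scale that works for each individual $\lambda_\eps$ (not a single scale for all of them simultaneously), the dependence of $r_0$ on $\lambda$ is in fact harmless — Theorem \ref{thm:bdyhausdorff} is applied to the two-element family $\{D^{\lambda_\eps}, D\}$ for each $\eps$, or rather one runs the contradiction argument of Proposition \ref{prop:hausdorff_of_sets} directly. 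Once this is correctly set up, assembling $D^{\lambda_\eps} \subset D \subset D^{-\lambda_\eps}$ with $\dh(\partial D^{\lambda_\eps},\partial D) \to 0$ and $\dh(\partial D^{-\lambda_\eps},\partial D) \to 0$ and extracting, for given $\eps$, a single $\lambda_\eps$ making both distances $\ls \eps$ is routine.
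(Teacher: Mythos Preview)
Your approach via Theorem \ref{thm:bdyhausdorff} has a genuine gap: that theorem (and likewise Proposition \ref{prop:hausdorff_of_sets}) requires density estimates not only for the approximants $D^{\pm\lambda}$ but also for the limit set $D$. Here $D$ is an \emph{arbitrary} bounded set of finite perimeter, with no density assumptions whatsoever, so there is nothing to supply the outer (or inner) density estimate for $D$ that your argument ``reduces to'' at coincidence points $x \in \partial D^\lambda \cap \partial D$. The obstacle-problem regularity you invoke helps only on the free part of $\partial D^\lambda$, not on the coincidence set, and applying Theorem \ref{thm:bdyhausdorff} to the two-element family $\{D^{\lambda_\eps}, D\}$ does not escape this, since $D$ is one of the two sets and must itself satisfy the estimate. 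Your own observation that the natural scale degenerates like $1/\lambda$ is a symptom of the same issue.

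The paper's proof is considerably more elementary and avoids density estimates entirely, by handling the two halves of $\dh(\partial D^\lambda, \partial D)$ directly. For $\sup_{x \in \partial D^\lambda} \dist(x,\partial D)$: if $y \in D$ with $\dist(y,\partial D) > \eps$, then $B(y,\eps) \subset D$, and comparison with balls (Example \ref{ex:curvball}, as in Lemma \ref{lem:ballinc}) forces $B(y,\eps) \subset D^\lambda$ for all $\lambda > d/\eps$, so $y \notin \partial D^\lambda$; hence this half is $\ls \eps$. For $\sup_{x \in \partial D} \dist(x,\partial D^\lambda)$: since $\partial D$ is compact, cover it by finitely many balls $B(x_j,\eps)$ with $x_j \in \partial D$; by the mass convergence of Lemma \ref{lem:approxsetsmass} one can choose $\lambda_\eps$ so large that $|D^{\lambda_\eps} \cap B(x_j,\eps)| > 0$ for every $j$, and since each such ball also meets $\R^d \setminus D \subset \R^d \setminus D^{\lambda_\eps}$, it must contain a point of $\partial D^{\lambda_\eps}$. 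The exterior approximants $D^{-\lambda}$ are treated symmetrically. In short, the ball-swallowing property of $D^{\pm\lambda}$ together with compactness of $\partial D$ and mass convergence does the whole job, without any regularity assumption on $D$.
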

\begin{proof}
The interior approximation is proved in \cite[Thm.~2.3(iv)]{TamGia89}; we reproduce their argument here, and see that it can also be applied for the exterior approximation with $D^{-\lambda}$.

To start, let $x \in D$ with $\dist(x, \partial D) > \eps$. Then we have that $B(x, \eps)\subset D$, and as in the proof of Lemma \ref{lem:ballinc} we must have $B(x, \eps) \subset D^\lambda$ for all $\lambda > \eps/d$, in particular $x \in D^\lambda \setminus \partial D^\lambda$, implying
\[\sup_{x \in \partial D^\lambda} \dist(x, \partial D) \ls \eps \text{ for all }\lambda > \eps/d.\]
For the other term of the Hausdorff distance, the strategy is to cover $\partial D$ with finitely many balls $B(x_j, \eps)$ with $j=1, \ldots, N_\eps$ and $x_j \in \partial D$, which is possible since $\partial D$ is bounded. Then, since Lemma \ref{lem:approxsetsmass} implies that $|D \setminus D^\lambda| \to 0$ as $\lambda \to \infty$, we can choose $\lambda_{\eps}$ such that $|D^{\lambda_\eps} \cap B(x_j, \eps)| >0$ for all $j$. Since these balls cover $\partial D$, we have that
\[\sup_{x \in \partial D} \dist(x, \partial D^\lambda) \ls \eps \text{ for all }\lambda > \lambda_\eps.\]
Since it was only used that $\partial D = \partial ( \R^d \setminus D )$ is bounded, we can proceed in the same way for the approximation with $\partial D^{-\lambda}$. For the first part, it suffices to notice that by definition $F^\lambda=\R^d \setminus D^{-\lambda}$ are minimizers of \eqref{eq:outsideprob}, so if $x \in \R^d \setminus D$ with $\dist(x , \partial D) > \eps$ we must also have $B(x, \eps) \subset F^\lambda = \R^d \setminus D^{-\lambda}$ for all $\lambda > \eps/d$. Moreover, we have $|D^{-\lambda} \setminus D|\to 0$ by Lemma \ref{lem:approxsetsmass}, which allows to repeat the covering argument.
\end{proof}

\begin{corollary}\label{cor:approxcomplements}
For every bounded finite perimeter set $D \subset \R^d$ and every $\eps >0$ there exists $\lambda_\eps >0$ such that
\[ \dh(D^{ \lambda_\eps} ,D) \ls \eps\text{ and }\dh(D^{-\lambda_\eps},D) \ls \eps,\]
and also 
\[ \dh(\R^d \setminus D^{\lambda_\eps} ,\R^d \setminus D) \ls \eps\text{ and }\dh(\R^d \setminus D^{-\lambda_\eps} ,\R^d \setminus D) \ls \eps.\]
\end{corollary}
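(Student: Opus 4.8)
The plan is to deduce Corollary~\ref{cor:approxcomplements} from Lemma~\ref{lem:approxsets} together with the general inequality of Proposition~\ref{prop:hausineq}, noting that the inclusions $D^{\lambda_\eps} \subset D \subset D^{-\lambda_\eps}$ make the one-sided distances trivial to control. First I would fix $\eps > 0$ and invoke Lemma~\ref{lem:approxsets} to obtain $\lambda_\eps$ with $D^{\lambda_\eps} \subset D \subset D^{-\lambda_\eps}$ and with both $\dh(\partial D^{\lambda_\eps}, \partial D) \ls \eps$ and $\dh(\partial D^{-\lambda_\eps}, \partial D) \ls \eps$. From the inclusion $D^{\lambda_\eps} \subset D$ one has $\sup_{x \in D^{\lambda_\eps}} \dist(x, D) = 0$, so $\dh(D^{\lambda_\eps}, D) = \sup_{y \in D} \dist(y, D^{\lambda_\eps})$, and similarly $\dh(D^{-\lambda_\eps}, D) = \sup_{x \in D^{-\lambda_\eps}} \dist(x, D)$ because $D \subset D^{-\lambda_\eps}$. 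The same reasoning applied to complements — using $\R^d \setminus D^{-\lambda_\eps} \subset \R^d \setminus D \subset \R^d \setminus D^{\lambda_\eps}$ — reduces all four quantities to a single one-sided supremum each.

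Next I would bound each of these one-sided suprema by the corresponding boundary Hausdorff distance. For instance, to estimate $\sup_{y \in D} \dist(y, D^{\lambda_\eps})$: if $y \in D \setminus D^{\lambda_\eps}$ (the other case being trivial), then the segment from $y$ to any point of $D^{\lambda_\eps}$ must cross $\partial D^{\lambda_\eps}$, so $\dist(y, D^{\lambda_\eps}) \ls \dist(y, \partial D^{\lambda_\eps})$; moreover since $y \in \overline{D}$, if $y \notin \partial D$ then $y$ is interior so a short segment stays in $D$ and we can route through $\partial D^{\lambda_\eps}$, while if $y \in \partial D$ then $\dist(y, \partial D^{\lambda_\eps}) \ls \dh(\partial D, \partial D^{\lambda_\eps}) \ls \eps$ directly. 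More cleanly, one observes $\dist(y, D^{\lambda_\eps}) \ls \dist(y, \partial D^{\lambda_\eps})$ always holds when $y \notin D^{\lambda_\eps}$ since $\partial D^{\lambda_\eps} \subset \overline{D^{\lambda_\eps}}$, and $\dist(y,\partial D^{\lambda_\eps}) \ls \dist(y, \partial D) + \dh(\partial D, \partial D^{\lambda_\eps}) \ls 0 + \eps$ after first replacing $y \in D$ by a nearby boundary point or noting $\dist(y, \partial D) = 0$ on a dense subset. The analogous computations handle $D^{-\lambda_\eps}$ and the two complement statements verbatim, swapping the roles of the sets.

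Actually the cleanest route is to apply Proposition~\ref{prop:hausineq} twice. From $\dh(\partial D^{\lambda_\eps}, \partial D) \ls \eps$ alone one cannot recover $\dh(D^{\lambda_\eps}, D)$ in general — that is precisely the content of Example~\ref{ex:set_not_bdy} — so the inclusions are essential. With the inclusions in hand, I would argue: $\dh(D^{\lambda_\eps}, D) = \sup_{y\in D}\dist(y, D^{\lambda_\eps})$, and for $y \in D$ pick $y' \in \partial D$ with $|y - y'|$ as small as we like (possible since $\overline{D}$ has $\partial D$ as boundary, or simply because $\dist(y,\partial D)=0$ for $y$ outside the interior and we can also just take $y' = y$ when $y \in \partial D$); then there is $z \in \partial D^{\lambda_\eps}$ with $|y' - z| \ls \eps$, and since $D^{\lambda_\eps} \subset D$ and $z \in \overline{D^{\lambda_\eps}}$ this gives $\dist(y, D^{\lambda_\eps}) \ls |y - z| \ls |y - y'| + \eps$, and letting $y' \to y$ yields $\dist(y, D^{\lambda_\eps}) \ls \eps$. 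Symmetrically $\dh(D^{-\lambda_\eps}, D) \ls \eps$ using $D \subset D^{-\lambda_\eps}$, and the complement statements follow by applying the identical argument to $\R^d \setminus D^{\pm\lambda_\eps}$ and $\R^d \setminus D$, whose boundaries coincide with $\partial D^{\pm\lambda_\eps}$ and $\partial D$ and whose inclusions are reversed.

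The main obstacle is the slight subtlety that $\dh(\partial D^{\lambda_\eps}, \partial D) \ls \eps$ controls distances from $\partial D$ to $\partial D^{\lambda_\eps}$ but we need distances from \emph{all} of $D$ (not just $\partial D$) to $D^{\lambda_\eps}$; this is resolved entirely by the inclusion $D^{\lambda_\eps} \subset D$, which forces any point of $D \setminus D^{\lambda_\eps}$ to be "shielded" from $D^{\lambda_\eps}$ by $\partial D^{\lambda_\eps}$, so that $\dist(\cdot, D^{\lambda_\eps}) \ls \dist(\cdot, \partial D^{\lambda_\eps})$ — and one then only needs $\dist(y, \partial D^{\lambda_\eps}) \ls \eps$ for $y \in D$, which follows since such $y$ is within distance $0$ of $\partial D$ (every point of $D$ is a limit of points arbitrarily close to, or on, $\partial D$ — indeed $\dist(y,\partial D)$ need not be $0$ for interior $y$, so here one should instead note $D \subset \overline{D^{-\lambda_\eps}}$-type reasoning is not needed; rather use that $D^{\lambda_\eps} \subset D$ means $D \setminus D^{\lambda_\eps}$ points have $\dist$ to $D^{\lambda_\eps}$ bounded by $\dh(\partial D^{\lambda_\eps},\partial D)$ directly via the covering/crossing argument in the proof of Lemma~\ref{lem:approxsets}). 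In short, no new ideas are required beyond combining Lemma~\ref{lem:approxsets} with elementary distance estimates exploiting the one-sided inclusions, and the write-up should be only a few lines.
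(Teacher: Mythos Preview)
Your proposal has a genuine gap. The core implication you are trying to use---that $A \subset B$ together with $\dh(\partial A, \partial B) \ls \eps$ yields $\dh(A, B) \ls \eps$---is \emph{false} in general. Take $B = B(0,1)$ and $A = B(0,1) \setminus \overline{B(0,1-\delta)}$: then $A \subset B$, $\dh(\partial A, \partial B) = \delta$, but $\dh(A, B) = \dist(0, A) = 1-\delta$. So the chain of inequalities you repeatedly attempt (route through $\partial D$, use $\dist(y,\partial D)\approx 0$, etc.) breaks precisely because interior points of $D$ can be far from $\partial D$; you notice this yourself but never resolve it. Your invocation of Proposition~\ref{prop:hausineq} does not help either, since that inequality goes in the wrong direction: it bounds $\dh(\partial E,\partial F)$ \emph{by} $\max(\dh(E,F),\dh(E^c,F^c))$, not the other way around.

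What is missing is the measure convergence. The paper's proof simply combines Lemma~\ref{lem:approxsetsmass} (giving $|D^{\pm\lambda} \Delta D| \to 0$) and Lemma~\ref{lem:approxsets} (giving $\dh(\partial D^{\pm\lambda}, \partial D) \to 0$) and feeds both into Theorem~\ref{thm:hausdorffequiv}, whose ``boundary $\Rightarrow$ sets'' direction is exactly Proposition~\ref{prop:bdy_implies_set} applied to $E_n$ and to $E_n^c$. That proposition is where the $L^1$ convergence enters: it rules out the annulus-type obstruction by forcing any ball disjoint from the other set to have small measure. Your final parenthetical about the ``covering/crossing argument in the proof of Lemma~\ref{lem:approxsets}'' is groping towards this, since that covering argument itself relies on $|D \setminus D^\lambda| \to 0$; but you never invoke Lemma~\ref{lem:approxsetsmass} or Proposition~\ref{prop:bdy_implies_set} explicitly, and the elementary distance estimates you sketch do not close the gap on their own.
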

\begin{proof}
It follows by Lemmas \ref{lem:approxsetsmass} and \ref{lem:approxsets} combined with Theorem \ref{thm:hausdorffequiv}. Note that the latter theorem is not quantitative and we could get different values of $\lambda_\eps$ from it for the different convergences, but we can then just use the largest of the two.
\end{proof}

\begin{example}
A result like Lemma \ref{lem:approxsets} can only hold for bounded sets. As a counterexample, consider $D$ defined by
\[D:= \bigcup_{j=0}^\infty B\left( (j, 0), \frac{1}{2^{j+1}}\right).\]
Clearly we have $|D| < \infty$ and $\per(D) < \infty$, but $D^\lambda$ must be a union of finitely many balls, so $\dh(\partial D, \partial D^\lambda) = \infty$ for all $\lambda > 0$.
\end{example}

To obtain Hausdorff convergence in the noiseless case, we do not need to use density estimates; for indicatrices it is enough to combine Lemmas \ref{lem:inclusions} and \ref{lem:approxsets}:
\begin{proposition}
Let $f=1_D$ with $D \subset B(0,1)$, $w=0$, and denote by $u_{\alpha, 0}$ the coresponding minimizers of \eqref{eq:ROFpsi}. Then for almost every $s \in (0,1)$, the boundary $\partial E_\alpha^s$ of the level set $E_\alpha^s=\{u_{\alpha, 0} > s\}$ converges in Hausdorff distance to $\partial E_0^s = \partial \{f > s\}$ as $\alpha \to 0$.
 \label{prop:noiselesshaus}
\end{proposition}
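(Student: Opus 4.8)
The plan is to sandwich the level set $E_\alpha^s$ between the inner and outer approximants $D^\lambda$ and $D^{-\lambda}$ of Definition \ref{def:dlambda} and then let $\lambda\to\infty$ using their Hausdorff convergence to $D$. Since $f=1_D$, for every $s\in(0,1)$ we have $E_0^s=\{f>s\}=D$, so the claim to be proved is $\dh(\partial E_\alpha^s,\partial D)\to 0$ as $\alpha\to 0$.

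First I would fix $s\in(0,1)$ and $\eps>0$, and invoke Corollary \ref{cor:approxcomplements} to produce $\lambda_\eps>0$ with
\[\dh(D^{\lambda_\eps},D)\ls\eps,\qquad \dh(D^{-\lambda_\eps},D)\ls\eps,\]
\[\dh(\R^d\setminus D^{\lambda_\eps},\R^d\setminus D)\ls\eps,\qquad \dh(\R^d\setminus D^{-\lambda_\eps},\R^d\setminus D)\ls\eps.\]
Next, by Lemma \ref{lem:inclusions} applied with this fixed $\lambda_\eps$, as soon as $\alpha$ is small enough that $\lambda_\eps<\psi'(1-s)/\alpha$ and $\lambda_\eps<\psi'(s)/\alpha$ — that is, $\alpha<\lambda_\eps^{-1}\min\big(\psi'(1-s),\psi'(s)\big)$, a strictly positive threshold because $\psi'$ is odd, strictly increasing and $\psi'(0)=0$ — one has the inclusions $D^{\lambda_\eps}\subset E_\alpha^s\subset D^{-\lambda_\eps}$. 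These hold only up to Lebesgue-null sets, which is immaterial in what follows: we use throughout the representatives of Subsection \ref{subsec:preliminaries}, for which topological boundaries coincide with supports of derivatives and are therefore unaffected by null modifications.

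Finally I would turn the inclusions into Hausdorff bounds. Since $E_\alpha^s\subset D^{-\lambda_\eps}$ we get $\sup_{x\in E_\alpha^s}\dist(x,D)\ls\dh(D^{-\lambda_\eps},D)\ls\eps$, and since $D^{\lambda_\eps}\subset E_\alpha^s$ we get $\sup_{y\in D}\dist(y,E_\alpha^s)\ls\dh(D^{\lambda_\eps},D)\ls\eps$, hence $\dh(E_\alpha^s,D)\ls\eps$; taking complements reverses the inclusions, so the same reasoning with the last two estimates above gives $\dh(\R^d\setminus E_\alpha^s,\R^d\setminus D)\ls\eps$. Proposition \ref{prop:hausineq} then yields $\dh(\partial E_\alpha^s,\partial D)\ls\eps$, and letting $\eps\to 0$ concludes the proof — in fact for every $s\in(0,1)$. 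The argument has essentially no obstacle beyond assembling the cited results; the one delicate point, as noted, is the coexistence of the a.e.-inclusions of Lemma \ref{lem:inclusions} with the topological boundaries appearing in the Hausdorff distance, which is handled by fixing representatives once and for all. (Alternatively, one could record that $|E_\alpha^s\,\Delta\,D|\to 0$ for a.e.\ $s$, via the $L^1$ convergence of Proposition \ref{prop:existconv} together with \eqref{eq:boundedsupp}, and apply Theorem \ref{thm:hausdorffequiv} instead of Proposition \ref{prop:hausineq}; this is the route behind the ``almost every'' in the statement, even though the direct argument above actually gives the conclusion for all $s\in(0,1)$.)
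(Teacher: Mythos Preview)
Your proposal is correct and follows precisely the route the paper indicates: the paper does not write out a proof of Proposition~\ref{prop:noiselesshaus} but says that ``it is enough to combine Lemmas~\ref{lem:inclusions} and~\ref{lem:approxsets}'', and your argument does exactly that, passing through Corollary~\ref{cor:approxcomplements} and closing with Proposition~\ref{prop:hausineq}. Your observation that the conclusion in fact holds for \emph{every} $s\in(0,1)$ (since $\psi'(s),\psi'(1-s)>0$) is correct and slightly sharper than the ``almost every'' in the statement.
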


\subsection{Denoising of a generic BV function. Proof of Theorem \ref{thm:generichausdorff}}
Let now $f$ be a generic $\BV$ function supported in $B(0,1)$ and let $u_\alpha$ the minimizer of \eqref{eq:ROFpsi} with $w=0$. One wants to reproduce the construction of Section \ref{sec:convind} for every level set of $u_\alpha$. We denote $G^s := \{f > s\}$ for $s>0$ and $G^s := \{f < s\}$ for $s<0$ the level sets of $f$ and similarly $E_\alpha^s = \{u_\alpha > s\}$ for $s>0$, $E_\alpha^s = \{u_\alpha < s\}$ for $s<0$ the ones of $u_\alpha$. The sets $E_\alpha^s$ minimize
\begin{equation} \label{eq:ROFls} E \mapsto \per(E) + \frac{\sign(s)}{\alpha} \int_E \psi'(s-f).\end{equation}
In contrast to the situation in Section \ref{sec:local}, the functions involved may take negative values, but by using lower level sets for $s<0$ we ensure that these are also contained in $B(0,1)$. With this in view, given $\eps>0$ we have by Lemma \ref{lem:approxsets} for each $s$ an approximation $(G^s)^{\lambda(\eps,s)}$ of $G^s$ from inside, which to make the notation slightly lighter we denote as 
\[G_{i,\eps}^s:=(G^s)^{\lambda(\eps,s)} \subset G^s,\]
with $\dh(\partial G_{o,\eps}^s, \partial G^s) < \eps$ and curvature $\kappa^s_{i,\eps}$ bounded above by $\lambda(\eps,s)$. Similarly we denote by 
\[G_{o,\eps}^s := (G^s)^{-\lambda(\eps,s)} \supset G^s\]
the corresponding approximation of $G^s$ from the outside with $\dh(\partial G_{o,\eps}^s, \partial G^s) < \eps$ and curvature $\kappa^s_{o,\eps}$ bounded below by $-\lambda(\eps,s)$ on $B(0,1)$.

\begin{lemma}
 \label{lem:delta}
Let $\eps, \delta >0$. Then for $\alpha$ small enough (depending on $s$, $\delta$ and $\eps$), 
\[|G_{i,\eps}^{s+\delta} \setminus E_\alpha^{s}| = 0\text{ and }|E_\alpha^{s} \setminus G_{o,\eps}^{s-\delta}|=0 \text{ for }s>0,\]
and analogously
\[|G_{i,\eps}^{s-\delta} \setminus E_\alpha^{s}| = 0\text{ and }|E_\alpha^{s} \setminus G_{o,\eps}^{s+\delta}|=0 \text{ for }s<0.\]
\end{lemma}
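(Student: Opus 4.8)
The plan is to obtain each of the four set inclusions directly from the basic comparison Lemma~\ref{lem:basiccomp}, reproducing the mechanism of the proof of Lemma~\ref{lem:inclusions}; the only genuinely new point is that the curvatures of the approximants $G_{i,\eps}^{s\pm\delta}$ and $G_{o,\eps}^{s\pm\delta}$ are \emph{fixed} numbers, depending on $\eps$, $s$ and $\delta$ but not on $\alpha$, while the curvature attached to $E_\alpha^s$ at these nearby levels grows like $1/\alpha$. Throughout I would assume $0<\delta<|s|$, so that the levels $s\pm\delta$ keep the sign of $s$ and \eqref{eq:boundedsupp} guarantees that the relevant level sets of $f$ are bounded subsets of $B(0,1)$; I treat $s>0$ in detail, the case $s<0$ being identical after switching to lower level sets and flipping signs as in Remark~\ref{rem:signs}.

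First I would record the three curvature identities. By lifting the obstacle constraints exactly as in \eqref{eq:kappailambda}, the set $G_{i,\eps}^{s+\delta}=(G^{s+\delta})^{\lambda(\eps,s+\delta)}$ is an unconstrained minimizer of $E\mapsto\per(E)-\int_E\kappa_{i,\eps}^{s+\delta}$ with $\kappa_{i,\eps}^{s+\delta}=\lambda(\eps,s+\delta)$ on $G^{s+\delta}$, so that, since $G_{i,\eps}^{s+\delta}\subset G^{s+\delta}$, its curvature equals the constant $\lambda(\eps,s+\delta)$ on all of $G_{i,\eps}^{s+\delta}$; likewise $G_{o,\eps}^{s-\delta}$ minimizes $E\mapsto\per(E)-\int_E\kappa_{o,\eps}^{s-\delta}$ with $\kappa_{o,\eps}^{s-\delta}=-\lambda(\eps,s-\delta)\,g_2\gs-\lambda(\eps,s-\delta)$ on $\R^d\setminus G^{s-\delta}$, using $g_2\ls1$; and by Proposition~\ref{prop:slicedROFpsi}, $E_\alpha^s$ is a variational mean curvature minimizer with curvature $-\tfrac1\alpha\psi'(s-f)$. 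Since $\psi$ is even, strictly convex, of class $\mathcal C^1$ and vanishes at $0$, we have $\psi'(0)=0$ and $\psi'$ strictly increasing, hence $\psi'(\delta)>0$.

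Then for the first inclusion I would apply Lemma~\ref{lem:basiccomp} to $E_1=G_{i,\eps}^{s+\delta}$ and $E_2=E_\alpha^s$. On $E_1\setminus E_2\subset G^{s+\delta}=\{f>s+\delta\}$ the curvature of $E_1$ is $\lambda(\eps,s+\delta)$, whereas there $s-f<-\delta$ forces the curvature $-\tfrac1\alpha\psi'(s-f)$ of $E_2$ to exceed $\tfrac1\alpha\psi'(\delta)$; hence once $\alpha<\psi'(\delta)/\lambda(\eps,s+\delta)$ one has the strict inequality of curvatures on $E_1\setminus E_2$ required by Lemma~\ref{lem:basiccomp}, giving $|G_{i,\eps}^{s+\delta}\setminus E_\alpha^s|=0$. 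For the second inclusion I would reverse the roles, taking $E_1=E_\alpha^s$ and $E_2=G_{o,\eps}^{s-\delta}$: on $E_1\setminus E_2\subset\R^d\setminus G^{s-\delta}=\{f\ls s-\delta\}$ one has $s-f\gs\delta$, so the curvature of $E_1$ is $\ls-\tfrac1\alpha\psi'(\delta)$, while that of $E_2$ is $\gs-\lambda(\eps,s-\delta)$; so for $\alpha<\psi'(\delta)/\lambda(\eps,s-\delta)$ Lemma~\ref{lem:basiccomp} yields $|E_\alpha^s\setminus G_{o,\eps}^{s-\delta}|=0$. Taking $\alpha$ below $\psi'(\delta)\big/\max\{\lambda(\eps,s-\delta),\lambda(\eps,s+\delta)\}$ secures both inclusions simultaneously, and the identical computation with the lower level sets $\{u_\alpha<s\}$ and $\{f<s\pm\delta\}$ handles $s<0$.

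The one point that needs a word of justification, and which I expect to be the only real obstacle, is that the obstacle-lifting reformulation used in Lemma~\ref{lem:inclusions} applies just as well to the level sets $G^s$ of a general $f\in\BV(\R^d)$ as it did to a single indicatrix set $D$. This is however immediate, since the constructions of $\kappa_{G^s}$ and of $(G^s)^{\pm\lambda}$ in Definitions~\ref{def:gR} and \ref{def:dlambda} only require $G^s$ to be a bounded finite perimeter set contained in $B(0,1)$, which holds here by \eqref{eq:boundedsupp} and by the convention of using lower level sets when $s<0$; one may moreover take $\lambda(\eps,s)>2d$ without harming the approximation, so that $G_{o,\eps}^s$ is bounded by Lemma~\ref{lem:boundedcomplement}. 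Everything else reduces to the strict monotonicity of $\psi'$ together with a single application of Lemma~\ref{lem:basiccomp} per inclusion.
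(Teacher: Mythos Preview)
Your proposal is correct and follows essentially the same approach as the paper: the paper writes out, for each inclusion, the two minimality inequalities, sums them and uses \eqref{eq:subaddper} to reach an integral inequality on the set difference, which is precisely the proof of Lemma~\ref{lem:basiccomp} unrolled inline, and then concludes with the same threshold $\alpha\ls\psi'(\delta)/\lambda(\eps,s\pm\delta)$ using strict monotonicity of $\psi'$. Your packaging via a direct appeal to Lemma~\ref{lem:basiccomp} (after the obstacle-lifting from \eqref{eq:kappailambda}) is a clean equivalent; the only cosmetic difference is that the paper bounds $\kappa_{o,\eps}^{s-\delta}\gs-\lambda(\eps,s-\delta)$ on $B(0,1)$ and invokes \eqref{eq:boundedsupp}, whereas you use $g_2\ls1$ globally, which is just as good.
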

\begin{proof}
We assume that $s>0$, since the case $s<0$ follows in a completely analogous way after noticing that using lower level sets induces a change of sign in \eqref{eq:ROFls}, as well as a change in the direction of inclusions with respect to $s$.
Therefore, let $s>0$ and $\delta >0$ be fixed. To simplify notation in the proof, since $\eps$ is also fixed, we omit it and denote
\begin{gather*}G_i^{s+\delta}:=G_{i,\eps}^{s+\delta}=(G^{s+\delta})^{\lambda(\eps,{s+\delta})}, \,\ \kappa^{s+\delta}_i:=\kappa^{s+\delta}_{i,\eps} \ls \lambda(\eps,{s+\delta}), \\ G^{s-\delta}_o:=G^{s-\delta}_{o,\eps} = (G^{s-\delta})^{-\lambda(\eps,s-\delta)} \,\text{ and }\,\kappa^{s-\delta}_o:=\kappa^{s-\delta}_{o,\eps} \gs -\lambda(\eps,s-\delta).\end{gather*}
Using the minimality of $\eas$ in \eqref{eq:ROFls}, one can write
\[ \per(\eas) + \int_{\eas} \frac{\psi'(s-f)}{\alpha} \ls \per(\eas \cup G_i^{s+\delta}) + \int_{\eas \cup G_i^{s+\delta}} \frac{\psi'(s-f)}{\alpha}.\]
On the other hand, since $\kappa^{s+\delta}_i$  is a variational curvature of $G_i^{s+\delta}$, one has
\[ \per(G_i^{s+\delta}) - \int_{G_i^{s+\delta}} \kappa^{s+\delta}_i \ls \per(G_i^{s+\delta} \cap \eas) - \int_{G_i^{s+\delta} \cap \eas} \kappa^{s+\delta}_i.\]
Summing these two inequalities and using \eqref{eq:subaddper}, we get
\begin{equation}-\int_{G_i^{s+\delta} \setminus \eas} \kappa^{s+\delta}_i \ls  \int_{G_i^{s+\delta} \setminus \eas} \frac{\psi'(s-f)}{\alpha}. \label{eq:ineq}\end{equation}
Now, recall that $G_i^{s+\delta} \subset G^{s+\delta}$, meaning that $f \gs s+\delta$ on this set. Hence $s-f \ls -\delta$ and, since $\psi'$ is increasing and $\psi$ is even, \eqref{eq:ineq} implies
\[\int_{G_i^{s+\delta} \setminus \eas}  \left( -\kappa^{s+\delta}_i + \frac{\psi'(\delta)}{\alpha} \right) \ls 0. \]
Since $\kappa^{s+\delta}_i \ls \lambda(\eps,s+\delta)$, as soon as $\alpha \ls \psi'(\delta) / \lambda(\eps,s+\delta)$, which is always possible since $\psi'(\delta)>0$ by strict monotonicity, one must have $|G_i^{s+\delta} \setminus \eas| = 0.$

Similarly, the equality $|E_\alpha^{s} \setminus G_o^{s-\delta}|=0$ is obtained writing
\[ \per(\eas) + \int_{\eas} \frac{\psi'(s-f)}{\alpha} \ls \per(\eas \cap G_o^{s-\delta}) + \int_{\eas \cap G_o^{s-\delta}} \frac{\psi'(s-f)}{\alpha}\]
and
\[\per(G_o^{s-\delta}) - \int_{G_o^{s-\delta}} \kappa^{s-\delta}_o \ls \per(G_o^{s-\delta} \cup \eas) - \int_{G_o^{s-\delta} \cup \eas} \kappa^{s-\delta}_o.\]
Summing these inequalities we obtain
\[ \int_{\eas \setminus G_o^{s-\delta}} \frac{\psi'(s-f)}{\alpha} \ls - \int_{\eas \setminus G_o^{s-\delta}} \kappa^{s-\delta}_o.\]
Now, the complement of $G_o^{s-\delta}$ contains the complement of $G^{s-\delta}$, therefore on this set, one has $f \ls s-\delta$, which implies
\[ \int_{\eas \setminus G_o^{s-\delta}} \left( \frac{\psi'(\delta)}{\alpha} + \kappa^{s-\delta}_o \right) \ls 0,\]
which, together with $\kappa^{s-\delta}_o \gs - \lambda(\eps,s-\delta)$ on $B(0,1)$ and \eqref{eq:boundedsupp} forces the expected equality as soon as $\alpha \ls  \psi'(\delta)/\lambda(\eps,s-\delta)$.
\end{proof}

We can now prove the main result of this section:

\begin{proof}[Proof of Theorem \ref{thm:generichausdorff}]
The proof strongly relies on Lemma \ref{lem:delta}, and once again we assume without loss of generality that $s>0$ is fixed. Let $\eta >0$ and $\eps = \eta/2$. First, we show that one can find $\alpha_0$ such that $\dh(E_\alpha^s, G^s) \ls \eta$ for every $\alpha \ls \alpha_0.$
Using assumption \eqref{eq:assumhausdorff}, there exists $\delta >0$ such that  
 \begin{equation}\dh(G^{s\pm \delta},G^s) \ls \eps. \label{eq:choicedelta} \end{equation} Then, Lemma \ref{lem:delta} ensures the existence of $\alpha_0$ such that for $\alpha \ls \alpha_0$, we have up to measure zero $G_{i,\eps}^{s+\delta} \subset E_\alpha^s \subset G_{o,\eps}^{s-\delta}$. Now, we just have to note that 
 \[\dh(E_\alpha^s,G^s) = \max \left\{ \sup_{x \in \eas} \dist(x,G^s), \, \sup_{x \in G^s} \dist(x,\eas) \right \},\]
for which
\[G_{i,\eps}^{s+\delta} \subset E_\alpha^s \Rightarrow \sup_{x \in G^s} \dist(x,\eas)  \ls \sup_{x \in G^s} \dist(x,G_{i,\eps}^{s+\delta})\] and \[E_\alpha^s \subset G_{o,\eps}^{s-\delta} \Rightarrow \sup_{x \in \eas} \dist(x,G^s) \ls \sup_{x\in G_{o,\eps}^{s-\delta}} \dist(x,G^s).\]
Now, the triangle inequality for the Hausdorff distance, \eqref{eq:choicedelta}, the Hausdorff convergence of Lemma \ref{lem:approxsets} and Theorem \ref{thm:hausdorffequiv} imply
\begin{align*}\sup_{x \in G^s} \dist(x,G_{i,\eps}^{s+\delta}) &\ls \dh(G^s,G_{i,\eps}^{s+\delta}) \\& \ls \dh(G^s, G^{s+\delta}) + \dh(G^{s+\delta}, G_{i,\eps}^{s+\delta}) \ls 2\eps = \eta,\end{align*}
and
\begin{align*}\sup_{x \in G_{o,\eps}^{s-\delta}} \dist(x,G^s) &\ls \dh(G^s,G_{o,\eps}^{s-\delta}) \\&\ls  \dh(G^s, G^{s-\delta}) + \dh(G^{s-\delta}, G_{o,\eps}^{s-\delta}) \ls 2\eps = \eta.\end{align*}
We therefore conclude \[ \dh(G^s, E_\alpha^s) \ls 2\eps = \eta \text{ for }\alpha \ls \alpha_0,\] as claimed.

Similarly, using the second part of \eqref{eq:assumhausdorff} we notice that Lemma \ref{lem:delta} also provides us with the reverse inclusions for the complements
\[\R^d \setminus G_{i,\eps}^{s+\delta} \,\supset\, \R^d \setminus E_\alpha^s \,\supset\, \R^d \setminus G_{o,\eps}^{s-\delta},\]
so we find, possibly reducing $\alpha_0$, that also 
\[\dh(\R^d \setminus E_\alpha^s, \R^d \setminus G^s) \ls \eta \text{ for }\alpha \ls \alpha_0.\]
Using inequality \eqref{eq:hausineq} of Proposition \ref{prop:hausineq}, convergence in Hausdorff distance of the sets $E^s_\alpha$ and their complements as $\alpha \to 0$ implies convergence of the boundaries.
\end{proof}

\begin{remark}
We recall that $|E_\alpha^s \Delta G^s| \to 0$ holds for a.e.~$s$ because of the strong $L^1$ convergence $u_\alpha \to u$, which is implied by the support bound \eqref{eq:boundedsupp} and the $L^1_\loc$ convergence proved in Proposition \ref{prop:existconv}.
\end{remark}

The assumption we used for the level sets $G^s$ holds for many well-behaved functions, in particular:

\begin{proposition}\label{prop:densityimplies}
Let $f$ be such that the level sets $G^s$ satisfy uniform density estimates at some scale $r_0$ and constant $C$, independent of the level $s$. Then \eqref{eq:assumhausdorff} holds for a.e.~$s$.
\end{proposition}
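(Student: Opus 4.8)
The plan is to recognize this statement as Theorems~\ref{thm:bdyhausdorff} and~\ref{thm:hausdorffequiv} applied level by level: once we know that the family $\{G^{s+\nu}\}_\nu$ has uniformly bounded measure, satisfies uniform density estimates, and converges to $G^s$ in $L^1$, both limits in~\eqref{eq:assumhausdorff} follow. The uniform density estimates are exactly the hypothesis and the uniform measure bound is automatic from the bounded support of $f$, so the only genuinely new ingredient to establish is the convergence $|G^{s+\nu}\Delta G^s|\to0$ for a.e.~$s$.

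First I would prove this $L^1$ convergence. Fix $s>0$; the case $s<0$ is identical after exchanging upper and lower level sets (and noting that $\{f<s\}$ is then also contained in a fixed bounded set). Since $f$ has bounded support, all the sets $\{f>t\}$ with $t>0$ lie in one fixed bounded set, so $|G^{s+\nu}|\ls|G^s|<\infty$ for small $\nu>0$. As $\nu\downarrow0$ the sets $\{f>s+\nu\}$ increase with union $\{f>s\}=G^s$, hence $|G^s\setminus G^{s+\nu}|\to0$; as $\nu\uparrow0$ they decrease with intersection $\{f\gs s\}$, hence $|G^{s+\nu}\setminus G^s|\to|\{f=s\}|$. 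Since for each sign of $\nu$ the sets $G^s$ and $G^{s+\nu}$ are nested, $G^s\Delta G^{s+\nu}$ equals one of these two differences, so $\limsup_{\nu\to0}|G^s\Delta G^{s+\nu}|\ls|\{f=s\}|$. The set of levels with $|\{f=s\}|>0$ is at most countable, so $|G^s\Delta G^{s+\nu}|\to0$ for every other $s$, in particular for a.e.~$s$.

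Now fix such an $s$ and an arbitrary sequence $\nu_n\to0$; for $n$ large all the $s+\nu_n$ share the sign of $s$, so $\{G^{s+\nu_n}\}_n\cup\{G^s\}$ is a family of level sets of $f$ of uniformly bounded measure satisfying uniform inner density estimates with constant $C$ at scale $r_0$. Since $|G^{s+\nu_n}\Delta G^s|\to0$, Proposition~\ref{prop:hausdorff_of_sets} gives $\dh(G^{s+\nu_n},G^s)\to0$, and as $\{\nu_n\}$ was arbitrary this is the first limit in~\eqref{eq:assumhausdorff}. For the second limit one wants to run the same argument for the complements, which satisfy uniform inner density estimates precisely because the $G^t$ satisfy uniform outer ones, and for which $|(G^{s+\nu_n})^c\Delta(G^s)^c|=|G^{s+\nu_n}\Delta G^s|\to0$.

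The only delicate point — and the one I would be most careful about — is that the complements $(G^t)^c$ have infinite measure, which falls outside the literal hypotheses of Proposition~\ref{prop:hausdorff_of_sets}, even though its proof uses only the inner density estimates and the $L^1$ convergence and never the measure bound. To stay strictly within the stated results I would instead work with the bounded sets $H^t:=B(0,\rho)\setminus G^t$ for a fixed $\rho$ so large that $G^t\subset B(0,\rho-1)$ for all relevant levels: at boundary points lying in $B(0,\rho)$ these inherit the uniform outer density estimate of $G^t$, while at boundary points on $\partial B(0,\rho)$ — where $G^t$ is absent — they satisfy the trivial half-space density bound, so the $H^t$ enjoy uniform inner density estimates; moreover $|H^{s+\nu_n}\Delta H^s|\ls|G^{s+\nu_n}\Delta G^s|\to0$, so Proposition~\ref{prop:hausdorff_of_sets} yields $\dh(H^{s+\nu_n},H^s)\to0$. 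Since $(G^t)^c=H^t\cup(\R^d\setminus B(0,\rho))$ with the same fixed tail, the elementary inequality $\dh(A\cup C,B\cup C)\ls\dh(A,B)$ gives $\dh((G^{s+\nu_n})^c,(G^s)^c)\to0$, which is the second limit in~\eqref{eq:assumhausdorff}. (Alternatively, one obtains $\dh(\partial G^{s+\nu_n},\partial G^s)\to0$ from Theorem~\ref{thm:bdyhausdorff} and then reads off both limits from Theorem~\ref{thm:hausdorffequiv}.)
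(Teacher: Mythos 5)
Your proof is correct and follows the same underlying strategy as the paper's: the level-wise uniform density estimates together with $L^1$ convergence of the level sets give Hausdorff convergence of the sets and of their complements, which is precisely \eqref{eq:assumhausdorff}. You supply more explicit detail than the paper at two points — the countability argument justifying $|G^s\Delta G^{s+\nu}|\to 0$ for a.e.~$s$ (the paper dismisses this as ``clear''), and the truncation $H^t=B(0,\rho)\setminus G^t$ to apply Proposition~\ref{prop:hausdorff_of_sets} to the unbounded complements (the paper simply reruns the contradiction argument for $\R^d\setminus G^s$ without worrying about the measure hypothesis) — but the core mechanism is identical.
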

\begin{proof}
By the assumption and arguing as for \eqref{eq:intinnerdens} in Proposition \ref{prop:hausdorff_of_sets}, we have that at any point $x \in G^s$ we have the inner density estimate
\begin{equation}\label{eq:innerdens}\frac{|G^s \cap B(x,r)|}{|B(x,r)|} \gs \overline{C},\end{equation}
for $r \ls \overline{r}_0 = 2r_0$ and $r_0, \overline{C} = C/2^d$ independent of $x$ and $s$. Moreover, since the $G^s$ are decreasing in $s$ we may assume $\delta > 0$ when taking the limit, and to conclude that $\dh(G^s, G^{s+\delta}) \to 0$ we just need to check
\begin{equation}\label{eq:halfhaus}\sup_{x \in G^{s}}\dist(x, G^{s+\delta}) \xrightarrow{\delta \to 0} 0,\end{equation}
since the other term in the Hausdorff distance vanishes. However, if \eqref{eq:halfhaus} were false, we can find $\{\delta_i\}_i$, $\rho >0$ and $x_{\delta_i} \in G^s$ such that $\dist(x_{\delta_i}, G^{s+\delta_i}) > \rho$. But using \eqref{eq:innerdens} for $G^s$ and $x_{\delta_i}$, and possibly reducing $\rho$ so that $\rho \ls \overline{r}_0$ that
\[|G^s \setminus G^{s+\delta_i}| \gs |G^s \cap B(x_{\delta_i},\rho)| \gs \overline{C} |B(x_{\delta_i},\rho)| = \overline{C} |B(0,\rho)|,\]
which is a contradiction with $|G^s \Delta G^{s+\delta}| \to 0$ as $\delta \to 0$ that clearly holds for a.e.~$s$.

Moreover, we also have the outer density estimate
\begin{equation}\label{eq:outerdens}\frac{|B(x,r) \setminus G^s|}{|B(x,r)|} \gs \overline{C},\end{equation}
again for $r \ls \overline{r}_0=2r_0$ and $\overline{C}=C/2^d$. Since the sets $\R^d \setminus G^s$ are increasing in $\delta$, to conclude that $\dh(\R^d \setminus G^{s}, \R^d \setminus G^{s+\delta}) \to 0$ we must check
\begin{equation*}\label{eq:anotherhalfhaus}\sup_{x \in \R^d \setminus G^{s+\delta}}\dist(x, \R^d \setminus G^s) \xrightarrow{\delta \to 0} 0.\end{equation*}
If this does not hold, we can find $\{\delta_i\}_i$, $\rho >0$ and $x_{\delta_i} \in \R^d \setminus G^{s+\delta_i}$ such that $\dist(x_{\delta_i}, \R^d \setminus G^s) > \rho$. But using \eqref{eq:outerdens} for $G^{s+\delta_i}$ and $x_{\delta_i}$ and with $\rho \ls \overline{r}_0$ we have
\begin{align*}\left|\left( \R^d \setminus G^{s+\delta_i}\right) \setminus \left( \R^d \setminus G^s \right) \right| = |G^s \setminus G^{s+\delta_i}| &\gs | B(x_{\delta_i},\rho) \setminus G^{s+\delta_i}| \\&\gs \overline{C} |B(x_{\delta_i},\rho)| = \overline{C} |B(0,\rho)|,\end{align*}
leading again to a contradiction.
\end{proof}

The results of \cite{ChaDuvPeyPoo17} or \cite{IglMer20} then directly imply that this assumption is also valid when the source condition holds:

\begin{corollary}\label{cor:sourceimplies}
Let $f$ be such that
\[\partial \TV(f) \neq \emptyset.\]
Then the level sets $G^s$ of $f$ satisfy \eqref{eq:assumhausdorff} for a.e.~$s$.
\end{corollary}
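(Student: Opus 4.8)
The plan is to deduce this from Proposition~\ref{prop:densityimplies}: it suffices to show that when $\partial\TV(f)\neq\emptyset$, the level sets $G^s$ satisfy uniform density estimates at some scale $r_0>0$ and with some constant $C\in(0,1)$, both independent of the level $s$. Indeed, once this is established, Proposition~\ref{prop:densityimplies} gives \eqref{eq:assumhausdorff} for a.e.~$s$ directly.

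By hypothesis there is $v\in L^d(\R^d)$ with $v\in\partial\TV(f)$ in the sense of Definition~\ref{def:subgrad}; this makes sense because $f$ has bounded support, hence $f\in\BV(\R^d)\cap L^1(\R^d)\subset L^{d/(d-1)}(\R^d)$. By the observation recorded after Theorem~\ref{thm:optimalcurvature} (coarea plus layer-cake applied to a subgradient), for a.e.~$s>0$ the set $G^s=\{f>s\}$ minimizes \eqref{eq:varCurvE} with $\kappa=v$, while for a.e.~$s<0$ the set $G^s=\{f<s\}$ minimizes \eqref{eq:varCurvE} with $\kappa=-v$. In all cases $\kappa$ is one and the same $L^d$ function up to sign, so $\|\kappa\|_{L^d(\R^d)}=\|v\|_{L^d(\R^d)}$ does not depend on $s$; moreover $G^s\subset\{f\neq 0\}$, a set of finite measure, so the family $\{G^s\}$ also has uniformly bounded mass.

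I would then run the standard density estimate for minimizers of \eqref{eq:varCurvE} with an $L^d$ curvature. Fix such an $s$, write $E:=G^s$ and $\kappa=\pm v$, let $x\in\partial E$, and set $m(r):=|E\cap B(x,r)|$; by the density characterization of $\partial E$ from Subsection~\ref{subsec:preliminaries} we have $m(r)>0$ for all $r>0$, and $m$ is absolutely continuous with $m(0^+)=0$ and $m'(r)=\mathcal{H}^{d-1}(E^{(1)}\cap\partial B(x,r))$ for a.e.~$r$. Comparing $E$ with $E\setminus B(x,r)$ in \eqref{eq:varCurvE}, using $\per(E;\partial B(x,r))=0$ for a.e.~$r$, H\"older's inequality, and $\per(E\cap B(x,r))\gs\Theta_d\, m(r)^{(d-1)/d}$ from \eqref{eq:isoper} (valid for every $r$ since the complement of $E\cap B(x,r)$ has infinite measure), one obtains for a.e.~$r$ the differential inequality
\[\Theta_d\, m(r)^{(d-1)/d}\ls 2\,m'(r)+\|v\|_{L^d(B(x,r))}\,m(r)^{(d-1)/d}.\]
By absolute continuity of the integral of the fixed $L^1$ function $|v|^d$ there is $r_0>0$, depending only on $\|v\|_{L^d}$ and $d$, such that $\|v\|_{L^d(B(x,r))}\ls\Theta_d/2$ for every $x$ and every $r\ls r_0$; then $\tfrac{\mathrm{d}}{\mathrm{d}r}\,m(r)^{1/d}\gs\Theta_d/(4d)$ a.e.\ on $(0,r_0)$, and integrating from $0$ yields $m(r)\gs(\Theta_d/(4d))^d\,r^d$, which is the inner density estimate with a constant depending only on $d$. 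Comparing instead with $E\cup B(x,r)$ gives the outer estimate with the same $r_0$ and constant. Since neither depends on $s$, this is exactly the uniform family of density estimates required; alternatively one may simply invoke \cite{ChaDuvPeyPoo17,IglMer20}, where these estimates for level sets under the source condition are established. Plugging this into Proposition~\ref{prop:densityimplies} concludes the proof.

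The point needing the most care is the uniformity in $s$. It hinges on the fact that a single subgradient $v$ acts as a variational mean curvature for essentially all level sets simultaneously, so that both the constant and the scale can be fixed once and for all; in particular the scale must be extracted from absolute continuity of the fixed function $|v|^d$ rather than from any pointwise smallness of $\|v\|_{L^d(B(x,r))}$. The measure-theoretic bookkeeping with $\per(E;B(x,r))$ and the identity $m'(r)=\mathcal{H}^{d-1}(E^{(1)}\cap\partial B(x,r))$ valid for a.e.~$r$ is routine but must be carried out on the precise representative of $E$ fixed in Subsection~\ref{subsec:preliminaries}.
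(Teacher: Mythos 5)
Your proposal is correct and follows essentially the same route as the paper: both reduce to Proposition~\ref{prop:densityimplies} by establishing uniform density estimates for the level sets under the source condition, the only difference being that the paper invokes \cite{ChaDuvPeyPoo17,IglMer20} for those estimates while you re-derive them from the differential inequality; the key point that a single subgradient $v\in L^d$ is a variational mean curvature (up to sign) for essentially all $G^s$ simultaneously, making the scale and constant independent of $s$, is exactly what makes the reduction work and your bookkeeping with $m(r)$, $m'(r)$, and the isoperimetric inequality is sound.
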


This conclusion is however nontrivial, since it could be that \eqref{eq:assumhausdorff} fails for a set of values of full measure:

\begin{example}\label{ex:pingpongballs}
Let $\{B_i\}_{i \gs 0}$ be a collection of balls such that 
\[B_i \subset B(0,1), \quad B_i \cap B_j = \emptyset \text{ if }i\neq j,\quad \sum_{i=0}^\infty \per(B_i) < +\infty \]
We construct a function 
\[f := \sum_{i = 0}^\infty a_i 1_{C_i} \text{, with }C_i = B_i + \sigma(i)\left(\frac32,0\right)\]
and values $a_i$ and offset signs $\sigma(i) \in \{-1, 1\}$ that we now describe. For that, let $\mathcal{B}$ be the subset of functions in $2^\mathbb{N}$ with finitely many nonzero values, and let us enumerate its elements as $\{b_i\}_{i \gs 0}$ in an order in which the position of their last nonzero value is increasing, say
\[0,\ 1,\ 01,\ 11,\ 001,\ 011,\ 101,\ 111,\ 0001\ldots\]
The $\sigma(i)$ are defined iteratively by 
\[\sigma(0)=1,\ \sigma(1)=-1, \text{ and }\sigma(i) = (-1)\big(\sigma \circ \iota \circ p\big)(b_i) \text{ for }i \gs 2,\]
where $\iota: \mathcal{B} \to \mathbb{N} \cup \{0\}$ gives the index in the enumeration described, and $p: \mathcal{B}\setminus \{0\} \to \mathcal{B}$ is the map that deletes the last nonzero element. Since $\iota \circ p\,(b_i)<i$, this process is well defined. Finally, let
\[a_i = \sum_{k=1}^\infty \frac{1}{2^k} b_i(k).\]
Now, for any value $s \in (0,1)$ with an infinite binary expansion and all irrationals in particular, when one denotes as $s_\ell$ the expansion of $s$ up to $\ell$ digits (so $s_\ell = a_{i_\ell}$ for some $i_\ell$) the corresponding offset signs $\sigma(i_\ell)$ alternate with $\ell$, so 
\[\dh\big(\{f \gs s_\ell\}, \{f \gs s\}\big) \gs 1\text{ while }s_\ell \to s\text{ as }\ell \to \infty.\]
\end{example}

\section{Can we have uniform density estimates at fixed scale?}\label{sec:uniform}
In Section \ref{sec:local} we have proved Hausdorff convergence of level sets for denoising of $1_D + w$ by using density estimates at scales that converge to $0$ as $\alpha \to 0$. However, as the next example shows, often more can be expected out of the denoised solutions:
\begin{example}\label{ex:squarecirc}
Consider for $\ell_n >r_n \to 0$, with $S=(0,1)^2 \subset \R^2$ and $\psi(t)=t^2/2$ the situation 
\[f=1_S, w_n=1_{B_n} \text{ with }B_n:=B\big((-\ell_n, -\ell_n), r_n\big), \text{ so that }\dist(S,B_n)=\ell_n - r_n.\]
The nontrivial level sets of $f+w_n$ are all $S \cup B_n$, and we clearly have that $\dh\big(\partial(S \cup B_n), \partial S\big) \to 0$, but they contain a spurious connected component not seen in the limit. Moreover we notice that if $\ell_n / r_n \to +\infty$ the sets $S \cup B_n$ fail to satisfy uniform density estimates, since in that case for any $x_n \in \partial B_n$ we have
\[\frac{\big|(S \cup B_n) \cap B(x_n, \ell_n - r_n)\big|}{|B(x_n, \ell_n - r_n)|}=\frac{|B_n|}{|B(x_n, \ell_n - r_n)|}\to 0.\]
Now, again using $\ell_n / r_n \to +\infty$ we have that $\per(\conv(S \cup B_n))>\per\big(S \cup B_n)$, so we have for the level sets of minimizers of \eqref{eq:ROFpsi} that $E^s_{\alpha_n, w_n} \subset S \cup B_n$. Moreover, if $s$ and $\alpha_n$ are such that $(1-s)/\alpha_n < 2/r_n$ we have that $E^s_{\alpha_n,w_n} \cap B_n = \emptyset$, as can be seen from the computations done in Example \ref{ex:curvball}. This implies, when using a linear parameter choice $\alpha_n = C \|w_n\|_{L^2} = C \sqrt{\pi} \,r_n$, that whenever $C >1/(2\sqrt{\pi})=1/\Theta_2$ the effect of $w_n$ is not seen in the solution. In that case it is easy to see that level sets admit uniform density estimates at fixed scale, since then $E^s_{\alpha_n, w_n}=S^{s/\alpha_n}$ where the notation $S^{s/\alpha_n}$ is understood in the sense of Definition \ref{def:dlambda}, which are explicitly computed for the case of the square $S$ in Proposition \ref{prop:curvsquare} and Remark \ref{rem:offsets}.
\end{example}
Uniform density estimates along the sequence of level sets of minimizers provide not only Hausdorff convergence of the boundaries of level sets, but also prevent the appearance of spurious structures smaller than a certain scale. For general sets of finite perimeter as in Section \ref{sec:local}, since the limit is not regular, we cannot in general expect uniform density for the level sets approaching it.

On the opposite side, if we knew that $\partial \TV(1_D) \neq \emptyset$, uniform density estimates for the level sets are implied by the results of \cite{ChaDuvPeyPoo17} for $d=2$ and \cite{IglMer20} for $d>2$. However, this source condition excludes large classes of sets $D$ where we would expect the level sets of minimizers to also satisfy uniform density estimates, in particular sets $D$ with general Lipschitz boundary and satisfying density estimates themselves, like the square in the example above. The question then arises of how to derive these estimates for the solutions in such cases. We are not able to give a complete answer but we collect some observations, specialized to the two-dimensional case and $\psi(t)=t^2/2$. 

Examining the proof of the density estimates in Proposition \ref{prop:densityest}, to have a uniform scale at which the estimates hold it would be sufficient to have an inequality bounding the the integral of $\kappa_D$ on small sets by a quantity strictly less than their perimeter. In particular, since connected (indecomposable) components of $D$ inherit the curvature of the whole set, such an inequality implies that no arbitrarily small components can be present, a property which (by the dual stability of Proposition \ref{prop:dualstability}) is also true for the denoised level sets with an adequate parameter choice. We formulate this property as the following assumption:
\begin{assumption}
\label{ass:curvineq}
There is a constant $0 < \xi_D < 1$ and a scale $r_0>0$ such that for any $A \subset \R^2$ admitting a variational mean curvature in $L^2(\R^2)$, $x\in \R^2$ and $0< r\ls r_0$ the following inequalities hold:
\begin{equation}\label{eq:curvper}\begin{aligned}\int_{A \cap B(x,r)} \kappa_D^{\,+} &\ls \xi_D \per (B(x,r) \cap A)\text{, and }\\\int_{A \cap B(x,r)} \kappa_D^{\,-} &\ls \xi_D \per (B(x,r) \cap A),\end{aligned}\end{equation}
where $\kappa_D^{\,+} =\max(\kappa_D, 0)$ and $\kappa_D^{\,-} =-\min(\kappa_D, 0)$.
\end{assumption}

Let us check that Assumption \ref{ass:curvineq} holds for the square.

\begin{example}
Denote the unit square by $S \subset \R^2$ and the test set directly by $E:=A \cap B(x,r)$, since we will not use its form or regularity explicitly. By definition $\kappa_S \gs 0$ in $S$, and since $S$ is convex, Proposition \ref{prop:emptyFlambda} implies that $\kappa_S(x) = -\lambda_g g(x)$ for $x \in \R^2 \setminus S$. Let us start with the first inequality of \eqref{eq:curvper}. It is enough to prove that there is $\xi_S < 1$
\[\int_E \kappa_S \ls \xi_S \per(E),\]
for all $E \subset S$ with $|E|$ small and $\operatorname{diam}(E) < 1/2$. Assuming $|E| < |S \setminus C_S| / 4$ with $C_S$ the Cheeger set of $S$, and in view of the optimal curvature of the square \eqref{eq:curvsquare} we have that
\[\int_E \kappa_S \ls \int_{(S \setminus S^\Lambda) \cap Q^1} \kappa_S,\]
for $S^\Lambda$ again in the sense of Definition \ref{def:dlambda}, with some $\Lambda$ such that $|(S \setminus S^\Lambda) \cap Q^1|=|E|$ and $Q^1$ the lower left quadrant. Now, for each $\lambda > 0$
\[|(S \setminus S^\lambda) \cap Q^1|=\frac{1}{\lambda^2}-\frac{\pi}{4 \lambda^2}=\frac{4-\pi}{4\lambda^2}\]
Therefore
\begin{equation}\begin{aligned}\label{eq:intcurvsquarecorner}
\int_{(S \setminus S^\Lambda) \cap Q^1} \kappa_S &= \Lambda|\{\kappa_S > \Lambda\}|+ \int_{\Lambda}^{\infty} |\{\kappa_S > \lambda\}| \dd \lambda \\&= \Lambda |(S \setminus S^\Lambda) \cap Q^1| + \int_{\Lambda}^{\infty} |(S \setminus S^\lambda) \cap Q^1| \dd \lambda \\
&=\frac{4-\pi}{2\Lambda}
\end{aligned}
\end{equation}
and on the other hand, by the isoperimetric inequality
\begin{equation}\label{eq:isopercornerest}\per(E) \gs 2 \sqrt{\pi|E|} = 2 \sqrt{\pi \frac{4-\pi}{4 \Lambda^2}}=\frac{\sqrt{\pi(4-\pi)}}{\Lambda},\end{equation}
and we get the first inequality of \eqref{eq:curvper} with any $\xi_S$ such that
\[1 > \xi_S > \frac{4-\pi}{2 \sqrt{\pi(4-\pi)}}\approx 0.26.\]
To prove the second part of \eqref{eq:curvper} we notice that whenever
\begin{equation}\label{eq:rboundlambdag}r \ls \frac{2\sqrt{\pi}}{\xi_S \,\lambda_g \,|B(0,1)|^{1/2}}\end{equation}
for any $F$ with $\operatorname{diam}(F) \ls r$ we can write
\begin{equation*}\begin{aligned}\frac{1}{\per(F)} \int_F \kappa_S^{\,-} &= \frac{\lambda_g}{\per(F)} \int_{F \setminus S} g \ls \frac{\lambda_g}{\per(F)} \int_F g \\&\ls \frac{\lambda_g |F|}{\per(F)} \ls \frac{\lambda_g |F|}{2\sqrt{\pi} |F|^{1/2}}=\frac{\lambda_g |F|^{1/2}}{2\sqrt{\pi}}\ls \frac{\lambda_g \,r\, |B(0,1)|^{1/2}}{2\sqrt{\pi}} \ls \xi_S,\end{aligned}\end{equation*}
where we have used $g \ls 1$, the isoperimetric inequality and \eqref{eq:rboundlambdag}. 
\end{example}
\begin{remark}
For a convex polygon $P$, one could try to repeat the proof above around a vertex with angle $2\theta$, and $\lambda>0$ large enough so that the contact points of a circle of radius $1/\lambda$ lie on the two edges the vertex belongs to. The analogous formulas to \eqref{eq:intcurvsquarecorner} and \eqref{eq:isopercornerest} are then
\[\int_E \kappa_P \ls \frac{2}{\Lambda} \left(\frac{1}{\tan \theta} + \theta - \frac{\pi}{2} \right) \text{ and } \per(E) \gs 2 \sqrt{\pi|E|} = \frac{2 \sqrt{\pi}}{\Lambda} \sqrt{\left(\frac{1}{\tan \theta}  + \theta - \frac{\pi}{2} \right)}.\]
However, the quotient of these two quantities is below $1$ only for $\theta$ larger than $\approx 0.219$. It is very likely this is a problem of the proof method, since the isoperimetric estimate used is far from sharp, and that in fact Assumption \ref{ass:curvineq} holds for any polygon $P$. Convexity is likely also not required, since a polygon cannot have arbitrarily thin necks, so by the results cited in Remark \ref{rem:offsets} inclusions of balls characterize $P^\lambda$ for $\lambda$ large enough, and we can also determine $P^{-\lambda}$ analogously.
\end{remark}

Now we check that Assumption \ref{ass:curvineq} indeed implies uniform density estimates at a fixed scale. Our scheme will be to work first with the solutions corresponding to noiseless data, and then comparing them with the noisy ones using Proposition \ref{prop:curvgodown}.

\begin{theorem}
\label{thm:cvnoisy}
Let $w \in L^2(\R^2)$ and $\alpha$ satisfying the parameter choice
\begin{equation}\label{eq:superlinearparam}\frac{\|w\|_{L^2}}{\alpha}\ls \eta < 2\sqrt{\pi}\left( 1-\xi_D \right),\end{equation}
and let $u_{\alpha,w}$ denote the corresponding minimizers of \eqref{eq:ROFpsi} with $f=1_D$, where $D$ satisfies Assumption \ref{ass:curvineq} with constant $\xi_D$ and scale $r_0$. Then there is $C_0 \in (0,1)$ such that for a.e.~$s$, the level sets \[E_{\alpha,w}^s:=\{u_{\alpha, w}>s\}\text{ if }s>0 \text{ and }E_{\alpha,w}^s:=\{u_{\alpha, w}<s\}\text{ if }s<0\] satisfy uniform density estimates at scale $r_0$  and with constant $C_0$, that is
\[\frac{|E^s_{\alpha, w} \cap B(x,r)|}{|B(x,r)|} \gs C_0, \text{ and }\frac{|B(x,r) \setminus E^s_{\alpha, w}|}{|B(x,r)|} \gs C_0\]
for all $x \in \partial E^s_{\alpha, w}$ and $0 < r \ls r_0$.
\end{theorem}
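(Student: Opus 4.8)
\emph{Strategy.} The plan is to run a De Giorgi--type almost‑minimality (density‑estimate) argument on the level sets $E^s_{\alpha,w}$, where the role usually played by an $L^\infty$ bound on the dual variable is taken instead by the \emph{perimeter}-type control of Assumption \ref{ass:curvineq} together with the sharp comparison $|v_{\alpha,0}|\leq|\kappa_D|$ of Proposition \ref{prop:curvgodown}; this is exactly what keeps the scale fixed, in contrast with Proposition \ref{prop:densityest} where it degenerates as $\alpha\to0$. Throughout $\psi(t)=t^2/2$, so $v_{\alpha,w}=\alpha^{-1}(1_D+w-u_{\alpha,w})$. By Proposition \ref{prop:compsupp}, whose hypothesis \eqref{eq:isoperstabil} holds here thanks to \eqref{eq:superlinearparam} and Proposition \ref{prop:dualstability}, $u_{\alpha,w}$ is compactly supported, and since $u_{\alpha,w}\in L^2(\R^2)$ this gives $v_{\alpha,w}\in L^2(\R^2)$; also $v_{\alpha,0}=\kappa_\alpha$ is bounded with compact support by Corollary \ref{cor:nonewlevels}. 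For a.e.\ $s>0$ the set $E^s_{\alpha,w}$ minimizes $E\mapsto\per(E)-\int_E v_{\alpha,w}$ (for $s<0$ the lower level set minimizes with $-v_{\alpha,w}$); in particular $E^s_{\alpha,w}$, and its complement with variational mean curvature $-v_{\alpha,w}$ (here one may restrict competitors to a fixed ball containing $\supp u_{\alpha,w}$), are admissible sets $A$ in Assumption \ref{ass:curvineq}.

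\emph{Two ingredients.} First, a sign‑refined pointwise bound: since $v_{\alpha,0}=\alpha^{-1}(1_D-u_{\alpha,0})\geq0$ exactly on $D$, while $\kappa_D>0$ on $D$ and $\kappa_D<0$ off $D$, the inequality $|v_{\alpha,0}|\leq|\kappa_D|$ upgrades to $v_{\alpha,0}^{\,+}\leq\kappa_D^{\,+}$ and $v_{\alpha,0}^{\,-}\leq\kappa_D^{\,-}$ a.e. Second, dual stability: because $\psi'=\Id$, one has $\alpha v_{\alpha,w}=(\Id-\mathrm{prox}_{\alpha\TV})(f+w)$ with $\Id-\mathrm{prox}_{\alpha\TV}$ nonexpansive on $L^2(\R^2)$, hence $\|v_{\alpha,w}-v_{\alpha,0}\|_{L^2(\R^2)}\leq\|w\|_{L^2}/\alpha\leq\eta$ by \eqref{eq:superlinearparam} (this is the content of Proposition \ref{prop:dualstability} for $\psi(t)=t^2/2$).

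\emph{The estimate.} Fix such an $s$, write $E:=E^s_{\alpha,w}$, take $x\in\partial E$ and $0<r\leq r_0$, and set $m(r):=|E\cap B(x,r)|$, which is strictly positive for all $r$ since $x\in\partial E$. For a.e.\ $r$, the standard slicing identities for cutting a set with a ball, applied to the minimality comparison of $E$ with $E\setminus B(x,r)$, give
\[
\per\big(E\cap B(x,r)\big)\leq 2m'(r)+\int_{E\cap B(x,r)}v_{\alpha,w}.
\]
Splitting $v_{\alpha,w}=v_{\alpha,0}+(v_{\alpha,w}-v_{\alpha,0})$: the first part is bounded using $v_{\alpha,0}^{\,+}\leq\kappa_D^{\,+}$ and Assumption \ref{ass:curvineq} (with $A=E$) by $\xi_D\per(E\cap B(x,r))$; the second by Cauchy--Schwarz, the dual stability, and the isoperimetric inequality $\per(E\cap B(x,r))\geq\Theta_2\,m(r)^{1/2}$ with $\Theta_2=2\sqrt{\pi}$, yielding $\tfrac{\eta}{2\sqrt\pi}\per(E\cap B(x,r))$. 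Hence $\theta\,\per(E\cap B(x,r))\leq 2m'(r)$ with $\theta:=1-\xi_D-\tfrac{\eta}{2\sqrt\pi}>0$ by \eqref{eq:superlinearparam}, and combining once more with the isoperimetric inequality gives $m'(r)\geq\sqrt\pi\,\theta\,m(r)^{1/2}$, which integrates (via $\tfrac{d}{dr}m^{1/2}\geq\tfrac{\sqrt\pi\theta}{2}$ and $m(0^+)=0$) to $m(r)\geq\tfrac{\pi\theta^2}{4}r^2$, i.e.\ the inner density estimate with $C_0:=\theta^2/4\in(0,1)$. The outer estimate follows identically by comparing $E$ with $E\cup B(x,r)$, now invoking $v_{\alpha,0}^{\,-}\leq\kappa_D^{\,-}$ and Assumption \ref{ass:curvineq} for $A=(E^s_{\alpha,w})^c$; and $s<0$ is handled by the same computation after the curvature sign change, with $v_{\alpha,0}^{\pm}$ and $\kappa_D^{\pm}$ exchanging roles. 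The noiseless statement of the opening paragraph of the subsection is the special case $\eta=0$.

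\emph{Main obstacle.} The crux is absorbing the first‑order term of the almost‑minimality inequality into the perimeter with a constant independent of $\alpha$, $w$ and $s$: a bound by $\|\kappa_D\|_{L^\infty}$ is unavailable and $v_{\alpha,0}$ is not uniformly bounded, so one genuinely needs both the perimeter‑type control of Assumption \ref{ass:curvineq} on $\kappa_D$ and the sharp sign‑refined comparison $v_{\alpha,0}^{\pm}\leq\kappa_D^{\pm}$; the threshold $2\sqrt{\pi}(1-\xi_D)$ in \eqref{eq:superlinearparam} is precisely what leaves room for the $\|w\|$-dependent term once $\xi_D$ has been spent on the $\kappa_D$ term. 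The remaining points — the a.e.\ $r$ slicing identities, checking that $E^s_{\alpha,w}$ and its complement carry $L^2$ variational mean curvatures, and the elementary ODE integration — are routine.
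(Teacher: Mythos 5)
Your proposal is correct and follows essentially the same route as the paper: the slicing/comparison inequality \eqref{eq:perimcut}, the sign-matched bound $v_{\alpha,0}^{\pm}\leq\kappa_D^{\pm}$ from Proposition \ref{prop:curvgodown}, Assumption \ref{ass:curvineq} to absorb the $\kappa_D$ term, dual stability (nonexpansiveness of $\Id-\mathrm{prox}_{\alpha\TV}$ for $\psi(t)=t^2/2$) to control the noise term, and the isoperimetric inequality to close the differential inequality in $r$. The only cosmetic difference is that you convert the noise term into a multiple of $\per(E\cap B(x,r))$ before absorbing, while the paper keeps it as $m(r)^{1/2}\eta$ and applies the isoperimetric inequality once at the end; both reduce to the same threshold $\eta<2\sqrt{\pi}(1-\xi_D)$.
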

\begin{proof}
Since we aim to prove both inner and outer density estimates, we can assume without loss of generality that $s >0$, so that by Proposition \ref{prop:slicedROFpsi} $E^s_{\alpha, w}$ admits the variational curvature $v_{\alpha, w}= 1_D + w - u_{\alpha, w}$. For such $x$ and $r$ we have by the Cauchy-Schwartz inequality, Proposition \ref{prop:curvgodown} and since $\sign(v_{\alpha,0}) = \sign(\kappa_D)$ that
\begin{align}
 \int_{E_{\alpha,w}^s \cap B(x,r)} v_{\alpha,w} & \ls |E_{\alpha,w}^s \cap B(x,r)|^{1/2} \Vert v_{\alpha,w} - v_{\alpha,0} \Vert_{L^2(\R^2)} + \int_{E_{\alpha,w}^s \cap B(x,r)} v_{\alpha,0}\nonumber \\
 & \ls |E_{\alpha,w}^s \cap B(x,r)|^{1/2} \Vert v_{\alpha,w} - v_{\alpha,0} \Vert_{L^2(\R^2)} + \int_{E_{\alpha,w}^s \cap B(x,r)} \kappa_D^{\,+} \nonumber \\
 & \ls |E_{\alpha,w}^s \cap B(x,r)|^{1/2} \Vert v_{\alpha,w} - v_{\alpha,0} \Vert_{L^2(\R^2)} + \xi_D \per(E_{\alpha,w}^s \cap B(x,r)) \nonumber \\
 & \ls |E_{\alpha,w}^s \cap B(x,r)|^{1/2} \eta + \xi_D \per(E_{\alpha,w}^s \cap B(x,r)) \label{eq:bigineq} 
\end{align}
where we have used the first inequality in \eqref{eq:curvper} for the penultimate step and for the last step the parameter choice \eqref{eq:superlinearparam} combined with the dual stability of Proposition \ref{prop:dualstability}, since in this case $\sigma_{\psi^\ast} = \mathrm{Id}$. Plugging this in formula  \eqref{eq:perimcut} of Appendix \ref{sec:densityest}, we obtain
\[ \per(E_{\alpha,w}^s \cap B(x,r)) \left( 1-\xi_D \right) - |E_{\alpha,w}^s \cap B(x,r)|^{1/2} \eta \ls 2 \per\left(B(x,r) ; (E_{\alpha,w}^s)^{(1)}\right).\]
Using now the isoperimetric inequality, we get
\[  |E_{\alpha,w}^s \cap B(x,r)|^{1/2} \left( 2\sqrt{\pi}\left( 1-\xi_D \right) - \eta \right) \ls 2 \per\left(B(x,r) ; (E_{\alpha,w}^s)^{(1)}\right).\]
We can derive the inner density estimate $|E^s_{\alpha, w} \cap B(x,r)| \gs C_0|B(x,r)|$ with $C_0$ depending on $\eta$ by integrating this differential inequality up to $r_0$.

For the outer density, one proceeds in an analogous fashion with the complements $\R^2 \setminus E_{\alpha,w}^s$, which switches the sign of the curvature to $-v_{\alpha, w}$ and makes $\kappa_D^{\,-}$ play a role in \eqref{eq:bigineq} through the second inequality in \eqref{eq:curvper}.
\end{proof}

We notice that in the situation of Theorem \ref{thm:cvnoisy}, the convergence $\dh(\partial E_{\alpha,w}^s, \partial D) \to 0$ for $0<s<1$ follows then by Proposition \ref{prop:compsupp} and Theorem \ref{thm:bdyhausdorff}. Moreover:

\begin{corollary}
With $\alpha_n, w_n$ and $D$ satisfying the assumptions of Theorem \ref{thm:cvnoisy} and for either $s>1$ or $s<0$ we additionally have
\[\limsup_{n \to \infty} \partial E^s_{\alpha_n,w_n} = \emptyset,\]
where $\limsup \partial E^s_{\alpha_n,w_n}$ is defined to be \cite[Def.~4.1]{RocWet98} the set of all limits of subsequences of points in $\partial E^s_{\alpha_n,w_n}$.
\end{corollary}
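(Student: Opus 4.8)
The plan is to show that for every admissible $s>1$ (resp.\ $s<0$) the level sets $E^s_{\alpha_n,w_n}$ are Lebesgue-negligible once $n$ is large, so that, with the representatives fixed throughout, $\partial E^s_{\alpha_n,w_n}=\supp D1_{E^s_{\alpha_n,w_n}}=\emptyset$ from some index on, leaving no points out of which a limit could be extracted. This rests on two facts: these extreme level sets have vanishing measure, and, by Theorem \ref{thm:cvnoisy}, any nonempty one carries a definite amount of mass near each of its boundary points.

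First I would gather the consequences of the parameter choice \eqref{eq:superlinearparam}. Since $\psi(t)=t^2/2$ we have $\sigma_{\psi^\ast}=\mathrm{Id}$, so Proposition \ref{prop:dualstability} yields $\|v_{\alpha_n,w_n}-v_{\alpha_n,0}\|_{L^2(\R^2)}\ls\eta<2\sqrt\pi(1-\xi_D)<2\sqrt\pi=\Theta_2$, which is precisely hypothesis \eqref{eq:isoperstabil}. Proposition \ref{prop:compsupp} then gives an $R>0$, which after enlarging we may take $\gs 1$, with $\supp u_{\alpha_n,w_n}\subset B(0,R)$ for all $n$ and also $D\subset B(0,1)\subset B(0,R)$. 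As \eqref{eq:superlinearparam} also implies \eqref{eq:easyparamchoice}, Proposition \ref{prop:existconv} gives $u_{\alpha_n,w_n}\to 1_D$ in $L^1_{\loc}$, hence, because of the common compact support, in $L^1(B(0,R))$.

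For $s>1$ one has $u_{\alpha_n,w_n}-1_D>s-1>0$ on $E^s_{\alpha_n,w_n}=\{u_{\alpha_n,w_n}>s\}$, which lies in $B(0,R)$ up to a null set, so
\[(s-1)\,\big|E^s_{\alpha_n,w_n}\big|\ls\int_{E^s_{\alpha_n,w_n}}\big(u_{\alpha_n,w_n}-1_D\big)\ls\int_{B(0,R)}\big|u_{\alpha_n,w_n}-1_D\big|\xrightarrow[n\to\infty]{}0;\]
for $s<0$ the same estimate with $1_D-u_{\alpha_n,w_n}>|s|>0$ on $\{u_{\alpha_n,w_n}<s\}$ gives the analogous conclusion, so in both cases $|E^s_{\alpha_n,w_n}|\to0$. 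On the other hand, Theorem \ref{thm:cvnoisy} provides $C_0\in(0,1)$ and a scale $r_0$ such that the $E^s_{\alpha_n,w_n}$ obey uniform inner density estimates; so whenever $\partial E^s_{\alpha_n,w_n}\neq\emptyset$, picking $x$ in it and testing the estimate at radius $r_0$ forces $|E^s_{\alpha_n,w_n}|\gs C_0|B(0,r_0)|>0$. Comparing with $|E^s_{\alpha_n,w_n}|\to0$, this can hold for at most finitely many $n$; for all others $|E^s_{\alpha_n,w_n}|=0$ and $\partial E^s_{\alpha_n,w_n}=\emptyset$, whence $\limsup_{n\to\infty}\partial E^s_{\alpha_n,w_n}=\emptyset$.

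The main thing to be careful about is that Theorem \ref{thm:cvnoisy} really applies at these extreme values of $s$: this is so because Proposition \ref{prop:slicedROFpsi} makes $E^s_{\alpha,w}$ a minimizer of the associated sliced functional for \emph{every} $s\in(0,+\infty)$ (resp.\ $(-\infty,0)$), and the density-estimate argument in the proof of Theorem \ref{thm:cvnoisy} only uses the pointwise bounds $v_{\alpha,0}\ls\kappa_D^{\,+}$ (resp.\ $-v_{\alpha,0}\ls\kappa_D^{\,-}$) from Proposition \ref{prop:curvgodown} together with Assumption \ref{ass:curvineq}, both of which are independent of $s$. Everything else is routine.
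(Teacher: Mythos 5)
Your argument is correct and follows essentially the same route as the paper's: establish $|E^s_{\alpha_n,w_n}|\to 0$ from the $L^1$ convergence plus common compact support, then apply the uniform inner density estimate of Theorem \ref{thm:cvnoisy} at a boundary point to obtain the lower bound $|E^s_{\alpha_n,w_n}|\gs C_0|B(0,r_0)|$, and conclude by contradiction. You phrase the conclusion slightly more strongly (the boundary is eventually empty, rather than just the limsup being empty), and you add an explicit justification that Theorem \ref{thm:cvnoisy} is not restricted to $s\in(0,1)$; both are harmless elaborations on the same idea.
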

\begin{proof}
If $s <0$, or $s>1$, by the convergence $u_{\alpha_n,w_n} \to 1_D$ in $L^q$ we have $|E^s_{\alpha_n, w_n}| \to 0$. Assume for a contradiction that we had $x \in \limsup \partial E^s_{\alpha_n,w_n}$. Then we have a not relabelled subsequence and $x_{\alpha_n} \in \partial E^s_{\alpha_n,w_n}$ such that $x_{\alpha_n} \to x$. Now, as in the proof of Theorem \ref{thm:localhausdorff}, by using the inner density estimate, which now holds with constant $C_0$ and for scales $r \ls r_0$ uniformly both in $\alpha$ and the chosen points we get
\[|E^s_{\alpha_n,w_n}| \gs |B(x_{\alpha_n}, r_0) \cap E^s_{\alpha_n,w_n}| \gs C |B(0,r_0)|,\]
which contradicts $|E^s_{\alpha_n, w_n}| \to 0$.
\end{proof}

Observe that in the setting of Theorem \ref{thm:localhausdorff} where the density estimates depend on the distance to $\partial D$, the proof we have given for this corollary fails. Indeed, with such density estimates we could only get that $\dist(x, \partial D) \ls r$ for all $r > 0$ small enough and $x \in \limsup_\alpha \partial E^s_{\alpha,w}$, or
\[\limsup_\alpha \partial E^s_{\alpha,w} \subset \partial D,\]
which for $s<0$ or $s>1$ is not a satisfactory conclusion. We conclude with some further observations about when inequality \eqref{eq:curvper} could be expected to hold.

\begin{remark}
Although it is naturally of $L^2$ scaling for $\kappa_D$, Assumption \ref{ass:curvineq} can be formulated in more general spaces with this scaling, giving some hope that it could hold for sets with Lipschitz boundary. For example, we would have \eqref{eq:curvper} with $\xi_D < 1$ if we had $\Vert \kappa_D \Vert_{L^{2,w}} < 2\sqrt{\pi}$ for the weak $L^2$ norm. In fact, in the notation of \cite[Def. 3.3]{LiTor19}, it is also enough to have $\|\kappa_D\|_{S(\R^2)}<1$, and in \cite[Thm. 3.7]{LiTor19} it is shown that $S(\R^2)$ in fact coincides with the Morrey space $L^{1,1}$ (with different norms, a priori). The quantitative bounds are necessary, since the example in \cite[Ex. 8.4]{LiTor19} provides a set $D$ without density estimates, whose curvature $\kappa_D$ belongs to $L^{1,1}$.
\end{remark}

\begin{remark}\label{rem:cheegervsineq}
We have by definition that
\[\|\kappa_D\|_{L^{2,w}}=\sup_\lambda \lambda\, \big|\{|\kappa_D| \gs \lambda\}\big|^{1/2}.\]
Now, if $D$ is convex the construction of $\kappa_D$ implies that $\kappa_D \gs h(D)$ in $D$, for 
\[h(D)=\inf_{A \subset D} \frac{\per(A)}{|A|}\]
the Cheeger constant of $D$, attained by the unique Cheeger set $C_D$. So with the isoperimetric inequality and that $C_D \subset D$ we have
\[h(D)\, |\{\kappa_D \gs h(D)\}|^{1/2} = h(D)|D|^{1/2}=\frac{\per(C_D)}{|C_D|}|D|^{1/2} \gs 2 \sqrt{\pi} \,\frac{|D|^{1/2}}{|C_D|^{1/2}}\gs 2 \sqrt{\pi},\]
with equality if and only if $D$ is a circle. This means that to use the language of weak norms, it would be necessary to restrict/truncate to small scales or large curvatures.
\end{remark}

\begin{remark}
If $D$ is convex we have that $u_{\alpha,0} = (1-\alpha \kappa_D)^+ 1_D$ (see \cite[Prop.~2.2]{AltCas09} or \cite[Thm.~6]{CasChaNov15}). This implies that one can construct a vector field $z \in L^\infty(\R^d)$ with $|z|\ls 1$ and divergence $\kappa_D$, and which coincides with the normal to $D$ on $\partial D$. The Green formula would provide us with \eqref{eq:curvper} if $z$ was for example continuous in $\accentset{\circ}{D}$, since then cancellations of the flux would appear.
\end{remark}

\begin{remark}
An inequality resembling \eqref{eq:curvper} in Assumption \ref{ass:curvineq} also appears in some works dealing with prescribed mean curvature surfaces in periodic media, like \cite{ChaTho09} and \cite{GolNov12}. In that case, the setting is that of a bounded cell $Q$ and a potential $\tilde{g} \in L^d(Q)$ satisfying $\int_E \tilde{g} \ls (1-\delta)\per(E;Q)$ for fixed $\delta \in (0,1)$ and all $E\subset Q$ is used. In fact, it is proved in \cite[Prop.~4.1]{ChaTho09} using the results of \cite{BouBre03} that in this case there is a continuous vector field $z \in C(Q;\R^d)$ with $|z| \ls 1$ for which $\div z = \tilde{g}$, which is also incompatible with $\tilde{g}$ being the variational mean curvature of a nonsmooth set $D$, since in that case we would expect that $\restr{z}{\partial D} = \nu_D$ \cite[Thm.~3.7]{ChaGolNov15}. This, after Remark \ref{rem:cheegervsineq}, is yet more evidence that \eqref{eq:curvper} can only be expected for small $r$.
\end{remark}

\appendix
\section{Dual problem and its stability}\label{sec:dual}

\begin{proposition}\label{prop:dualROFpsi}
 Assume that $f,w \in L^{d/(d-1)}(\R^d)$. The Fenchel dual of \eqref{eq:ROFpsi} reads
 \begin{equation}
  \label{eq:dualrofpsi} \sup_{v \in \partial \TV(0)} \int v\,(f+w) - \frac 1 \alpha \int \psi^\ast(-\alpha v),
 \end{equation}
 which has a unique maximizer $v_{\alpha,w}$ that satisfies the optimality condition
 \begin{equation}
  \label{eq:vawformula}v_{\alpha,w}=-\frac{1}{\alpha} \psi'(u_{\alpha,w}-f-w)\in \partial \TV(u_{\alpha,w}),
 \end{equation}
where $u_{\alpha,w}$ is the unique minimizer of \eqref{eq:ROFpsi}.
\end{proposition}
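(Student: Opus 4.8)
The plan is to derive \eqref{eq:dualrofpsi} and \eqref{eq:vawformula} from Fenchel--Rockafellar duality on the Banach space $X := L^{d/(d-1)}(\R^d)$, whose dual we identify with $L^d(\R^d)$. Dividing the objective of \eqref{eq:ROFpsi} by $\alpha>0$ (which changes neither minimizers nor the optimality conditions), we write it as $\inf_{u \in X}\Phi(u) + \Psi(u)$ with $\Phi(u) := \tfrac1\alpha\int_{\R^d}\psi(u-f-w)$ and $\Psi := \TV$, extended by $+\infty$ outside $\BV$ exactly as in Definition \ref{def:subgrad}. The growth bound $|\psi(s)|\ls C|s|^{d/(d-1)}$ of \eqref{eq:psiassum} shows that $\Phi$ is finite on all of $X$ and locally bounded, since $\Phi(u) \ls \tfrac{C}{\alpha}\|u-f-w\|_{L^{d/(d-1)}}^{d/(d-1)}$; being convex and locally bounded, it is continuous on $X$, in particular continuous at $0$, where $\Psi(0)=0<+\infty$. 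This is precisely the constraint qualification required by the Fenchel--Rockafellar theorem, so there is no duality gap, the dual supremum $\sup_{v \in X^\ast}\big(-\Phi^\ast(-v) - \Psi^\ast(v)\big)$ is attained, and any dual maximizer $v$ together with the minimizer $u_{\alpha,w}$ of Proposition \ref{prop:existconv} satisfies the extremality relations $-v \in \partial\Phi(u_{\alpha,w})$ and $v \in \partial\Psi(u_{\alpha,w})$.

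Next I would compute the two conjugates. Since $\TV$ is convex, lower semicontinuous and positively one-homogeneous, it is the support function of the closed convex set $\partial\TV(0) = \{v \in L^d \mid \int_{\R^d} v\,\tilde u \ls \TV(\tilde u)\ \text{for all }\tilde u\}$, and therefore $\Psi^\ast = \iota_{\partial\TV(0)}$, so that the dual constraint is exactly $v \in \partial\TV(0)$. For $\Phi$, the substitution $t = u - f - w$, the standard rule that for a finite normal convex integrand the conjugate of $u \mapsto \int_{\R^d}\psi(u)$ on $L^{d/(d-1)}$ is $v \mapsto \int_{\R^d}\psi^\ast(v)$ on $L^d$, and the scaling identity $(\tfrac1\alpha\int\psi(\cdot))^\ast(v) = \tfrac1\alpha\int\psi^\ast(\alpha v)$ give
\[\Phi^\ast(-v) = -\int_{\R^d} v\,(f+w) + \frac1\alpha\int_{\R^d}\psi^\ast(-\alpha v).\]
Plugging both expressions into $\sup_v\big(-\Phi^\ast(-v) - \Psi^\ast(v)\big)$ yields exactly \eqref{eq:dualrofpsi}.

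For the optimality condition, the decisive point is that $\psi \in \mathcal{C}^1(\R)$ (noted in the remark after \eqref{eq:psiassum} as a consequence of uniform convexity of $\psi^\ast$). First one checks that $\psi'(t) \in L^d(\R^d)$ for $t := u_{\alpha,w}-f-w \in L^{d/(d-1)}(\R^d)$: Young's identity $\psi(t)+\psi^\ast(\psi'(t)) = t\,\psi'(t)$ together with the convexity bound $0 \ls t\,\psi'(t) \ls \psi(2t) \ls C\,2^{d/(d-1)}|t|^{d/(d-1)}$ shows $\psi^\ast(\psi'(t))$ is integrable, and since the growth assumption on $\psi$ forces the coercivity $\psi^\ast(v) \gs c_{\psi,d}\,|v|^{d}$, this gives $\psi'(t)\in L^d$ (and also that the dual value is genuinely finite at the candidate). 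Consequently $\Phi$ is Gâteaux differentiable at $u_{\alpha,w}$ with $\partial\Phi(u_{\alpha,w}) = \{\tfrac1\alpha\psi'(u_{\alpha,w}-f-w)\}$, so the relation $-v \in \partial\Phi(u_{\alpha,w})$ forces $v = -\tfrac1\alpha\psi'(u_{\alpha,w}-f-w)$, which is \eqref{eq:vawformula}; the second relation $v \in \partial\TV(u_{\alpha,w})$ together with $\partial\TV(u) \subseteq \partial\TV(0)$ (which holds by one-homogeneity: testing $v\in\partial\TV(u)$ with $tu$ and letting $t\to 0,\infty$ yields $\int vu = \TV(u)$, whence $v\in\partial\TV(0)$) completes \eqref{eq:vawformula}. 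Uniqueness of the dual maximizer follows because $\psi^\ast$ is uniformly, hence strictly, convex, so $v \mapsto \tfrac1\alpha\int\psi^\ast(-\alpha v)$ is strictly convex and the concave dual objective admits at most one maximizer; uniqueness of $u_{\alpha,w}$ is already in Proposition \ref{prop:existconv}.

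I expect the main obstacle to be the bookkeeping around the integral functionals: verifying that $\Phi$ is everywhere finite and continuous on $L^{d/(d-1)}(\R^d)$, that its Fenchel conjugate is the integral of $\psi^\ast$ over $L^d(\R^d)$, and that $\psi'(u_{\alpha,w}-f-w)$ indeed lands in $L^d$ — this is exactly where the two-sided information contained in \eqref{eq:psiassum} (the explicit growth bound on $\psi$ and the resulting $d$-coercivity of $\psi^\ast$) is genuinely used. Handling the constraint qualification on the non-reflexive space $\BV(\R^d)$ is circumvented, as in Definition \ref{def:subgrad}, by posing everything in $L^{d/(d-1)}$. Once these points are in place, the remainder is the routine Fenchel--Rockafellar machinery.
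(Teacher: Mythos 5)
Your proposal follows essentially the same route as the paper: Fenchel--Rockafellar duality on $L^{d/(d-1)}(\R^d)$ with the pair $\Phi = \frac1\alpha\int\psi(\cdot - f - w)$ and $\Psi = \TV$, the constraint qualification via continuity of the data term (you argue by local boundedness where the paper cites Ekeland--Temam, both valid), $\TV^\ast = \iota_{\partial\TV(0)}$, and strict convexity of $\psi^\ast$ for uniqueness of the dual maximizer. The one place you go beyond the paper's terse write-up is in the optimality condition, where you explicitly verify via Young's identity and the $d$-coercivity of $\psi^\ast$ that $\psi'(u_{\alpha,w}-f-w)\in L^d(\R^d)$, so that $\partial\Phi(u_{\alpha,w})$ is the singleton $\{\frac1\alpha\psi'(u_{\alpha,w}-f-w)\}$; this is a genuine and correct detail that the paper leaves implicit when it invokes the extremality relations from Ekeland--Temam.
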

\begin{proof}
Existence follows strong duality in Banach spaces \cite[Thm.\ 4.4.3, p.\ 136]{BorZhu05} applied to the space $L^{d/(d-1)}(\R^d)$ with functions $\TV(\cdot)$, $G(\cdot)=\frac{1}{\alpha}\int_{\R^d} \psi(\cdot\,-f-w)$ and the identity operator, while uniqueness is a consequence of strict convexity of $\psi^\ast$.

To apply strong duality we need a qualification condition. Since $G$ is up to a shift and a constant factor the functional $\int \psi$ and $f,w \in L^{d/(d-1)}$, it is enough to check that $u \mapsto \int \psi(u)$ is continuous on $L^{d/(d-1)}$, so that $G$ is in particular continuous at $0$. By \cite[Prop.~IV.1.1]{EkeTem99} continuity holds as soon as we can guarantee that $\psi \circ u \in L^1$ for every $u\in L^{d/(d-1)}$, which is directly implied by the inequality $|\psi(t)|\ls C|t|^{d/(d-1)}$ included in Assumption \eqref{eq:psiassum}. 

The Fenchel conjugate of $G$ reads
\[G^\ast(v) =-\int v\,(f+w) + \frac 1 \alpha \int \psi^\ast(\alpha v).\]
As already computed in \cite[Thm.~1]{IglMerSch18}, the conjugate of the total variation is $\TV^\ast = \rchi_{\partial \TV(0)}$, the indicator function of the convex set $\partial \TV(0)$. In this duality setting, we have \cite[Eqs.~I.(4.24), I.(4.25)]{EkeTem99} the optimality conditions $v_{\alpha,w} \in \partial \TV(u_{\alpha,w})$ and $-v_{\alpha,w} \in \partial G(u_{\alpha,w})$ as well, which are exactly \eqref{eq:vawformula}.
\end{proof}

Now we use assumption \eqref{eq:psiassum} to arrive at a stability result for the maximizers $v_{\alpha,w}$ of \eqref{eq:dualrofpsi}.

\begin{proposition}\label{prop:dualstability}
We have the stability estimate
 \begin{equation}
  \label{eq:dualstability}\|v_{\alpha, w} - v_{\alpha, 0}\|_{L^d(\R^d)} \ls \sigma_\psi \left( \frac{\|w\|_{L^{d/(d-1)}}}{\alpha}\right),
 \end{equation}
where $\sigma_\psi$ is the inverse of the function $t \mapsto m_{\psi^\ast}(t)/t$, with $m_{\psi^\ast}$ the largest modulus of uniform convexity for $\psi^\ast$.
\end{proposition}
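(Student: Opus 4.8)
The plan is to exploit the dual formulation from Proposition~\ref{prop:dualROFpsi} together with the uniform convexity of $\psi^\ast$. By that proposition, $v_{\alpha,w}$ and $v_{\alpha,0}$ are the maximizers over the \emph{same} fixed convex set $\partial\TV(0)$ of the concave functionals $\Phi_w(v) = \int v(f+w) - \frac 1\alpha\int \psi^\ast(-\alpha v)$ and $\Phi_0(v) = \int vf - \frac 1\alpha\int \psi^\ast(-\alpha v)$ respectively, which differ only through the linear term $\int vw$. Both $v_{\alpha,w}, v_{\alpha,0}$ lie in $\partial\TV(0)\subset L^d(\R^d)$, so their difference is in $L^d$ and pairs with $w\in L^{d/(d-1)}$.

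First I would record a quantitative optimality inequality. Since $\tfrac12(v_{\alpha,w}+v_{\alpha,0})\in\partial\TV(0)$, comparing the value of $\Phi_w$ at $v_{\alpha,w}$ with its value at this midpoint, and inserting the midpoint case of Definition~\ref{def:unifconv} for $\psi^\ast$, namely $\psi^\ast(\tfrac{a+b}{2})\ls \tfrac{\psi^\ast(a)+\psi^\ast(b)}{2}-\tfrac14 m_{\psi^\ast}(|a-b|)$ with $a=-\alpha v_{\alpha,w}$, $b=-\alpha v_{\alpha,0}$, and integrating, yields
\[\Phi_w(v_{\alpha,w})-\Phi_w(v_{\alpha,0})\gs \frac{1}{2\alpha}\int_{\R^d} m_{\psi^\ast}\big(\alpha|v_{\alpha,w}-v_{\alpha,0}|\big)\dd x,\]
and the same bound with $\Phi_0$ and the roles of $v_{\alpha,w},v_{\alpha,0}$ exchanged. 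Adding these two inequalities, the $\psi^\ast$-terms and the $f$-terms cancel exactly (because $\Phi_w-\Phi_0$ is the linear functional $v\mapsto\int vw$), leaving
\[\frac1\alpha\int_{\R^d} m_{\psi^\ast}\big(\alpha|v_{\alpha,w}-v_{\alpha,0}|\big)\dd x\ls \int_{\R^d}(v_{\alpha,w}-v_{\alpha,0})\,w\dd x\ls \|w\|_{L^{d/(d-1)}}\,\|v_{\alpha,w}-v_{\alpha,0}\|_{L^d},\]
the last step being Hölder's inequality. It remains to turn the left-hand side into a statement about the single scalar $\|v_{\alpha,w}-v_{\alpha,0}\|_{L^d}$: using that the integral functional $z\mapsto\int\psi^\ast(z)$ inherits, on $L^d(\R^d)$, a uniform convexity governed by $m_{\psi^\ast}$ — with the relevant homogeneity/scaling pinned down by the growth bound $|\psi(s)|\ls C|s|^{d/(d-1)}$ of Assumption~\eqref{eq:psiassum} — this produces an inequality of the form $m_{\psi^\ast}\big(\|v_{\alpha,w}-v_{\alpha,0}\|_{L^d}\big)\ls \|v_{\alpha,w}-v_{\alpha,0}\|_{L^d}\cdot\|w\|_{L^{d/(d-1)}}/\alpha$. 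Dividing by $\|v_{\alpha,w}-v_{\alpha,0}\|_{L^d}$ and inverting the strictly increasing function $t\mapsto m_{\psi^\ast}(t)/t$ (increasing since $m_{\psi^\ast}$ may be chosen convex with $m_{\psi^\ast}(0)=0$) gives \eqref{eq:dualstability}.

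The main obstacle I expect is exactly this last conversion: passing from the integral $\int m_{\psi^\ast}(\alpha|v_{\alpha,w}-v_{\alpha,0}|)$ to $m_{\psi^\ast}$ evaluated at the $L^d$-norm, with the correct power of $\alpha$. Since $\R^d$ has infinite measure, Jensen's inequality is unavailable, so one must use the scaling of $m_{\psi^\ast}$ forced by the growth hypothesis on $\psi$ (equivalently, a sharpened uniform monotonicity of $(\psi^\ast)'$, which exists and is available thanks to the differentiability of $\psi^\ast$ recorded in the Remark after Assumption~\eqref{eq:psiassum}); handling this carefully, rather than through the generic convex-analytic perturbation bound, is what delivers the stated form of the estimate. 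The midpoint argument above needs no differentiability, so the duality pairings and optimality comparisons on $L^{d/(d-1)}$ are otherwise routine.
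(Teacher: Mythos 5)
Your midpoint argument is correct and in fact takes a mildly different route from the paper's. The paper introduces a generalized (Bregman-type) projection $\pi$ onto $K=\partial\TV(0)$, writes the associated variational inequality, and then combines it with the uniform monotonicity of $(\psi^\ast)'$ from \cite[Lem.~1.2]{IglMer20}; you instead evaluate the two dual objectives $\Phi_w$ and $\Phi_0$ at the midpoint of the maximizers and insert the midpoint form of uniform convexity of $\psi^\ast$ directly. This is a legitimate alternative that avoids any appeal to differentiability of $\psi^\ast$, and it correctly reduces the problem, via cancellation of the common terms and H\"older, to
\begin{equation}
\frac{1}{\alpha}\int_{\R^d} m_{\psi^\ast}\bigl(\alpha\,|v_{\alpha,w}-v_{\alpha,0}|\bigr)\dd x \;\ls\; \|v_{\alpha,w}-v_{\alpha,0}\|_{L^d}\,\|w\|_{L^{d/(d-1)}}.
\end{equation}
Up to here your argument is complete and sound.

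The genuine gap is exactly the step you flag. Nothing in the proposal actually converts the integral $\int m_{\psi^\ast}(\alpha|v_{\alpha,w}-v_{\alpha,0}|)$ into a bound involving $m_{\psi^\ast}$ evaluated at the $L^d$-norm. This is not a Jensen-type manipulation nor something that drops out of the growth bound $|\psi(s)|\ls C|s|^{d/(d-1)}$; it is a nontrivial fact that the integral functional $z\mapsto\int\psi^\ast(z)$ is itself uniformly convex (equivalently $(\psi^\ast)'$ is uniformly monotone) on $L^d(\R^d)$ with a modulus controlled by the same $m_{\psi^\ast}$, an $L^p$ phenomenon in the spirit of Clarkson's inequalities. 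In the paper this is precisely what the cited uniform monotonicity inequality \eqref{eq:unifmonoton} from \cite[Lem.~1.2]{IglMer20} supplies --- note that its right-hand side already has $m_{\psi^\ast}$ of a \emph{norm}, not an integral of $m_{\psi^\ast}$ --- and without that ingredient the argument does not close. A secondary but real delicacy is the $\alpha$-bookkeeping: the argument of $m_{\psi^\ast}$ in your derived inequality is $\alpha|v_{\alpha,w}-v_{\alpha,0}|$ with a prefactor $1/\alpha$, so even granting an $L^d$-level modulus one lands on a bound for $m_{\psi^\ast}(\alpha\|v_{\alpha,w}-v_{\alpha,0}\|_{L^d})/(\alpha\|v_{\alpha,w}-v_{\alpha,0}\|_{L^d})$, and unwinding the rescaling by $\alpha$ requires the quantitative property $m_{\psi^\ast}(ct)>c^2 m_{\psi^\ast}(t)$ for $c>1$ that the paper invokes at the very end. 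Your proposal simply asserts the target inequality $m_{\psi^\ast}(\|v_{\alpha,w}-v_{\alpha,0}\|_{L^d})\ls\|v_{\alpha,w}-v_{\alpha,0}\|_{L^d}\|w\|_{L^{d/(d-1)}}/\alpha$ without reconciling these factors, so the last step as written would not be accepted as a proof.
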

\begin{proof}
The computations are analogous to the ones in \cite[Prop.~3.5, Prop.~3.6]{IglMer20}, in turn originating from the methods in \cite{Alb96, AlbNot95}, adapted to the slightly different framework here. The main idea is, for the weak-* closed and convex set $K:=\partial \TV(0) \subset L^d$, to define a generalized projection $\pi:L^{d/(d-1)}\to K$ by
\begin{equation}\label{eq:piK}\pi(u):=\argmin_{v \in K} \int \psi(u) - vu + \psi^\ast(v),\end{equation}
which is single valued by strict convexity of $\psi^\ast$ and then noticing that the dual variable is obtained as
\begin{equation}\label{eq:visproj}v_{\alpha,w} = \frac{1}{\alpha}\pi\left(f+w\right),\end{equation} where we have used that $\psi$ being even implies that $\psi^\ast$ is also even.

Now, given any $u \in L^{d/(d-1)}$ and $v \in L^d$, differentiating the argument of the right hand side of \eqref{eq:piK} in direction $\pi(u)-v$ and using minimality at $\pi(u)$ we end up with
\begin{equation}\label{eq:piKineq}\int \left(v - \pi(u)\right)\left(u - (\psi^\ast)' \circ \pi(u)\right) \gs 0.\end{equation}
Moreover, we have the uniform monotonicity inequality (for a proof, see \cite[Lem.~1.2]{IglMer20})
\begin{equation}\label{eq:unifmonoton}\begin{gathered}\int (\pi(u_1) - \pi(u_2))\big((\psi^\ast)' \circ \pi(u_1) - (\psi^\ast)' \circ \pi(u_2)\big) \\ \gs 2 m_{\psi^\ast}\big(\|\pi(u_1)-\pi(u_2)\|_{L^{d/(d-1)}}\big),\end{gathered}\end{equation}
for whose left hand side we have, using \eqref{eq:piKineq} twice (with $v=\pi(u_1)$ and $v=\pi(u_2)$) and H\"older inequality, that
\begin{equation}\label{eq:csproj}\begin{aligned}
&\int \left(\pi(u_1) - \pi(u_2)\right)\left((\psi^\ast)' \circ \pi(u_1) - (\psi^\ast)' \circ \pi(u_2)\right) \\ 
&\quad\ls \int \left(\pi(u_1) - \pi(u_2)\right)\left(u_1-u_2\right) \\
&\qquad+ \int \left(\pi(u_1) - \pi(u_2)\right)\left((\psi^\ast)' \circ \pi(u_1) - u_1\right) \\
&\qquad- \int \left(\pi(u_1) - \pi(u_2)\right)\left((\psi^\ast)' \circ \pi(u_2) - u_2\right) \\
&\quad\ls \int \left(\pi(u_1) - \pi(u_2)\right)\left(u_1-u_2\right) \\
&\quad\ls \left\|\pi(u_1) - \pi(u_2)\right\|_{L^d} \left\|u_1 - u_2\right\|_{L^{d/(d-1)}}.
\end{aligned}\end{equation}
The combination of \eqref{eq:csproj}, \eqref{eq:unifmonoton} and \eqref{eq:visproj} allows us then to conclude \eqref{eq:dualstability}. As already noted in \cite{IglMer20}, the property (see \cite[Fact 5.3.16]{BorVan10}) $m_{\psi^\ast}(ct)>c^2m_{\psi^\ast}(t)$ for all $c>1$ implies that the function $t \mapsto m_{\psi^\ast}(t)/t$ is strictly increasing, so its inverse is well defined.
\end{proof}

\section{Density estimates for denoised level sets}\label{sec:densityest}
\begin{proposition}\label{prop:densityest}
Let $K \subset \R^d$ be a bounded set, assume that
\begin{equation}\label{eq:isoperstabil2}\|v_{\alpha, w} - v_{\alpha, 0}\|_{L^d(\R^d)} \ls C_0 < \Theta_d,\end{equation}
which is possible by Proposition \ref{prop:dualstability}. Furthermore, assume that for each $\eps >0$ there is $r_{K,\eps} >0$ such that for all $x \in \R^d \setminus K$ and all $\alpha$ we have the equi-integrability estimate
\begin{equation}\label{eq:equiint}\int_{B(x,r_{K,\eps})} |v_{\alpha, 0}|^d \ls \eps^d.\end{equation}
Then the level sets $E^s_{\alpha, w}$ denoting $\{u_{\alpha, w} > s\}$ for $s>0$ and $\{u_{\alpha, w} < s\}$ when $s<0$ satisfy uniform density estimates at some scale $r_K$ and with constant $C_K$ outside $K$, that is
\[\frac{|E^s_{\alpha, w} \cap B(x,r)|}{|B(x,r)|} \gs C_K, \text{ and }\frac{|B(x,r) \setminus E^s_{\alpha, w}|}{|B(x,r)|} \gs C_K\]
for all $x \in \partial E^s_{\alpha, w} \setminus K$ and $0 < r \ls r_K$.
\end{proposition}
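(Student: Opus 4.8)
The plan is to run a De~Giorgi–type density estimate for $E^s_{\alpha,w}$, the uniformity in $\alpha$ and $w$ coming entirely from the two hypotheses. Since we want both the inner and the outer estimate, we may assume $s>0$: the case $s<0$ follows by replacing $E^s_{\alpha,w}$ by its complement, which by Remark~\ref{rem:signs} only changes the sign of the relevant curvature and hence leaves $|v_{\alpha,w}|$ unchanged. Now, since $v_{\alpha,w}\in\partial\TV(u_{\alpha,w})$ by Proposition~\ref{prop:dualROFpsi}, the coarea/layer-cake argument recalled in Section~\ref{sec:curvatures} shows that for a.e.~$s>0$ the level set $E:=E^s_{\alpha,w}$ minimizes, among sets of finite mass, the functional $E'\mapsto\per(E')-\int_{E'}v_{\alpha,w}$. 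Fix $x\in\partial E\setminus K$ and set $m(r):=|E\cap B(x,r)|$ and $\tilde m(r):=|B(x,r)\setminus E|$; since $x\in\partial E=\supp D1_E$, both are strictly positive for every $r>0$, are Lipschitz in $r$, and satisfy $m'(r)=\mathcal H^{d-1}(E^{(1)}\cap\partial B(x,r))$ and $\tilde m'(r)=\mathcal H^{d-1}((\R^d\setminus E)^{(1)}\cap\partial B(x,r))$ for a.e.~$r$.

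For the inner estimate, comparing $E$ with the competitor $E\setminus B(x,r)$ and using, for a.e.~$r$, the decomposition $\per(E)-\per(E\setminus B(x,r))=\per(E;B(x,r))-m'(r)$, we get
\[\per\big(E;B(x,r)\big)-m'(r)\ \ls\ \int_{E\cap B(x,r)}v_{\alpha,w}\ \ls\ \int_{E\cap B(x,r)}|v_{\alpha,w}|.\]
Choose $\eps>0$ with $C_0+\eps<\Theta_d$. For $x\notin K$ and $r\ls r_{K,\eps}$, the equi-integrability estimate \eqref{eq:equiint} together with \eqref{eq:isoperstabil2} yields $\|v_{\alpha,w}\|_{L^d(B(x,r))}\ls\|v_{\alpha,0}\|_{L^d(B(x,r))}+\|v_{\alpha,w}-v_{\alpha,0}\|_{L^d(\R^d)}\ls\eps+C_0$, whence by H\"older $\int_{E\cap B(x,r)}|v_{\alpha,w}|\ls(\eps+C_0)\,m(r)^{(d-1)/d}$. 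Combining this with the elementary bound $\per(E\cap B(x,r))\ls\per(E;B(x,r))+m'(r)$ and, on the range of $r$ where $m(r)\ls\frac12|B(x,r)|$, the isoperimetric inequality \eqref{eq:isoper} in the form $\per(E\cap B(x,r))\gs\Theta_d\,m(r)^{(d-1)/d}$, we obtain the differential inequality
\[\big(\Theta_d-\eps-C_0\big)\,m(r)^{(d-1)/d}\ \ls\ 2\,m'(r),\qquad\text{i.e.}\qquad \tfrac{\dd}{\dd r}\,m(r)^{1/d}\ \gs\ c,\]
with $c:=(\Theta_d-\eps-C_0)/(2d)>0$. Since $m(0^+)=0$, integration gives $m(r)\gs c^d r^d$, which is the inner density estimate at scale $r_K:=r_{K,\eps}$ with constant $C_K:=c^d/|B(0,1)|$ on the regime $m(r)\ls\frac12|B(x,r)|$; when instead $m(r)>\frac12|B(x,r)|$ the inner estimate holds trivially with constant $\tfrac12\gs C_K$, and the two regimes are patched by the standard continuation argument. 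The outer estimate is obtained identically by comparing $E$ with $E\cup B(x,r)$: this produces $\per(E;B(x,r))-\tilde m'(r)\ls\int_{B(x,r)\setminus E}|v_{\alpha,w}|$, and hence the same differential inequality for $\tilde m$, using that $\tilde m(0^+)=0$ as well since $x\in\partial E$.

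The main obstacle is precisely what the two assumptions are designed to overcome: keeping the coefficient of $m(r)^{(d-1)/d}$ strictly below the isoperimetric constant $\Theta_d$ \emph{uniformly} in $\alpha$ and $w$ on a \emph{fixed} scale. Without \eqref{eq:equiint} the $L^d$ mass of the noiseless certificate could concentrate on smaller and smaller balls as $\alpha\to0$, destroying the uniform scale, and without the stability \eqref{eq:isoperstabil2} (supplied by Proposition~\ref{prop:dualstability}) the perturbation $v_{\alpha,w}-v_{\alpha,0}$ due to noise would be uncontrolled. Everything else—the decomposition of the perimeter under intersection and union with a ball for a.e.~radius, the passage from the a.e.~differential inequality to the pointwise lower bound, and the patching of the two density regimes—is the classical De~Giorgi argument (cf.~\cite{Mag12}).
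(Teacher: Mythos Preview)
Your argument is correct and follows essentially the same De~Giorgi scheme as the paper: the paper quotes the inequality
\[
\per(E\cap B(x,r))-\int_{E\cap B(x,r)}v_{\alpha,w}\ \ls\ 2\,\per\big(B(x,r);E^{(1)}\big)
\]
from \cite[Lem.~8]{IglMerSch18}, whereas you rederive precisely this bound by comparing $E$ with $E\setminus B(x,r)$ and combining $\per(E;B(x,r))-m'(r)\ls\int_{E\cap B(x,r)}v_{\alpha,w}$ with $\per(E\cap B(x,r))=\per(E;B(x,r))+m'(r)$; the subsequent use of H\"older, \eqref{eq:isoperstabil2}, \eqref{eq:equiint}, the isoperimetric inequality and integration of the differential inequality are identical. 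One minor simplification: since $E\cap B(x,r)$ has finite measure in $\R^d$, the global isoperimetric inequality \eqref{eq:isoper} already gives $\per(E\cap B(x,r))\gs\Theta_d\,m(r)^{(d-1)/d}$ unconditionally, so the split into the regimes $m(r)\lessgtr\tfrac12|B(x,r)|$ and the patching step are unnecessary.
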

\begin{proof}
Let $x \in \partial E^s_{\alpha, w} \setminus K$. We start from the formula
\begin{equation}\label{eq:perimcut}
 \per(E^s_{\alpha, w} \cap B(x,r)) - \int_{E^s_{\alpha, w} \cap B(x,r)} v_{\alpha,w} \ls 2 \per\left(B(x,r) ; (E^s_{\alpha, w})^{(1)}\right),
\end{equation}
which holds for almost every $r>0$ \cite[Lem.~8]{IglMerSch18} by repeated application of the precise formulas for the perimeter of an intersection \cite[Thm.~16.3]{Mag12}.

On the other hand we have, thanks to the H\"older inequality, the condition \eqref{eq:isoperstabil2} and local equiintegrability \eqref{eq:equiint} that for $0 < r \ls r_{K,\eps}$
\begin{align*}
 \int_{E^s_{\alpha, w} \cap B(x,r)} v_{\alpha,w} & \ls |E^s_{\alpha, w} \cap B(x,r)|^{(d-1)/d} \Vert v_{\alpha,w} - v_{\alpha,0} \Vert_{L^d(\R^d)} + \int_{E^s_{\alpha, w} \cap B(x,r)} |v_{\alpha,0}| \\
 & \ls |E^s_{\alpha, w} \cap B(x,r)|^{(d-1)/d} \left(C_0 + \eps\right) 
\end{align*}
Plugging this in \eqref{eq:perimcut}, we obtain (the superscript $(1)$ denoting density-$1$ points)
\[ \per(E_{\alpha,w}^s \cap B(x,r)) - |E_{\alpha,w}^s \cap B(x,r)|^{(d-1)/d} (C_0+\eps) \ls 2 \per\left(B(x,r) ; (E^s_{\alpha, w})^{(1)}\right).\]
Using now the isoperimetric inequality, we get
\begin{equation}\label{eq:diffineq}|E_{\alpha,w}^s \cap B(x,r)|^{(d-1)/d} \left( \Theta_d-C_0-\eps \right) \ls 2 \per\left(B(x,r) ; (E^s_{\alpha, w})^{(1)}\right).\end{equation}
Taking some fixed $\eps_0 < \Theta_d - C_0$, and since for a.e. $r>0$
\[\per\left(B(x,r) ; (E^s_{\alpha, w})^{(1)}\right) = \mathcal{H}^{d-1}\left( \partial B(x,r) \cap (E^s_{\alpha, w})^{(1)}\right) = \frac{d}{dt}\bigg\vert_{t=r}\left|E^s_{\alpha, w}\cap B(x,t)\right|,\]
we can derive the inner density estimate $|E^s_{\alpha, w} \cap B(x,r)| \gs C_K |B(x,r)|$ with $C_K$ depending on $\eps_0$ by integrating the differential inequality \eqref{eq:diffineq} up to $r_K:=r_{K,\eps_0}$. 

The outer density estimate follows analogously by considering the complement $\R^d \setminus E^s_{\alpha, w}$, which admits the variational mean curvature $-v_{\alpha,w}$.
\end{proof}

\bibliographystyle{plain}
\bibliography{conv_without_source_cond}

\end{document}